\DeclareMathOperator{\JointSpec}{JointSpec}
\newtheorem{theorem}{Theorem}[section]
\newtheorem{prop}[theorem]{Proposition}
\newtheorem{lemma}[theorem]{Lemma}
\newtheorem{cor}[theorem]{Corollary}
\newcommand{\R}{{\mathbb R}}
\newcommand{\C}{{\mathbb C}}
\newcommand{\Z}{{\mathbb Z}}
\newcommand{\N}{{\mathbb N}}
\newcommand{\T}{{\mathbb T}}
\newcommand{\norm}[1]{\| #1\|}
\newcommand{\phy}{\varphi}
\newcommand{\op}[1]{\!\!\mathop{\rm ~#1}\nolimits}
\newcommand{\DD}{\mathrm{d}}
\newcommand{\scriptop}[1]{\!\!\mathop{\mbox{\rm \scriptsize ~#1}}\nolimits}
\newcommand{\ii}{\mathrm{i}}
\newcommand{\la}{\lambda} 
\newcommand{\al}{\alpha} 
\newcommand{\Om}{\Omega} 
\newcommand{\ga}{\gamma}
\newcommand{\om}{\omega} 
\newcommand{\Si}{\Sigma}
\newcommand{\De}{\Delta}
\newcommand{\si}{\sigma}
\newcommand{\hb}{\hbar}
\newcommand{\Hilbert}{{\mathcal{H}}}  
\newcommand{\Toeplitz}{{\mathcal{T}}}
\newcommand{\contravariant}{{\operatorname{cont}}}
\newenvironment{remark}{\refstepcounter{theorem}\par\medskip\noindent{\bf
Remark~\thetheorem~~}}{\unskip\nobreak\hfill\hbox{ $\oslash$}\par\bigskip}
\newcommand{\got}[1]{\mathfrak{#1}}
\newcommand{\pscal}[2]{\langle #1,#2\rangle}
\newcommand{\abs}[1]{\left|#1\right|}
\newcommand{\Cinf}{{\rm C}^{\infty}}
\newcommand{\RM}{\mathbb{R}}
\newcommand{\ZM}{\mathbb{Z}}
\newcommand{\NM}{\mathbb{N}}
\newcommand{\CM}{\mathbb{C}}
\renewcommand{\O}{\mathcal{O}}
\renewcommand{\geq}{\geqslant}
\renewcommand{\leq}{\leqslant}
\newcommand{\Hilb}{{\mathcal{H}}}
\begin{document}

\title{{\bf Isospectrality \\for quantum toric integrable systems}}

\author{Laurent Charles, {\'A}lvaro Pelayo, and San V\~{u} Ng\d{o}c}

\date{}

\maketitle

\begin{abstract}
We settle affirmatively the isospectral problem for quantum toric integrable systems: the
  semiclassical joint spectrum of such a system, given by a
  sequence of commuting Toeplitz operators on a sequence of Hilbert
  spaces, determines the classical integrable system given by the
  symplectic manifold and Poisson commuting functions, up to
  symplectomorphisms. We also give a full description of the semiclassical spectral theory of
quantum toric integrable systems.    This type of problem belongs to the
  realm of classical questions in spectral theory  going back
  to pioneer works of Colin de Verdi{\`e}re, Guillemin, Sternberg and others
  in the 1970s and 1980s.  
\end{abstract}

\section{Introduction} \label{sec:intro}
  
This paper gives a full description of the semiclassical spectral theory of
\emph{quantum toric integrable systems} in any finite dimension.  The classical limit
corresponding to quantum toric integrable systems are the so called symplectic
toric manifolds or \emph{toric systems}.  Such a system consists of a
compact symplectic $2n$\--manifold equipped with $n$ commuting
Hamiltonians $f_1,\, \ldots,\, f_n$ with periodic flows.  
The paper combines geometric techniques from the theory of toric manifolds, in
the complex-algebraic and symplectic settings, with recently developed
microlocal analytic methods for Toeplitz operators.

As a consequence of the spectral theory we develop, we answer 
the \emph{isospectrality} question for quantum toric integrable systems,
in any finite dimension: the semiclassical joint spectrum of a quantum toric
integrable system, given by a sequence of commuting Toeplitz operators
acting on quantum Hilbert spaces, determines the classical system
given by the symplectic manifold and Poisson commuting functions, up
to symplectic isomorphisms.  This type of symplectic isospectral problem belongs to the
 realm of classical questions in inverse spectral theory and microlocal analysis, 
 going back  to pioneer works of Colin de Verdi{\`e}re 
 \cite{CdV, CdV2}  and Guillemin\--Sternberg \cite{GuSt}  in the 1970s and 1980s.   
 
The question of isospectrality  in Riemannian geometry may be traced back to Weyl \cite{We1911,We1912} and
is most well known thanks to Kac's article \cite{Ka66}, who himself 
attributes the question to Bochner. Kac popularized 
the sentence: \emph{``can one hear the shape of a drum?"}, to refer to
this type of isospectral problem. The spectral theory developed in 
this paper exemplifies a striking difference with Riemannian
geometry, where this type of isospectrality rarely holds true, and suggests that
\emph{symplectic} invariants are much better encoded in spectral theory than 
\emph{Riemannian} invariants.  An approach to this problem for general
integrable systems is suggested in the last two authors' article \cite{PeVN2013}.
We refer to Section \ref{sec:remarks} for further remarks, and references, 
in these directions.

\subsection*{Microlocal analysis of integrable systems}

The notion of a quantum integrable system, as a maximal set of
commuting quantum observables, dates back to the early quantum
mechanics, to the works of Bohr, Sommerfeld and
Einstein~\cite{E1917}. However, the most basic results in the
symplectic theory of classical integrable systems like Darboux's
theorem or action-angle variables could not be used in
Schr{\"o}dinger's quantum setting at that time because they make use of the
analysis of differential (or pseudodifferential) operators in phase
space, known now as microlocal analysis, which was developed only in
the 1960s. The microlocal analysis of action-angle variables starts with the
works of Duistermaat~\cite{Du1974} and Colin de Verdi{\`e}re
\cite{CdV,CdV2}, followed by the semiclassical theory by Charbonnel
\cite{[5]}, and more recently by V\~{u} Ng\d{o}c \cite{san-spectral},
Zelditch and Toth \cite{[48],[49],[50]}, Charbonnel and Popov
\cite{ChPo1999}, Melin\--Sj{\"o}strand \cite{MeSj2003}, and many
others.
 
Effective models in quantum mechanics often require a compact phase
space, and thus cannot be treated using pseudodifferential
calculus. For instance the natural classical limit of a quantum spin
is a symplectic sphere.  The study of quantum action-angle variables
in the case of compact symplectic manifolds treated in this paper was
started by Charles \cite{Ch2003a}, using the theory of Toeplitz
operators.  In the present paper, we present global spectral results
for toric integrable systems; we use in a fundamental way Delzant's
theorem on symplectic toric manifolds \cite{d}.

Toric integrable systems \footnote{Toric integrable systems always
  have singularities of elliptic and transversally elliptic type, but
  do not have singularities of hyperbolic or focus-focus type. The
  local and semiglobal theory for regular points, elliptic and
  transversally elliptic singularities is now well understood both at
  the symplectic level (action-angle theorem of Liouville-Arnold-
  Mineur, Eliasson's linearization theorems), as well as the quantum
  level, see Charles \cite{Ch2003a} and V\~{u} Ng\d{o}c
  \cite{san-mono}.  This gives the foundation of the modern theory of
  integrable systems, in the spirit of Duistermaat's article
  \cite{Du1980}, but also of KAM-type perturbation theorems (see de la
  Llave's survey article \cite{llave}).}  have played an
influential role in symplectic and complex algebraic geometry,
representation theory, and spectral theory since T. Delzant classified
them in terms of combinatorial information (actually, in terms of a
convex polytope, see Theorem \ref{theo:delzant}).  A comparative study
of symplectic toric manifolds from the symplectic and complex
algebraic view points was given by Duistermaat and
Pelayo~\cite{DuPe2009}. A beautiful treatment of the classical theory
of toric systems is given in Guillemin's book \cite{Gu1994}.

Toeplitz operators are a natural generalization of Toeplitz matrices
(which correspond to Toeplitz operators on the unit disk). On $\RM^n$,
Toeplitz operators correspond to pseudo\-differential operators via
the Bargmann transform. Of course, such a correspondence cannot hold
in the case of a compact phase space, but it turns out that Toeplitz
operators always give rise to a semiclassical algebra of operators
with a symbolic calculus and microlocalization properties, which is
microlocally isomorphic to the algebra of pseudodifferential
operators. See the book by Boutet de Monvel and Guillemin for an
introduction to the spectral theory of Toeplitz operators
\cite{BoGu1981}.

\vskip 1em

\subsection*{Joint Spectrum}

In order to state our results, let us introduce the required
terminology. If $(M,\omega)$ is a symplectic manifold, a smooth map
$\mu=(\mu_1,\dots,\mu_n):M\to\RM^n$ is called a \emph{momentum map} for a
Hamiltonian $n$-torus action if the Hamiltonian flows $t_j\mapsto
\phy_{\mu_j}^{t_j}$ are periodic of period 1, and pairwise commute~:
$\phy_{\mu_j}^{t_j}\circ \phy_{\mu_i}^{t_i} = \phy_{\mu_i}^{t_i} \circ
\phy_{\mu_j}^{t_j}$, so that they define an action of $\R^n/\Z^n$. If
this action is effective and $M$ is compact.  $2n$\--dimensional and
connected, we call $(M, \om , \mu)$ a {\em symplectic toric manifold}.

By the Atiyah and Guillemin-Sternberg theorem, for any torus
Hamiltonian action on a connected compact manifold, the image of the
momentum map is a rational convex polytope~\cite{At1982,GuSt1982}. For
a symplectic toric manifold, the momentum polytope $\Delta \subset
\R^n $ has the additional property that for each vertex $v$ of
$\Delta$, the primitive normal vectors to the facets meeting at $v$
form a basis of the integral lattice $\Z^n$. We call such a polytope a
{\em Delzant polytope}.

A now standard procedure introduced by B. Kostant \cite{Ko1965, Ko1970,Ko1974,Ko1975} 
and  J.M. Souriau \cite{So1966, So1970} to quantize a symplectic compact manifold 
$(M,\om)$ is to introduce a prequantum bundle $\mathcal{L} \rightarrow M$,
that is a Hermitian line bundle with curvature $\frac{1}{i} \om$ and a
complex structure $j$ compatible with $\om$. One then defines the
quantum space as the space
$\mathcal{H}_k:=\mathrm{H}^0(M,\mathcal{L}^k)$ of holomorphic sections
of $\mathcal{L}^k$. The parameter $k$ is a positive integer, the
semi-classical limit corresponds to the large $k$ limit. A description of this
procedure, which is called \emph{geometric quantization}, is given by
Kostant and Pelayo in \cite{KoPe2012} from the angle of Lie theory and
representation theory.

Not all symplectic manifold have a complex structure or a prequantum
bundle. However a symplectic toric manifold always admits a compatible
complex structure, which is not unique. Furthermore a symplectic toric
manifold $M$ with momentum map $\mu:M\to\RM^n$ is prequantizable if
and only if there exists $c\in\RM^n$ such that the vertices of the
polytope $\mu(M)+c$ belong to $2 \pi \Z^n$ (see Section
\ref{sec:prequantization}). If it is the case, the prequantum bundle
is unique up to isomorphism.

In many papers, a prequantum bundle is defined as a line bundle with
curvature $\frac{1}{2 \pi i } \om$.  With this normalization, the
cohomology class of $\om$ is integral and the prequantization
condition for toric manifolds is that, up to translation, the momentum
polytope has integral vertices. This normalization may look simpler
than ours, which includes a $2\pi$\--factor. Nevertheless, our choice
is justified by the Weyl law. Indeed, with our normalization, the
dimension of the quantum space ${\mathcal{H}_k}$ is $$\Big(\frac{k}{2 \pi
}\Big)^n \operatorname{vol} ( M, \om) + \mathcal{O}(k^{n-1}).$$

Associated to such a quantization there is an algebra $\mathscr{T}
(M,{\mathcal{L}} , j)$ of operators
$$T=(T_k:\mathcal{H}_k\to\mathcal{H}_k)_{k\in \mathbb{N}^*}$$ called
\emph{Toeplitz operators}. This algebra plays the same role as the algebra of
semiclassical pseudodifferential operators for a cotangent phase
space. Here the semiclassical parameter is $\hbar=1/k$. A Toeplitz
operator has a \emph{principal symbol}, which is a smooth function on
the phase space $M$. If $T$ and $S$ are Toeplitz operators, then
$(T_k+k^{-1}S_k)_{k\in\NM^*}$ is a Toeplitz operator with the same
principal symbol as $T$.  If $T_k$ is Hermitian (i.e. self-adjoint)
for $k$ sufficiently large, then the principal symbol of $T$ is
real-valued.  Two Toeplitz operators $(T_k)_{k \in \N^*}$ and
$(S_k)_{k \in \N^*}$ \emph{commute} it $T_k$ and $S_k$ commute for
every $k$.

 \begin{figure}[h]
   \centering
   \includegraphics[height=2.5cm]{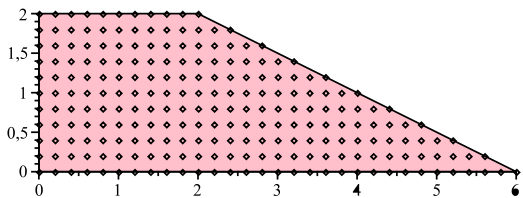}
   \caption{``Model Image" of the spectrum of a normalized quantum
   toric integrable system. }
   \label{fig:regular}
 \end{figure}

We shall also need the following definitions. If $P_1,\ldots,P_n$ are
mutually commuting endomorphism of a finite dimensional vector space,
then the \emph{joint spectrum of} $P_1,\ldots,P_n$ is the set of
$(\lambda_1,\dots,\lambda_n)\in\CM^n$ such that there exists a
non-zero vector $v$ for which $P_j v = \lambda_j v$, for all
$j=1,\dots,n$. It is denoted by $\op{JointSpec}(P_1,\, \ldots, \,
P_n)$. The \emph{Hausdorff distance} between two subsets $A$ and $B$
of $\R^n$ is
 $$
 {\rm d}_H(A,\,B):= \inf\{\epsilon > 0\,\, | \,\ A \subseteq
 B_\epsilon \ \mbox{and}\ B \subseteq A_\epsilon\},
$$
where for any subset $X$ of $\R^n$, the set $X_{\epsilon}$ is
$X_\epsilon := \bigcup_{x \in X} \{m \in \R^n\, \, | \,\, \|x - m \|
\leq \epsilon\}$.  If $(A_k)_{k \in \N^*}$ and $(B_k)_{k \in \N^*}$
are sequences of subsets of $\mathbb{R}^n$, we say that $$A_k = B_k +
\mathcal{O}(k^{-\infty}) $$ if ${\rm
  d}_H(A_k,\,B_k)=\mathcal{O}(k^{-N})$ for all $N \in \N^*$.

Our main result describes in full the joint spectrum of a quantum
toric integrable system.

\begin{theorem}[Joint Spectral Theorem] \label{theo:spectral} Let $(M,
  \, \omega, \, \mu : M \rightarrow \R^n)$ be a symplectic toric
  manifold equipped with a prequantum bundle ${\mathcal{L}}$ and a
  compatible complex structure $j$.  Let $T_1,\dots, T_n$ be commuting
  Toeplitz operators of $\mathscr{T} (M,{\mathcal{L}} , j)$ whose
  principal symbols are the components of $\mu$. Then the joint
  spectrum of $T_1,\, \ldots, \, T_n$ satisfies
  $$
  \textup{JointSpec}(T_1,\dots, T_n)= g \bigl( \Delta\cap \biggl( v +
  \frac{2 \pi}{k} \mathbb{Z}^n \biggr) ;\, k \bigr) +
  \mathcal{O}(k^{-\infty})
  $$
  where $\Delta=\mu(M)$, $v$ is any vertex of $\Delta$ and
  $g(\cdot;k):\RM^n\to\RM^n$ admits a $\Cinf$-asymptotic expansion of
  the form
  \[
  g(\cdot;k) = \textup{Id}+k^{-1}g_1+k^{-2}g_2+\cdots
  \]
  where each $g_j:\RM^n\to\RM^n$ is smooth.
\end{theorem}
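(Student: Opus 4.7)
\begin{skproof}
The strategy is to introduce a canonical model quantum toric system associated to the polytope $\Delta=\mu(M)$, compute its joint spectrum exactly, and then identify the given system with the model up to a smooth semiclassical reparametrisation.

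\textbf{Model and its spectrum.} By Delzant's theorem, $(M,\om,\mu)$ is equivariantly symplectomorphic to the K\"ahler reduction of some $\CM^d$ by a subtorus prescribed by $\Delta$, and this realisation comes with a canonical prequantum bundle ${\mathcal{L}}$ and compatible complex structure. The quantum space ${\mathcal{H}}_k=H^0(M,{\mathcal{L}}^k)$ carries an induced $T^n$-action, and the infinitesimal generators of this action, shifted by $v$ and rescaled by $2\pi/k$, give commuting Toeplitz operators $T_1^{\mathrm{mod}},\dots,T_n^{\mathrm{mod}}$ with principal symbols $\mu_j$. The weight decomposition of ${\mathcal{H}}_k$ is multiplicity free and indexed exactly by $\Delta\cap(v+\frac{2\pi}{k}\ZM^n)$, so the model operators are simultaneously diagonal and
$$
\JointSpec(T_1^{\mathrm{mod}},\dots,T_n^{\mathrm{mod}})=\Delta\cap\Bigl(v+\tfrac{2\pi}{k}\ZM^n\Bigr),
$$
modulo a universal $\mathcal{O}(k^{-1})$ subprincipal shift which will be absorbed into $g_1$.

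\textbf{Comparison with the general case.} Given arbitrary commuting $T_1,\dots,T_n\in\mathscr{T}(M,{\mathcal{L}},j)$ with principal symbols $\mu_j$, the goal is to construct smooth maps $g_1,g_2,\dots:\RM^n\to\RM^n$ such that
$$
T_j=\bigl(\mathrm{Id}+k^{-1}g_1+k^{-2}g_2+\cdots\bigr)_j(T_1^{\mathrm{mod}},\dots,T_n^{\mathrm{mod}})+\oh,
$$
using the semiclassical functional calculus for commuting Toeplitz operators. Proceed by induction on the order in $k^{-1}$. Once matched up to order $N-1$, the residue $R^{(N)}_j$ is a Toeplitz operator of order $k^{-N}$ whose principal symbol $r_j$ Poisson commutes with every $\mu_i$; because $\mu$ is a proper toric momentum map, $r_j$ factors as $r_j=G_j\circ\mu$ for a smooth function $G_j$ on $\Delta$, which after Whitney extension provides the next coefficient $g_N$ on all of $\RM^n$. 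A Borel summation turns the formal expansion into a genuine smooth family $g(\cdot;k)$, and applying it to the diagonal spectrum of the model operators yields the stated formula.

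\textbf{Main obstacle.} The delicate step is the rigidity claim underlying the induction: that any Toeplitz operator commuting (modulo $\oh$) with all the $T_j^{\mathrm{mod}}$ has a principal symbol factoring smoothly through $\mu$ up to the boundary of $\Delta$, and that the analogous cohomological equations at each order in $k^{-1}$ are solvable with smooth solutions globally on $M$. Action--angle microlocal normal forms for Toeplitz operators (Charles) handle the open stratum $\mu^{-1}(\Delta^\circ)$, but the preimages of faces of $\Delta$ of positive codimension must be treated with nested transversally elliptic normal forms (Charles, V\~{u} Ng\d{o}c). Patching these strata-by-strata constructions into a single smooth $g(\cdot;k)$ on $\RM^n$, using in a crucial way that the singularities of a toric system are purely of elliptic / transversally elliptic type, is where the bulk of the semiclassical analytic work lies.
\end{skproof}
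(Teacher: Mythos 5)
The central gap is in your induction step, and it is not repairable as stated. From $[T_i,T_j]=0$ and $T_j=g^{(N-1)}_j(T^{\mathrm{mod}})+k^{-N}R_j$ you can only extract the symmetric compatibility relations $\{\mu_i,r_j\}=\{\mu_j,r_i\}$ for the principal symbols $r_j$ of the residues; nothing forces the given operators to commute with the model operators, so the claim that each $r_j$ Poisson-commutes with every $\mu_i$ does not follow. Consequently the identity you aim for, $T_j=g_j(T_1^{\mathrm{mod}},\dots,T_n^{\mathrm{mod}})+\mathcal{O}(k^{-\infty})$, is false in general: already for $n=1$ take $T=T^{\mathrm{mod}}+k^{-1}S$ with $S$ self-adjoint whose principal symbol is not invariant under the circle action; then $[T,T^{\mathrm{mod}}]$ has exact order $k^{-2}$, while any $g(T^{\mathrm{mod}};k)$ commutes with $T^{\mathrm{mod}}$, so no $g$ can work — yet the spectral conclusion still holds for this $T$. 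The missing idea is conjugation. The paper's normal form is $U(T_1,\dots,T_n)U^{-1}=g(T_1^{\Delta},\dots,T_n^{\Delta};k)+\mathcal{O}(k^{-\infty})$: first one quantizes the Delzant symplectomorphism as an invertible (unitary in the self-adjoint case) operator $U^{(0)}:\mathcal{H}_k\to\mathcal{H}_k^{\Delta}$ — this is also needed because your $T_j$ and the model operators act on different Hilbert spaces (the given complex structure $j$ need not be the reduced one), so your displayed identity does not even typecheck — and then, at each order, one conjugates by $\mathrm{Id}+\mathrm{i}k^{-N}A_N$ (or its exponential), where the symbol $a_N$ solves the cohomological equation $\{\mu_j^{\Delta},a\}=r_j-g_j\circ\mu_\Delta$. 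Only the averaged parts $M_jr_j=\int_0^1 r_j\circ\phy_j^t\,\mathrm{d}t$ are invariant (this is exactly where the compatibility relations enter), and these are what factor through $\mu_\Delta$ and define $g_N$; the non-invariant part of $r_j$ is removed by the conjugation rather than absorbed into $g$. A Borel summation of the $a_N$ and of the $g_N$ then gives the single operator $U$ and map $g(\cdot;k)$, and the passage from the resulting norm estimate to the Hausdorff statement on joint spectra is the elementary variational stability of joint spectra of commuting self-adjoint matrices, as you indicate.

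Concerning what you single out as the main analytic obstacle: the smooth factorization of invariant symbols through $\mu_\Delta$ up to the faces of $\Delta$ does require care, but in the paper it is handled by a short, purely symbolic argument, not by patching transversally elliptic quantum normal forms stratum by stratum. In Delzant coordinates near a critical value one applies, one elliptic variable at a time, an Eliasson-type division lemma (a smooth function of $(x,\xi,e)$ Poisson-commuting with $q=x^2+\xi^2$ is a smooth function of $(q,e)$, the parameter $e$ being allowed to range over a closed half-space), which yields smoothness of the factor on all of $\mu_\Delta(M_\Delta)$ and hence a smooth extension to $\RM^n$. So that part of your plan is both correct in spirit and simpler than you anticipate; the part that genuinely fails is asserting invariance of the residue symbols and expressing the operators themselves, rather than suitable conjugates of them, as functions of the model operators.
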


Thus the joint spectrum of a quantum toric system can obtained by
taking the $k^{-1}\ZM^n$ lattice points in a polytope $\Delta$ (as in
Figure~\ref{fig:regular}), and applying a small smooth deformation $g$
(as in Figure~\ref{fig:deformed}).

\subsection*{Isospectrality}

We present next the isospectral theorem for toric systems. An easy
consequence of the previous theorem is that the momentum polytope
$\Delta$ is the Hausdorff limit of the joint spectrum of the quantum
system, that is $\Delta$ consists of the $\la \in \R^n$ such that for
any neighborhood $U$ of $\la$, $U \cap
\operatorname{JointSpec}(T_{1,k},\dots, T_{n,k}) \neq \emptyset$ when
$k$ is sufficiently large.

 \begin{figure}[h]
   \centering
   \includegraphics[height=6.2cm]{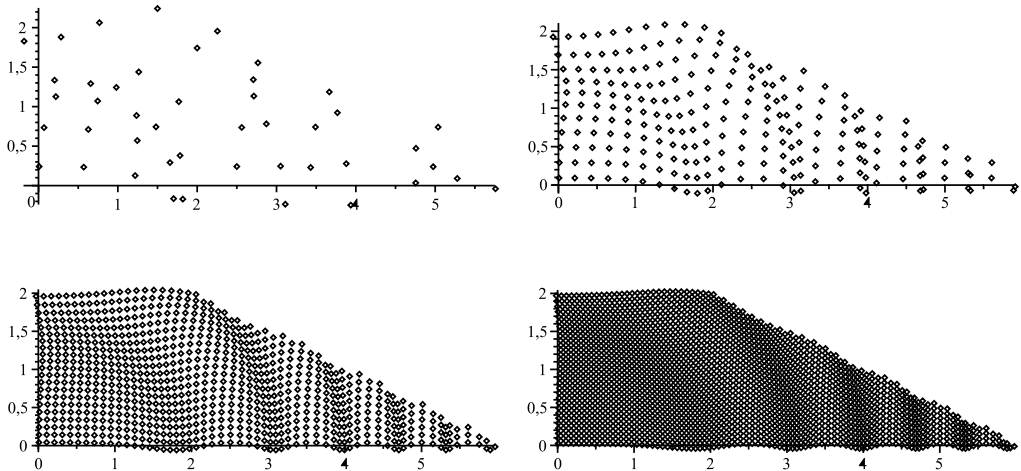}
   \caption{Sequence of images of the spectra of a quantum toric
   integrable systems as the semiclassical parameter $\hbar$ goes to
   $0$.  The spectra lie on a plane, so they corresponds to a
   four-dimensional integrable system with two degrees of freedom. In
   the Hausdorff limit corresponding to $\hbar=0$, the spectra
   converges to a polytope; this is proved for a general quantum
   toric system in any dimension by Theorem
   \ref{theo:inverse-spectral}. Therefore, one can recover the
   classical system from the semiclassical spectrum (i.e. the
   spectrum of the family of Toeplitz operators as $\hbar$ approaches
   $0$).}
   \label{fig:deformed}
 \end{figure}

Recall that two symplectic toric manifolds $(M, \om ,\mu)$ and $( M',
\om' , \mu')$ are \emph{isomorphic} if there exists a symplectomorphism
$\varphi: M \rightarrow M'$ such that $$\varphi^* \mu' = \mu.$$ By the
Delzant classification theorem \cite{d}, a symplectic toric manifold
is determined up to isomorphism by its momentum polytope. Furthemore,
for any Delzant polytope $\Delta$, Delzant constructed in \cite{d} a
symplectic toric manifold $(M_{\Delta} , \om_{\Delta} , \mu_{\Delta}
)$ with momentum polytope $\Delta$. Now we are ready to state our
isospectral theorem (see Figure \ref{fig:deformed} for an illustration 
of the semiclassical joint spectrum).

\begin{cor}[Isospectral Theorem] \label{theo:inverse-spectral}
  Let $(M, \, \omega, \, \mu : M \rightarrow \R^n)$ be a symplectic
  toric manifold equipped with a prequantum bundle ${\mathcal{L}}$ and
  a compatible complex structure $j$.  Let $T_1,\dots, T_n$ be
  commuting Toeplitz operators of $\mathscr{T} (M,{\mathcal{L}} , j)$
  whose principal symbols are the components of $\mu$. Then
$$ \De : = \lim_{k \rightarrow \infty}  \textup{JointSpec}(T_1,\dots, T_n) $$
is a Delzant polytope and $(M, \om, \mu)$ is isomorphic with
$(M_{\Delta}, \om_{\Delta}, \mu_{\Delta})$. In other words, one can
recover the classical system from the limit of the joint spectrum.
\end{cor}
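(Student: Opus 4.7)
The proof will be essentially a direct consequence of the Joint Spectral Theorem (Theorem~\ref{theo:spectral}) combined with Delzant's classification theorem. The strategy has three stages: first, extract the momentum polytope $\Delta$ as the Hausdorff limit of the joint spectra; second, note that this limit is automatically a Delzant polytope because $(M,\omega,\mu)$ is a symplectic toric manifold; third, invoke Delzant's theorem to reconstruct $(M,\omega,\mu)$ up to isomorphism.

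For the first stage, I would fix a vertex $v$ of $\Delta$ and analyze the set
\[
S_k := g\bigl(\Delta\cap(v+\tfrac{2\pi}{k}\mathbb{Z}^n);\,k\bigr).
\]
The plan is to show ${\rm d}_H(S_k,\Delta)\to 0$ as $k\to\infty$. On the one hand, since the spacing of the lattice $v+\tfrac{2\pi}{k}\mathbb{Z}^n$ tends to $0$, every point of $\Delta$ is within $O(k^{-1})$ of some element of $\Delta\cap(v+\tfrac{2\pi}{k}\mathbb{Z}^n)$ (here one uses that $\Delta$ is a full-dimensional convex polytope, so one can find lattice points near boundary points by first moving slightly into the interior). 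On the other hand, every point of $\Delta\cap(v+\tfrac{2\pi}{k}\mathbb{Z}^n)$ lies in $\Delta$ by definition. Since $g(\cdot;k)=\textup{Id}+O(k^{-1})$ uniformly on the compact set $\Delta$, applying $g$ perturbs both inclusions by $O(k^{-1})$, and the $\mathcal{O}(k^{-\infty})$ Hausdorff error from Theorem~\ref{theo:spectral} is absorbed. This gives
\[
\lim_{k\to\infty}\textup{JointSpec}(T_1,\dots,T_n)=\Delta
\]
in the Hausdorff sense, proving in particular that the limit exists and coincides with $\mu(M)$.

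For the second stage, since $(M,\omega,\mu)$ is by assumption a symplectic toric manifold, the image $\Delta=\mu(M)$ is a Delzant polytope by the Atiyah--Guillemin--Sternberg convexity theorem together with the Delzant condition on vertex normals recalled in the introduction. Hence the Hausdorff limit obtained in the first stage is indeed a Delzant polytope. Finally, by Delzant's classification theorem \cite{d}, the symplectic toric manifold $(M,\omega,\mu)$ is determined up to equivariant symplectomorphism by its momentum polytope, and Delzant's explicit construction produces the model $(M_\Delta,\omega_\Delta,\mu_\Delta)$ with this polytope. Therefore $(M,\omega,\mu)\simeq(M_\Delta,\omega_\Delta,\mu_\Delta)$, which completes the proof.

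The only nontrivial step is the Hausdorff convergence in the first stage, and even this is routine once one notes the two directions separately: density of the rescaled lattice in $\Delta$, and the $O(k^{-1})$ uniform closeness of $g(\cdot;k)$ to the identity. There is no substantive obstacle, as all the analytic content has already been packaged into Theorem~\ref{theo:spectral}.
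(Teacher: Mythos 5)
Your proposal is correct and follows essentially the same route as the paper: reduce to Theorem~\ref{theo:spectral}, show that $g\bigl(\Delta\cap(v+\tfrac{2\pi}{k}\mathbb{Z}^n);k\bigr)$ converges to $\Delta$ using the density of the rescaled lattice and the uniform estimate $g(\cdot;k)=\textup{Id}+\mathcal{O}(k^{-1})$ on a compact set containing $\Delta$, and then invoke Delzant's classification. The only cosmetic difference is that you phrase the convergence in terms of the Hausdorff distance throughout, whereas the paper argues via inclusions up to $\mathrm{B}(0,Ck^{-1})$ and its set-theoretic notion of limit; the content is the same.
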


This type of inverse conjecture is classical and belongs to the realm
of questions in inverse spectral theory, going back to similar
questions raised (and in many cases answered) by pioneer works of
Colin Verdi{\`e}re and Guillemin\--Sternberg in the 1970s and
1980s. Many other contributions followed their works, for instance
Datchev--Hezari--Ventura~\cite{DaHeVe} and
Iantchenko--Sj{\"o}strand--Zworski~\cite{IaSjZw2002}.  A few global
spectral results have also been obtained recently, for instance by
V\~{u} Ng\d{o}c~\cite{san-inverse} for one degree of freedom
pseudodifferential operators, or in the article by Dryden, Guillemin,
and Sena-Dias~\cite{dgs} in which an equivariant spectrum of the
Laplace operator is considered, and the references therein. See Section \ref{sec:remarks}
for further references.

\subsection*{Metaplectic correction} 
 
Introducing a metaplectic correction refers to twisting the prequantum bundle (or its powers) by a half-form bundle. The metaplectic correction allows to obtain an easier control of the subprincipal terms in the semiclassical limit. In the following theorem we improve the previous Joint spectral Theorem by giving the explicit description of the spectrum up to a $\mathcal{O}(k^{-2})$.

Recall that a half-form bundle of a complex manifold is a square root of its canonical bundle. Given a symplectic manifold $(M, \om)$ with a compatible complex structure, a prequantum bundle ${\mathcal{L}}$ and a half-form bundle $\delta$, the associated quantum space is $\Hilb_{\op{m},k} = {\rm H}^0 (M, {\mathcal{L}}^k \otimes \delta)$. We can define Toeplitz operators in this setting together with their principal symbols.  To state our result, we also need the notion of subprincipal symbol of a Toeplitz operator whose definition is recalled in Section \ref{sec:glob-quant-norm}. Two Toeplitz operators with the same principal symbol are equal up to $\mathcal{O}(k^{-2})$ if and only if they have the same subprincipal symbols.

As we will see in section \ref{sec:half-form}, a symplectic toric manifold with moment polytope $\Delta \subset \R^n$ has a half-form bundle if and only if there exists a vector $u \in \Z^n$ such that for any one-codimensional face $f$ of $\Delta$, the scalar product of a primitive normal of $f$ with $u$ is odd. Such a vector, if it exists, is uniquely determined modulo $(2 \Z)^n$. We denote it by $u_{\Delta}$.

\begin{theorem}[Joint Spectral Theorem with Metaplectic Correction] \label{theo:metapl-corr}
Let $(M,
  \, \omega, \, \mu : M \rightarrow \R^n)$ be a symplectic toric
  manifold equipped with a prequantum bundle ${\mathcal{L}}$, a
  compatible complex structure $j$ and a half-form bundle $\delta$.  Let $T_1,\dots, T_n$ be commuting
  Toeplitz operators of $\Hilb_{\op{m},k} $ whose
  principal symbols are the components of $\mu$. Then the joint
  spectrum of $T_1,\, \ldots, \, T_n$ satisfies
  $$
  \textup{JointSpec}(T_1,\dots, T_n)= g \bigl( \Delta\cap \biggl( v +
  \frac{2 \pi}{k} \bigl( \mathbb{Z}^n + u_{\Delta}/2 \bigr)  \biggr) ;\, k \bigr) +
  \mathcal{O}(k^{-\infty})
  $$
  where $\Delta=\mu(M)$, $v$ is any vertex of $\Delta$ and
  $g(\cdot;k):\RM^n\to\RM^n$ admits a $\Cinf$-asymptotic expansion of
  the form $$g(\cdot;k) = \textup{Id}+k^{-1}g_1+k^{-2}g_2+\cdots$$
  where each $g_j:\RM^n\to\RM^n$ is smooth. Furthermore $g_1$ is determined by 
$$ g_1^i ( E) = \int_0^1 f^i_1 (\varphi^t_{\mu^i} ( x))  \, {\rm d}t , \qquad \textup{for all}\, \, E \in \Delta,\,  x \in \mu^{-1}(E) $$ 
where $i=1, \ldots, n$, $f^i_1$ is the subprincipal symbol of $T^i$ and $\varphi^t_{\mu^i}$ is the Hamiltonian flow of $\mu^i$. 
\end{theorem}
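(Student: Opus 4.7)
\begin{skproof}
The plan is to adapt the proof of Theorem~\ref{theo:spectral} to the metaplectic setting and to strengthen the normal-form analysis so as to pin down $g_1$ explicitly. I begin with the model. Using Delzant's theorem, the components $\mu^i$ generate an effective Hamiltonian $T^n$-action that lifts to $\mathcal{L}^k\otimes\delta$ and hence to $\Hilb_{\op{m},k}$, which decomposes into one-dimensional weight spaces. At each fixed point $p_v$ corresponding to a vertex $v$ of $\Delta$, the tangent-space weights are the primitive inward normals $u_1^v,\ldots,u_n^v$ of the facets meeting at $v$, so the induced weight on $\delta|_{p_v}$ is $-\tfrac{1}{2}\sum_i u^v_i$ (half the weight of the canonical bundle). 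Consistency of this half-weight across all vertices is exactly the existence condition for $u_\Delta$, and equivariant index theory on the toric variety identifies the set of weights appearing in $\Hilb_{\op{m},k}$, after the natural $2\pi/k$ rescaling, with $\Delta \cap \bigl(v + \tfrac{2\pi}{k}(\mathbb{Z}^n + u_\Delta/2)\bigr)$. Let $\mu^i_{\mathrm{mod}}$ denote the Toeplitz operator acting by the $i$-th component of this weight on each weight space: these commute, have principal symbols $\mu^i$, vanishing subprincipal symbols (a key benefit of the metaplectic correction), and joint spectrum literally equal to the lattice in the statement.

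Second, following the strategy of Theorem~\ref{theo:spectral}, I would construct unitary Toeplitz operators $U_k$ on $\Hilb_{\op{m},k}$ and a semiclassical symbol $F=\mathrm{Id}+k^{-1}F_1+k^{-2}F_2+\cdots$ valued in $\mathbb{R}^n$ such that
\[
U_k^* T^i U_k = F^i(\mu^1_{\mathrm{mod}},\ldots,\mu^n_{\mathrm{mod}}) + \mathcal{O}(k^{-\infty}).
\]
The pair $(U_k, F)$ is built order by order: at each step one factors $U_k$ through an exponential $\exp(i A_{k,\ell})$ with $A_{k,\ell}$ a Toeplitz operator of appropriate semiclassical order, and solves a cohomological equation of the form $\mathcal{L}_{\ham{\mu^i}} a_\ell = r^i_\ell$ on $M$; the remainders admit a torus-invariant obstruction $F^i_\ell$ (a function of $\mu$) thanks to commutativity $[T^i,T^j]=0$ and periodicity of the classical flows $\varphi^t_{\mu^i}$. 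Functional calculus for commuting self-adjoint operators then yields $\operatorname{JointSpec}(T^1,\ldots,T^n) = F\bigl(\operatorname{JointSpec}(\mu^1_{\mathrm{mod}},\ldots,\mu^n_{\mathrm{mod}})\bigr) + \mathcal{O}(k^{-\infty})$, which combined with the model computation gives the theorem with $g=F$.

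For $g_1 = F_1$, the first factor of $U_k$ has the form $\exp(i A_k)$ for a self-adjoint Toeplitz operator $A_k$ with principal symbol $a_0$; Toeplitz symbolic calculus then yields the subprincipal symbol of $U_k^* T^i U_k$ as $f^i_1 + \{a_0, \mu^i\}$. Since $\mu^i_{\mathrm{mod}}$ has vanishing subprincipal symbol, the subprincipal symbol of $F^i(\mu^1_{\mathrm{mod}},\ldots,\mu^n_{\mathrm{mod}})$ equals $F^i_1(\mu)$, so at this order the requirement is $F^i_1(\mu(x)) = f^i_1(x) + \{a_0, \mu^i\}(x)$. Averaging along the periodic flow $\varphi^t_{\mu^i}$ annihilates the coboundary term and produces $F^i_1(\mu(x)) = \int_0^1 f^i_1(\varphi^t_{\mu^i}(x))\,\mathrm{d}t$. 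That the right-hand side depends only on $\mu(x)$, and that the $n$ equations for $a_0$ are simultaneously compatible, follows from the identity $\{\mu^i,f^j_1\}=\{\mu^j,f^i_1\}$---the $k^{-2}$-order consequence of $[T^i,T^j]=0$---together with $\varphi^1_{\mu^i}=\mathrm{Id}$. The main obstacle is the global construction of $U_k$ to all orders of $k^{-1}$: although the cohomological equations are solvable locally and the order-by-order obstructions are torus-invariant, patching the corrections into a single unitary Toeplitz family on $\Hilb_{\op{m},k}$ with $\mathcal{O}(k^{-\infty})$ control requires a careful resummation exploiting the $T^n$-equivariance of the quantization and Delzant's classification.
\end{skproof}
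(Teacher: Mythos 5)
There is a genuine gap at the very start of your argument: you let the Hamiltonian $T^n$-action act on $\Hilb_{\op{m},k}$ itself, decomposing it into one-dimensional weight spaces and defining the model operators $\mu^i_{\mathrm{mod}}$ on that same space. But the hypotheses only give a \emph{compatible} complex structure $j$ and \emph{some} half-form bundle $\delta$; neither is assumed torus-invariant or equivariant, so the action need not preserve $H^0(M,\mathcal{L}^k\otimes\delta)$ at all, and your weight-space decomposition and fixed-point/equivariant-index identification of the spectrum are only available on the Delzant model quantization $\Hilb^{\Delta}_{\op{m},k}={\rm H}^0(M_\Delta,\mathcal{L}^k_\Delta\otimes\delta_\Delta)$, where invariance holds by construction (this is the content of Proposition~\ref{prop:existence-half-form} and Theorem~\ref{theo:ttm}, which the paper proves by quantum reduction from the Bargmann space on $\C^F$ rather than by index theory --- your route to the lattice $\Delta\cap(v+\tfrac{2\pi}{k}(\Z^n+u_\Delta/2))$ is a legitimate alternative for the model, modulo vanishing of higher cohomology). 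To pass from $\Hilb_{\op{m},k}$ to the model one must first conjugate by a unitary Fourier-integral-type operator quantizing the Delzant symplectomorphism $\varphi:M\to M_\Delta$ (Step 1 of the proof of Theorem~\ref{theo:normal-form}), and this is precisely where your formula for $g_1$ is at risk: such a conjugation preserves principal symbols but could a priori alter subprincipal symbols, adding an extra term to $g_1$ beyond the flow average of $f^i_1$. The paper's key input, which your sketch omits entirely, is that \emph{because of the metaplectic correction} this conjugation can be chosen to preserve both principal and subprincipal symbols (Theorem 5.1 of \cite{Ch2007}); only then does the first cohomological equation read $\{\mu_i,a\}=f^i_1-g^i_1(\mu_\Delta)$ and averaging along the periodic flow give $g^i_1(E)=\int_0^1 f^i_1(\varphi^t_{\mu^i}(x))\,{\rm d}t$ with no correction term.

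Two further points. First, you assert that the order-by-order obstructions are ``a function of $\mu$'', but the theorem claims a $\Cinf$ expansion with each $g_j$ smooth on $\RM^n$; smoothness at the critical values of $\mu$ (the facets and corners of $\Delta$) is not automatic from invariance and requires the Eliasson-type division argument (Lemma~\ref{lemma:smooth} and Lemma~\ref{lemma:g}), which also underlies the compatibility of the $n$ equations --- the identity $\{\mu_i,r_j\}=\{\mu_j,r_i\}$ is used as in Lemma~\ref{lemma:M}, but by itself it does not give smoothness of the averaged obstruction as a function on $\RM^n$. Second, the ``main obstacle'' you identify (resumming the corrections into one unitary with $\mathcal{O}(k^{-\infty})$ control) is in fact the routine part: it is a Borel summation as in Step 4 of the proof of Theorem~\ref{theo:normal-form}; and the final passage from the operator conjugation to the statement about joint spectra is not abstract functional calculus but the elementary variational comparison of Lemma~\ref{lemm:variational2}. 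So the architecture of your proposal matches the paper, but the metaplectic FIO conjugation preserving subprincipal symbols and the global smoothness lemma are missing, and the first of these is exactly what the explicit formula for $g_1$ rests on.
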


Besides the average of the subprincipal symbols, it is interesting to note the shift by $ u_{\Delta}/2$ so that no eigenvalue lies on the boudary of $g ( \Delta)$ when $k$ is sufficiently large, cf. figure \ref{fig:deformed_metaplectic} for the spectrum of a model toric system with metaplectic correction.

\subsection*{Toeplitz quantization}

A natural question is how to decide whether a given integrable system
can be quantized. A discussion of this problem may be found in Garay
Van Straten~\cite{GaVa2010} and the references therein (they work with
pseudodifferential operators, instead of Toeplitz operators).
Concretely, given a prequantizable symplectic manifold endowed with an 
integrable system $(f_1,\dots,f_n)$, it may not be possible
to find a set of \emph{commuting} Toeplitz operators whose principal
symbols are $f_1,\dots,f_n$, respectively.  However, in the case toric
systems, we will obtain, as a byproduct of the proof of
Theorem~\ref{theo:spectral}, the following existence result.

\begin{theorem}[Existence of Toeplitz Quantization] 
  Let $(M, \, \omega, \, \mu \colon M \to \mathbb{R}^n )$ be a symplectic toric manifold
  equipped with a prequantum bundle ${\mathcal{L}}$ and a compatible
  complex structure $j$. Then there exists mutually commuting Toeplitz
  operators $T_1,\dots, T_n$ in $\mathscr{T} (M,{\mathcal{L}} , j)$
  whose principal symbols are the components of $\mu$.
\end{theorem}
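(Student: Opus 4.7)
The plan is to construct the $T_j$ by directly quantizing the $\mathbb{T}^n$-action generated by $\mu$. Since $\mu_1,\dots,\mu_n$ are periodic Hamiltonians generating an effective Hamiltonian $\mathbb{T}^n$-action on $(M,\omega)$ and $\mathcal{L}\to M$ is a prequantum bundle, the integrality condition on the vertices of $\mu(M)$ (recalled in Section~\ref{sec:prequantization}) is precisely what is needed for this torus action on $M$ to lift to a connection- and metric-preserving action on $\mathcal{L}$, by Kostant's prequantization of a Hamiltonian torus action. Differentiating the induced action on smooth sections of $\mathcal{L}^k$ yields the Kostant--Souriau prequantum operators $P_j = -i\hbar \nabla_{X_{\mu_j}}+\mu_j$, and these commute exactly because $\mathbb{T}^n$ is abelian.

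Next I would pass to a $\mathbb{T}^n$-invariant compatible complex structure $j_0$. Such a $j_0$ can be obtained by pulling back the canonical K\"ahler structure of the Delzant model $M_\Delta$ via the equivariant symplectomorphism $M\cong M_\Delta$ supplied by Delzant's classification theorem, or equivalently by averaging the given $j$ over $\mathbb{T}^n$ inside the contractible space of $\omega$-compatible complex structures. Because the flow of each $\mu_j$ preserves $j_0$, the subspace $\mathcal{H}_k^{(0)}:=H^0(M,\mathcal{L}^k,j_0)$ is stable under every $P_j$. A computation in Bargmann--Fock coordinates near a fixed point of the torus action (where the toric structure linearizes to the standard diagonal action on $\mathbb{C}^n$) identifies each restriction $T_j^{(0)}:=P_j|_{\mathcal{H}_k^{(0)}}$ as a Toeplitz operator in $\mathscr{T}(M,\mathcal{L},j_0)$ with principal symbol $\mu_j$, and since the $P_j$ commute on all smooth sections, so do their restrictions. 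This settles the theorem in the case of an invariant complex structure.

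To transfer the construction to the given $j$, I would join $j_0$ to $j$ by a smooth path of $\omega$-compatible complex structures $(j_t)_{t\in[0,1]}$. The Bergman projectors $\Pi_k^{j_t}$ vary smoothly in $t$, and a polar decomposition of the composition $\Pi_k^{j}\Pi_k^{j_0}\colon \mathcal{H}_k^{(0)}\to\mathcal{H}_k^{j}$, taken along short enough segments of the path and then concatenated, produces an honest unitary operator $U_k$ whose conjugation action sends $\mathscr{T}(M,\mathcal{L},j_0)$ onto $\mathscr{T}(M,\mathcal{L},j)$ while preserving principal symbols. This transport property of Berezin--Toeplitz quantization rests on the smooth dependence of the Szeg\H{o}/Bergman kernel on the compatible complex structure and is a standard feature of the calculus developed in \cite{Ch2003a, BoGu1981}. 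Setting $T_j := U_k T_j^{(0)} U_k^{*}$ then yields commuting Toeplitz operators in $\mathscr{T}(M,\mathcal{L},j)$ whose principal symbols are the components of $\mu$.

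The main obstacle is precisely this last transport step: one must produce a \emph{genuine} unitary intertwiner between $\mathcal{H}_k^{(0)}$ and $\mathcal{H}_k^{j}$ (not merely an asymptotic BKS-type one, which would give commutativity only up to $\mathcal{O}(k^{-\infty})$) and verify that conjugation by it preserves both the Toeplitz class and the principal symbol map. Granting this transport, the existence of commuting Toeplitz quantizations follows essentially formally from the Kostant lifting of the torus action, and hence is obtained as a byproduct of the spectral analysis carried out in the proof of Theorem~\ref{theo:spectral}.
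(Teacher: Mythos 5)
Your proposal is correct in substance and is, at bottom, the paper's own argument: the theorem is obtained there as a byproduct of Corollary~\ref{cor:quantum-model} (the rescaled Kostant--Souriau operators on the Delzant model commute exactly, because they come from the lifted torus action of Proposition~\ref{prop:prequantization}) together with Step~1 of the proof of Theorem~\ref{theo:normal-form}, where the Delzant symplectomorphism $\phy\colon M\to M_\Delta$ is lifted to the prequantum bundles and quantized as a genuine unitary $U_k$ (for $k$ large) whose conjugation preserves the Toeplitz class and composes principal symbols with $\phy^{-1}$; conjugating the model operators back gives exactly commuting Toeplitz operators in $\mathscr{T}(M,\mathcal{L},j)$ with symbols the $\mu_j$. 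Where you differ is only in how the intertwiner is produced: you pull the model K\"ahler structure back to an invariant $j_0$ on $M$ and then pass from $j_0$ to $j$ via polar parts of $\Pi_k^{j}\Pi_k^{j_0}$ along a path of compatible complex structures, whereas the paper uses a single quantized symplectomorphism (the Fourier-integral-operator analogues of \cite[Ch.~4]{Ch2003a}, cf.\ \cite{Zelditch1997}), which handles the change of manifold and of complex structure in one stroke; the ``main obstacle'' you flag --- needing a genuine, symbol-preserving unitary rather than an asymptotic one, so that commutativity is preserved on the nose --- is exactly what that machinery supplies, and for the finitely many small $k$ where invertibility may fail one can define the operators ad hoc, since the Toeplitz condition only constrains the large-$k$ behaviour. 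Two small cautions: averaging $j$ over $\mathbb{T}^n$ yields an invariant compatible \emph{almost} complex structure, not obviously an integrable one, so the Delzant pullback (your first option) is the construction to use; and the clean justification that the restricted Kostant--Souriau operators are Toeplitz with principal symbol $\mu_j$ (and subprincipal symbol $-\tfrac{1}{2}\Delta\mu_j$) is Tuynman's trick \cite{Tu1987}, as recalled in Section~\ref{sec:glob-quant-norm}, rather than a Bargmann--Fock computation near a fixed point, which by itself is only local information.
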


The proofs in the paper combine geometric ideas from the theory of
toric manifolds in the complex and symplectic settings with microlocal
analytic methods dealing with semi-classical Toeplitz operators that
were developed by the first author \cite{Ch2003a, Ch2003b, Ch2006b,
  Ch2007}.

\section{Model for a symplectic toric
  manifold} \label{sec:model-sympl-toric}

We review the ingredients from the theory of symplectic toric
manifolds which we need for this paper, namely the Delzant
construction.

\begin{figure}[htbp]
  \begin{center}
    \includegraphics[width=6cm]{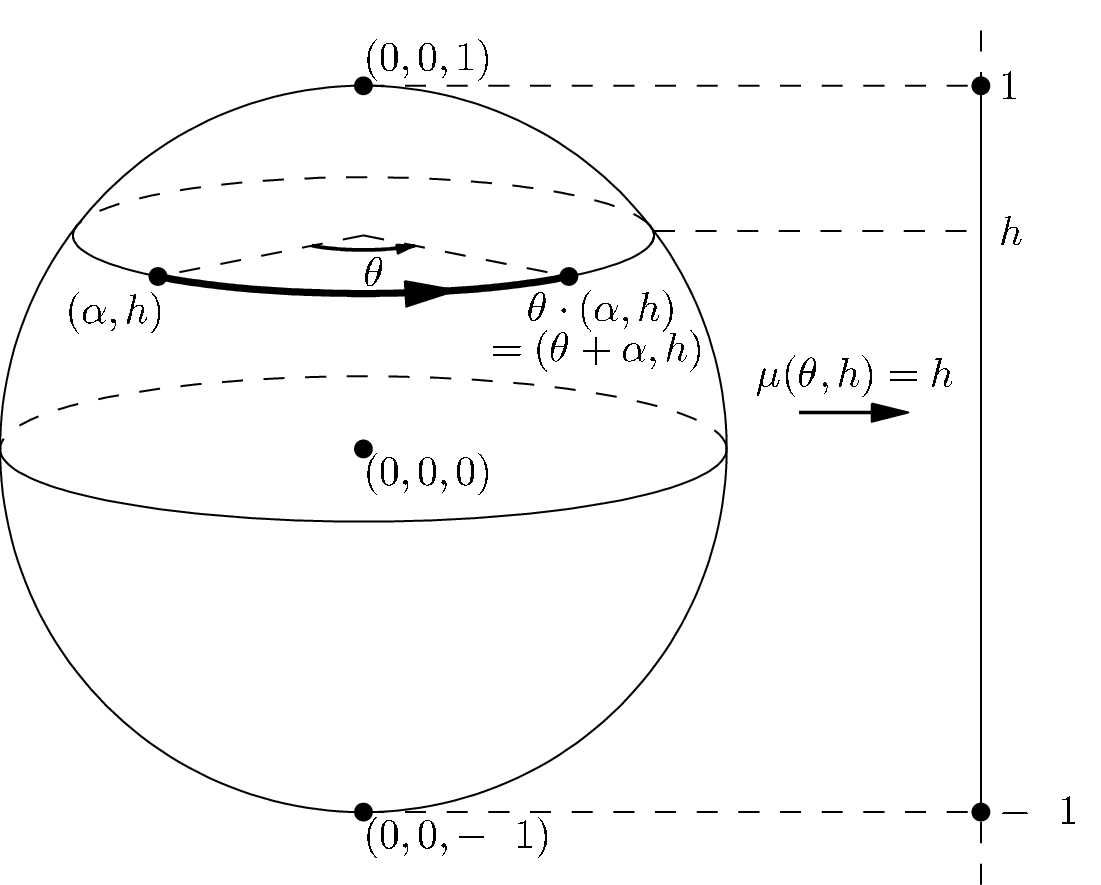}
    \caption{The momentum map for the $2$\--sphere $S^2$ is the height
      function $\mu(\theta,\, h)=h$. The image of $S^2$ under the
      momentum map $\mu$ is the closed interval $\Delta:=[-1,\,1]$. Note that
      as predicted by the Atiyah\--Guillemin\--Sternberg Theorem, 
      the interval $[-1,\,1]$ is equal to the image
      under $\mu$ of the set $\{(0,\,0,\,-1),\, (0,\, 0,\, 1)\}$ of
      fixed points of the Hamiltonian $S^1$\--action on $S^2$ by
      rotations about the vertical axis.}
    \label{fig1}
  \end{center}
\end{figure}

Let $T$ be an $n$\--dimensional torus. Denote by ${\got t}$ the Lie
algebra of $T$ and by ${\got t }_ \Z$ the kernel of the exponential
map $\op{exp}:{\got t}\to T$. We denote the isomorphism ${\got
  t}/{\got t}_{\Z} \to T$ also by $\op{exp}$. A \emph{symplectic toric manifold} 
  $(M, \om, T, \mu : M \rightarrow
{\got{t}}^*)$ is a symplectic compact connected manifold $(M, \om)$ of
dimension $2n$ with an effective Hamiltonian action of $T$ with
momentum $\mu$. When $T = \R^n / \Z^n$ so that $\got{t} \simeq \R^n$,
we recover the definition given in the introduction. Two symplectic
toric manifolds $(M, \, \omega, \, T, \, \mu )$ and $(M', \, \omega',\
T, \, \mu')$ are \emph{isomorphic} if there exists a symplectomorphism
$\varphi \colon M \to M'$ such that $\mu'\circ \varphi=\mu$. If it is
case, $\varphi$ intertwines the torus actions.

We present the construction of Delzant \cite{d} of symplectic toric
manifolds as reduced phase spaces.

\subsubsection*{Step 1 (\emph{Starting from a Delzant polytope $\Delta
    \subset \mathfrak{t}^*$})}

Let $\Delta$ be an $n$\--dimensional convex polytope in the dual Lie
algebra ${\got t}^*$.  We denote by $F$ and $V$ the set of all
codimension one faces and vertices of $\Delta$, respectively.  Every
face of $\Delta$ is compact. For every $v\in V$, we write $ F_v=\{
f\in F\mid v\in f\} .  $ The polytope $\Delta$ is called a {\em
  Delzant polytope} if it has the following properties, see Guillemin
\cite[p. 8]{Gu1994}.
\begin{itemize}
\item[i)] For each $f\in F$ there exists $X_f\in {\got t}_{\Z}$ and
  $\lambda _f\in\R$ such that the hyperplane which contains $f$ is
  equal to the set of all $\xi\in {\got t}^*$ such that $\langle
  X_f,\,\xi\rangle +\lambda _f=0$, and $\Delta$ is contained in the
  set of all $\xi\in {\got t}^*$ such that $\langle X_f,\,\xi\rangle
  +\lambda _f\geq 0$.

  \emph{Note}: The vector $X_f$ and constant $\lambda _f$ are made
  unique by requiring that they are not an integral multiple of
  another such vector and constant, respectively.
\item[ii)] For every vertex $v\in V$, the vectors $X_f$ with $f\in
  F_v$ form a $\Z$\--basis of the integral lattice ${\got t}_{\Z}$ in
  ${\got t}$.  \end{itemize} It follows that
$$
\Delta =\{\xi\in {\got t}^*\mid \langle X_f,\,\xi\rangle +\lambda
_f\geq 0 \quad\mbox{\rm for every}\quad f\in F\} .
$$
Also, $\# (F_v)=n$ for every $v\in V$.

\subsubsection*{Step 2 (\emph{The epimorphism  $\R ^F/\Z ^F \to T$ and the subtorus $N$})} 
Let $\pi \colon \R ^F \to {\got t}$ be defined by
$$\pi (t):=\sum_{f\in F}\, t_f\, X_f, \qquad t\in\R ^F.$$
Because, for any vertex $v$, the $X_f$ with $f\in F_v$ form a
$\Z$\--basis of ${\got t}_{\Z}$, we have $\pi (\Z ^F)={\got t}_{\Z}$
and $\pi (\R ^F)={\got t}$.  It follows that $\pi$ induces an
epimorphism $$\pi ' \colon \R ^F/\Z ^F=(\R /\Z)^F \to {\got t}/{\got
  t}_{\Z},$$ and we have the corresponding epimorphism
$\op{exp}\circ\pi ' \colon \R ^F/\Z ^F \to T$.

Write ${\got n}:=\op{ker}\pi$, a linear subspace of $\R ^F$,
and $$N:=\op{ker}(\op{exp}\circ\pi ') \subseteq \R ^F/\Z ^F,$$ a
compact commutative subgroup of the torus $\R ^F/\Z ^F$.  One can
check that $N$ is connected, and therefore isomorphic to ${\got
  n}/{\got n}_{\Z}$, where ${\got n}_{\Z}:={\got n}\cap\Z ^F$ is the
integral lattice in ${\got n}$ of the torus $N$.

\subsubsection*{Step 3 (\emph{Action of $N$ on $\mathbb{C}^F$})}

On the complex vector space $\C ^F$, we have the action of the torus
$\R ^F/\Z ^F$, where $t\in\R ^F/\Z ^F$ maps $z\in\C ^F$ to the element
$t\cdot z\in\C ^F$ defined by
$$
(t\cdot z)_f=\op{e}^{2\pi\scriptop{i}\, t_f}\, z_f,\quad f\in F.
$$
This action is Hamiltonian with momentum $\mu : \C^F \to (\R
^F)^*\simeq\R ^F$ given by
\begin{equation}
  \mu (z)_f=|z_f|^2/2 - \la_f = ( {x_f}^2 + {y_f}^2)/2 -\la_f , \quad f\in F.  
  \label{muf}
\end{equation}
Here $z_f=x_f+\op{i}y_f$, with $x_f,\, y_f\in\R$. Furthermore we work
with the symplectic form
\begin{gather} \label{eq:symp_form_C^F} \omega :=(\ii/4\pi)\,
  \sum_{f\in F}\,\op{d}\! z_f \wedge\op{d}\!\overline{z}_f =(1/2\pi
  )\,\sum_{f\in F}\,\op{d}\! x_f\wedge\op{d}\! y_f.
\end{gather}
The factor $1/2\pi$ is introduced in order to avoid an integral
lattice $(2\pi\,\Z )^F$ instead of our $\Z ^F$.

Hence $N$ acts on $\C ^F$ Hamiltonianly and the corresponding momentum
mapping is $ \mu _N:=\iota _{\got n}^*\circ\mu :\C ^F\to {\got n}^*, $
where $\iota _{\got n}:{\got n}\to\R ^F$ denotes the identity viewed
as a linear mapping from ${\got n}\subset\R ^F$ to $\R ^F$, and its
transposed $\iota _{\got n}^*:(\R ^F)^*\to {\got n}^*$ is the map
which assigns to each linear form on $\R ^F$ its restriction to ${\got
  n}$.

\subsubsection*{ Step 4 (\emph{The symplectic toric manifold
    $(M_{\Delta},\, \omega_{\Delta}, T, \mu_{\Delta})$}) }

It follows from Guillemin \cite[Theorems~1.6 and 1.4]{Gu1994} that $0$
is a regular value of $\mu _N$, hence the zero level set $Z$ of
$\mu_N$ is a smooth submanifold of $\C ^F$, and that the action of $N$
on $Z$ is proper and free.  The $N$\--orbit space $M_{\Delta}:=Z/N$ is
a smooth $2n$\--dimensional manifold such that the projection $p:Z\to
M_{\Delta}$ exhibits $Z$ as a principal $N$\--bundle over
$M_{\Delta}$.  Moreover, there is a unique symplectic form $\omega
_{\Delta}$ on $M_{\Delta}$ such that $p^*\omega _{\Delta}={\iota
  _Z}^*\omega$, where $\iota _Z$ is the identity viewed as a smooth
mapping from $Z$ to $\C ^F$.
 
On $(M_{\Delta}, \omega_{\Delta})$, the torus $(\R ^F/\Z ^F)/N\simeq
T$ acts effectively and Hamiltonianly, with momentum mapping $\mu
_\Delta:M_{\Delta} \to {\got t}^*$ determined by $\pi ^*\circ\mu
_\Delta\circ p =\mu |_Z$, and\footnote{See Guillemin \cite[Theorem
  1.7]{Gu1994}.}  $\mu _\Delta(M_{\Delta})=\Delta$.  In other words,
$(M_{\Delta}, \omega_{\Delta}, T, \mu_{\Delta})$ is a symplectic toric
manifold with momentum map image equal to $\Delta$.

\begin{theorem}[Delzant's Theorem] \label{theo:delzant} Any abstract
  symplectic toric manifold $(M,\, \omega,\, T, \, \mu)$ with momentum
  polytope $\Delta \subset \mathfrak{t}^*$ is isomorphic to
  $(M_{\Delta},\, \omega_{\Delta}, T,\, \mu_{\Delta})$. Moreover, two
  symplectic toric manifolds $(M_{\Delta}, \,\omega_{\Delta}, \, T, \,
  \mu_{\Delta})$ and $(M_{\Delta'}, \,\omega_{\Delta'},\, T,\,
  \mu_{\Delta'} )$ are isomorphic if and only if $\Delta=\Delta'$.
\end{theorem}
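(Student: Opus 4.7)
The plan is to focus on statement (1), since (2) is essentially immediate: if $\Delta=\Delta'$ then the Delzant construction yields literally the same symplectic toric manifold, hence isomorphic; conversely, any isomorphism $\varphi$ satisfies $\mu_{\Delta'}\circ\varphi=\mu_{\Delta}$, forcing $\Delta=\mu_{\Delta}(M_{\Delta})=\mu_{\Delta'}(M_{\Delta'})=\Delta'$. The substance of the theorem is thus that any abstract symplectic toric manifold $(M,\omega,T,\mu)$ with $\mu(M)=\Delta$ is equivariantly symplectomorphic to the reduction $(M_{\Delta},\omega_{\Delta},T,\mu_{\Delta})$ constructed above.

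To prove (1) I would proceed in two stages. First, construct a $T$-equivariant diffeomorphism $\Psi:M\to M_{\Delta}$ satisfying $\mu_{\Delta}\circ\Psi=\mu$. By the Atiyah--Guillemin--Sternberg convexity theorem, $\mu$ has connected fibers and induces a homeomorphism $M/T\to\Delta$; the same holds for $\mu_{\Delta}$, so at the level of orbit spaces $\Psi$ is prescribed by the identity on $\Delta$. To lift to a smooth equivariant map, I invoke the local normal form for Hamiltonian torus actions: if $\mu(p)$ lies in the relative interior of a face of $\Delta$ whose primitive inward normals are $X_{f_1},\ldots,X_{f_k}$, a $T$-invariant neighborhood of the orbit $T\cdot p$ in $M$ is equivariantly symplectomorphic to a standard local model whose torus action and momentum map depend only on the vectors $X_{f_1},\ldots,X_{f_k}$. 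The Delzant condition (ii) guarantees that the same model describes $M_{\Delta}$ near the corresponding orbit in $\mu_{\Delta}^{-1}(\mu(p))$, so the local equivariant identifications may be patched via a partition of unity on $\Delta$ into a global $\Psi$.

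Second, once $\Psi$ is in hand, set $\omega':=\Psi^{*}\omega_{\Delta}$; this is a $T$-invariant symplectic form on $M$ sharing with $\omega$ the same momentum map $\mu$. I would then run an equivariant Moser argument: in the Cartan model of $T$-equivariant cohomology the class $[\omega-\mu]$ is determined by the momentum map, so $\omega-\omega'$ is equivariantly exact, and a $T$-invariant primitive produces a time-dependent $T$-equivariant vector field whose flow carries $\omega'$ back to $\omega$ while preserving $\mu$. The main obstacle is the gluing step in the construction of $\Psi$, because the isotropy jumps along the strata of $\Delta$ and the local equivariant models must be patched in a way that is simultaneously $T$-equivariant and compatible with $\mu$; my plan is to perform all gluings downstairs on $\Delta$, where ordinary partitions of unity suffice, and to leave the residual discrepancy between symplectic structures to the Moser step, which is precisely where the geometry becomes flexible.
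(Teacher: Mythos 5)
The paper itself does not prove this statement: it is Delzant's classification theorem, quoted from \cite{d} (with the construction of $(M_\Delta,\omega_\Delta,T,\mu_\Delta)$ recalled from Guillemin's book), so your sketch has to stand on its own. Part (2) as you handle it is fine. The problem is in part (1), at the step you treat as routine: ``the local equivariant identifications may be patched via a partition of unity on $\Delta$ into a global $\Psi$.'' A partition of unity lets you glue sections of a sheaf of vector spaces (or of a convex set); it does not let you glue local equivariant diffeomorphisms, which do not form a convex family. The actual content of Delzant's uniqueness argument lives exactly here: over an open cover $\{U_i\}$ of $\Delta$ the local normal form gives equivariant symplectomorphisms $\varphi_i:\mu^{-1}(U_i)\to\mu_\Delta^{-1}(U_i)$ commuting with the momentum maps, and on overlaps the discrepancies $\varphi_i\circ\varphi_j^{-1}$ are automorphisms of the fibration covering the identity of $\Delta$; these form an abelian sheaf on $\Delta$ (essentially smooth $T$-valued functions of the momentum variables acting by the flow of $f\circ\mu$), and one must show the resulting \v{C}ech $1$-cocycle is a coboundary. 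This uses the exact sequence relating this sheaf to the fine sheaf $\Cinf(\cdot,\mathfrak t)$ and the constant sheaf $\mathfrak t_\Z$, so that the obstruction lies in $H^2(\Delta,\mathfrak t_\Z)=0$ because $\Delta$ is convex, hence contractible. The convexity/contractibility of $\Delta$ is an essential ingredient that never appears in your sketch; without the cohomological trivialization there is a genuine obstruction to gluing, and no partition of unity circumvents it. Note also that once you can glue, the local maps are already symplectomorphisms, so the whole Moser stage becomes unnecessary; conversely, deferring the difficulty to Moser does not help, because the hard step is the gluing itself.

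Two smaller points. First, in your Moser step you assert that $\omega_t=\omega+t(\omega'-\omega)$ can be handled because ``the geometry becomes flexible''; nondegeneracy of $\omega_t$ along the path is not automatic for convex combinations of symplectic forms and needs an argument (here one can check it in action--angle/local model coordinates using that $\omega'-\omega$ is basic, since $\iota_{X^\sharp}(\omega-\omega')=0$ for all $X\in\mathfrak t$), and the claim that $[\omega-\omega']$ is equivariantly exact should be justified (e.g.\ by Kirwan injectivity, restricting to fixed points where both classes are given by the values of $\mu$ at the vertices). Second, these repairs are standard, but they do not rescue the proof as written: the missing idea is the sheaf-theoretic patching over the contractible polytope, which is the heart of Delzant's theorem.
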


Since the action of $\R^F/ \Z^F$ preserves the complex structure of
$\C^F$, $M_\Delta$ inherits by reduction a complex structure
compatible with $\om_{\Delta} $ and invariant by the action of $T$
(cf. \cite[Theorem~3.5]{GuSt}).  So $M_\Delta$ is a K{\"a}hler
manifold.

\section{Prequantization} \label{sec:prequantization}

Recall that a {\em prequantum bundle} on a symplectic manifold $(M,
\om)$ is a Hermitian line bundle with base $M$ endowed with a
connection of curvature $\frac{1}{i} \om$. An \emph{automorphism of
  prequantum bundle} is an automorphism of Hermitian line bundles
preserving the connection.

Let $\mathcal{L}$ be a prequantum bundle over $(M, \om)$. Consider an
action of a Lie group $G$ on $\mathcal{L}$ by prequantum bundle
automorphisms. This action lifts an action of $G$ on $M$. One proves
that the latter action is Hamiltonian and has a natural momentum $\mu$
determined by the following condition: the induced action of the Lie
algebra $\mathfrak g$ on $\Cinf (M, \mathcal{L})$ is given by the
Kostant-Souriau operators
$$ f \rightarrow  \nabla_{X^\sharp}f  + {\rm i} \langle \mu , X \rangle f   , \qquad X \in \mathfrak{g}, $$ 
where we denote by $X^\sharp$ the infinitesimal action of $X$ on $M$
and by $\nabla$ the covariant derivative of the prequantum bundle.

If $G$ and $M$ are connected, the action on ${\mathcal{L}}$ is
conversely determined by the action on $M$ and the momentum
$\mu$. Furthermore if $G$ is a torus $T$ acting in a Hamiltonian way
on a connected $M$, then the momentum associated to a lift to
${\mathcal{L}}$, if it exists, is unique up to a translation by a
vector of $2 \pi \mathfrak{t}^*_{\Z}$.  Let us consider the case of
symplectic toric manifolds.  In the following result,  the \emph{length of an edge} $e$ 
of the polytope $\Delta \subset \mathfrak{t}^*$ 
is the smallest positive real $\ell$
such that $ e / \ell \in \mathfrak{t}^*_{\Z}$.

\begin{figure}[htbp]
  \begin{center}
    \includegraphics[width=7cm]{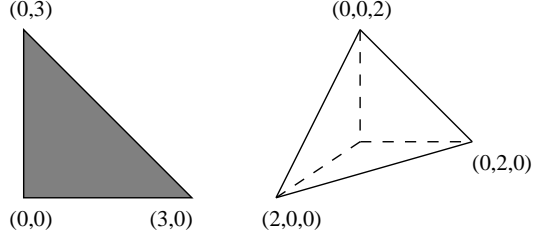}
    \caption{Delzant polytopes corresponding to the complex projective
      spaces $\mathbb{CP}^2$ and $\mathbb{CP}^3$ equipped with scalar
      multiples of the Fubini\--Study symplectic form.}
    \label{fig:AFF}
  \end{center}
\end{figure}

\begin{prop} \label{prop:prequantization} A symplectic toric manifold
  $(M,\omega,T,\mu)$ with momentum polytope $\Delta \subset \mathfrak{t}^*$ admits a
  prequantum bundle $\mathcal L$ if and only if the edges of $\Delta$
  belong to $2\pi \got{t}_\Z^*$.  If it is the case, the prequantum
  bundle is unique up to isomorphism and the action can be lifted to
  ${\mathcal {L}}$ in such a way that each element acts as a
  prequantum bundle automorphism. The moments corresponding to the
  possible lifts are the ones such that the vertices of the associated
  polytope belong to $2 \pi \got{t}^*_\Z$.
\end{prop}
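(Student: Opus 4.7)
The plan is to deduce everything from the following general principle: for a Hamiltonian action of a torus $T$ on a connected symplectic manifold $(M,\omega)$ carrying a prequantum bundle $\mathcal{L}$ with moment $\mu$, the action lifts to $\mathcal{L}$ by prequantum automorphisms iff $\langle \mu(x), X\rangle \in 2\pi\Z$ for every $X \in \mathfrak{t}_\Z$ and every $x \in M$ fixed by the flow of $X^\sharp$. At such a fixed point $x$ the Kostant-Souriau lift of $X$ acts on the fiber $\mathcal{L}_x$ as multiplication by $\ii \langle \mu(x), X\rangle$, so its time-$1$ flow is trivial iff $\langle \mu(x), X\rangle \in 2\pi\Z$; the non-fixed-point conditions follow from these by Stokes' theorem once an infinitesimal lift has been set up. For a symplectic toric manifold, the $T$-fixed points are exactly the preimages under $\mu$ of the vertices of $\Delta = \mu(M)$, and the primitive normals $\{X_f\}_{f \in F_v}$ at a vertex $v$ form a $\Z$-basis of $\mathfrak{t}_\Z$, so the criterion reduces to $v \in 2\pi \mathfrak{t}^*_\Z$. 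This immediately identifies the moments of the possible lifts for a fixed $\mathcal{L}$.

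Next I would handle the existence and uniqueness of $\mathcal{L}$ itself. Since the moment is determined by the $T$-action up to a global translation by $\mathfrak{t}^*$, and translations move all vertices uniformly, the condition \emph{some translate of $\Delta$ has vertices in $2\pi \mathfrak{t}^*_\Z$} is equivalent to \emph{all edges of $\Delta$ lie in $2\pi \mathfrak{t}^*_\Z$}, because the $1$-skeleton of a convex polytope is connected and so pairwise vertex differences are sums of edges. That the bare existence of $\mathcal{L}$ (disregarding any lift) is governed by the same condition follows from the classical identification ``$\mathcal{L}$ exists iff $[\omega_\Delta]/(2\pi) \in H^2(M_\Delta;\Z)$'': the invariant $2$-spheres $S_e$ attached to the edges of $\Delta$ generate $H_2(M_\Delta;\Z)$, and $\int_{S_e} \omega_\Delta$ equals the length of $e$. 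Uniqueness up to isomorphism follows from the simple connectedness of $M_\Delta$ (standard for smooth compact toric varieties), which forces $H^1(M_\Delta; U(1)) = \Hom(\pi_1(M_\Delta), U(1)) = 0$.

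To actually construct $\mathcal{L}$ when the edge condition holds, I would proceed by Delzant descent. First translate $\mu$ so that $\lambda_f \in 2\pi\Z$ for every facet $f$. On $(\C^F,\omega)$, take the trivial Hermitian line bundle $\C^F \times \C$ with connection $\nabla = d - \ii \alpha$, where $\alpha = (4\pi)^{-1}\sum_{f \in F}(x_f\, dy_f - y_f\, dx_f)$; a direct check gives $d\alpha = \omega$, so the curvature equals $\omega/\ii$. The linear $\R^F/\Z^F$-action preserves $\alpha$ and lifts to prequantum automorphisms of this bundle with moment $\mu$ as in \eqref{muf}, the integrality $\lambda_f \in 2\pi\Z$ being exactly the instance of the first paragraph at the fixed point $0 \in \C^F$ that makes the lift close up as a genuine group action. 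Restricting to $Z = \mu_N^{-1}(0)$ and taking the $N$-quotient then produces $\mathcal{L} = (\C^F \times \C)|_Z/N \to M_\Delta$ with an induced connection of curvature $\omega_\Delta/\ii$ and a residual $T$-action with moment $\mu_\Delta$ whose vertices lie in $2\pi \mathfrak{t}^*_\Z$. I expect the main technical obstacle to lie precisely in the descent step: one must verify that the $N$-action on the total space of $(\C^F \times \C)|_Z$ has trivial stabilizers and is compatible with the principal-bundle structure of $p \colon Z \to M_\Delta$. This relies on the freeness of $N$ on $Z$, on $\mu_N$ vanishing on $Z$ (so that the infinitesimal lift of $\mathfrak{n}$ is horizontal along $N$-orbits), and on $\lambda_f \in 2\pi\Z$ (so that $\mathfrak{n}_\Z$ acts trivially on fibers rather than merely its universal cover).
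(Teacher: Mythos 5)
Your proof is correct, and its backbone coincides with the paper's: necessity via the embedded symplectic $2$-spheres $\mu^{-1}(e)$ whose area is the edge length, paired with the Chern class $\frac{1}{2\pi}[\omega]$; uniqueness via simple connectedness of $M$; and sufficiency via exactly the paper's Delzant descent (same trivial bundle over $\C^F$, same connection, same role of $\lambda_f\in 2\pi\Z$ in making the lift of $\R^F/\Z^F$ close up, then restriction to $Z=\mu_N^{-1}(0)$ and quotient by $N$, where the vanishing of $\mu_N$ on $Z$ is what makes the connection descend). You genuinely diverge in two supplements. First, you obtain the final assertion (which moments occur for lifts) from a fixed-point holonomy criterion: the time-$1$ flow of the Kostant--Souriau lift of $X\in\mathfrak{t}_\Z$ acts on the fibre over a fixed point by the phase $e^{\pm i\langle\mu(x),X\rangle}$, and triviality there forces triviality everywhere; the cleanest justification of that last step is not ``Stokes' theorem'' but the remark that a prequantum bundle automorphism covering the identity of a connected manifold is multiplication by a constant in $U(1)$. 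The paper instead produces one explicit lift (with vertices in $2\pi\mathfrak{t}^*_\Z$) from the construction and invokes the general fact, stated just before the proposition, that the moment of a lift is unique up to translation by $2\pi\mathfrak{t}^*_\Z$; your route makes the parametrization of moments more transparent. Second, you add a purely cohomological existence argument resting on the claim that the invariant spheres over the edges generate $H_2(M_\Delta;\Z)$; this is true for smooth compact toric varieties but is an extra citable input (e.g.\ Cox--Little--Schenck, or Fulton) that the paper avoids by constructing $\mathcal{L}$ directly --- and which your own descent construction also renders redundant, since once the moment is translated to have vertices in $2\pi\mathfrak{t}^*_\Z$ the quotient bundle is the desired prequantum bundle. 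Neither of these points is a gap; they are alternative or supplementary arguments yielding the same statement, with yours slightly less self-contained but more explicit about the lifting criterion.
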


\begin{proof} 
  Let us check that the condition is necessary. Let $(M, \om)$ be a
  symplectic toric manifold with momentum $\mu$. Let $e$ be an edge of
  $\Delta= \mu (M)$.  The length of $e$ is the smallest positive real
  $\ell$ such that $ e / \ell \in \mathfrak{t}^*_{\Z}$. Working with
  the local symplectic charts associated to the vertices of $e$, one
  checks that $\mu^{-1} (e)$ is a symplectic 2-sphere embedded in $M$
  with volume $ \ell$. Because the image by the map $${\rm H}^2 ( M,
  \Z) \to {\rm H}^2 ( M, \R)$$ of the Chern class of a prequantum
  bundle of $M$ is $\frac{1}{2\pi} [\om]$, if $M$ is prequantizable,
  then $\ell/2 \pi$ is necessarily integral, so $e \in 2 \pi
  \got{t}_\Z^*$.  To conclude, observe that $(2\pi)^{-1} \De$ can be
  translated to a polytope with integral vertices if and only if its
  edges have integral lengths. The uniqueness of the prequantum bundle
  follows from the fact that any symplectic toric manifold is simply
  connected.

  Conversely assume that $(M, \om,T,\mu)$ is a symplectic toric manifold
  with a momentum such that the lengths of the edges of the momentum
  polytope $\Delta$ are integral multiple of $2 \pi$. Modifying this
  momentum by an additive constant in $\got{t}^*$ if necessary, we may
  assume that the vertices of $\Delta $ belong to $2 \pi \got{t}^*_{\mathbb{Z}}$.
  Then the Delzant construction provides a prequantum bundle
  $\mathcal{L}$ and a lift of the action to ${\mathcal{L}}$. Since we
  need this construction in the sequel, let us give some
  details. Consider the prequantum bundle of $\C^{F}$ given by
  $\mathcal{L}_F := \C^F \times \C$ with connection
  \[ \nabla \,=\, d+\frac{1}{4 \pi \ii}\sum_{f\in F} (x_f\DD y_f
  -y_f\DD x_f)\,= \,d+ \frac{1}{8 \pi } \sum_{f\in F} (z_f \DD
  \bar{z}_f - \bar{z}_f\DD z_f). \] Here and in the remainder of the
  proof, we use the same notations as in section
  \ref{sec:model-sympl-toric}. In particular the symplectic form of
  $\C^F$ is given by (\ref{eq:symp_form_C^F}). Since the vertices of
  $\Delta $ belong to $2 \pi \got{t}^*_Z$, the $\la_{f}$'s defining
  the faces of $\Delta$ are integral multiple of $2\pi$. So we can
  lift the action of $\R^F / \Z^F$ on $\C^F$ to $\mathcal{L}_F$ by
$$  t \cdot (z,u) = ( t \cdot z , u \op{e}^{ {\rm i} \sum_{f \in F}  t_f \la_f} ). $$
With a straightforward computation, one checks that this action
preserve the prequantum bundle structure and that its associated
momentum is $\mu$ defined in (\ref{muf}).

Recall that the Delzant manifold $(M_\Delta, \om _{\Delta})$ is the
quotient of $Z = \mu _N^{-1} ( 0)$ by the action of subtorus $N$ of
$\R^F / \Z^F$. Then quotienting by $N$ the restriction of
$\mathcal{L}_F$ to $Z$, we obtain a prequantum bundle
$\mathcal{L}_\De$ over $M_\Delta$
(cf. \cite[Theorem~3.2]{GuSt}). Furthermore the group $T = (\R^F/
\Z^F) /N$ acts on $\mathcal{L}_\De$. This action preserves the
prequantum bundle structure, its associated momentum is the
application $\mu_\Delta$ defined in Theorem \ref{theo:delzant}.
\end{proof}

\section{Quantum model} \label{sec:quantum-model}

In this section we introduce a
quantum model for quantum toric system and compute its spectrum.

Consider a Delzant polytope $\Delta \subset \mathfrak{t}^*$ with
vertices in $2\pi \got{t}_{\Z}^*$. Then as explained in Section
\ref{sec:prequantization}, the Delzant manifold $( M_{\Delta} ,
\om_{\Delta}, T, \,\mu_{\Delta})$ admits a prequantum bundle ${\mathcal{L}}_{\Delta}$
unique up to isomorphism. This line bundle has a unique holomorphic
structure compatible with the complex structure of $M_{\De}$ provided
by reduction and with the connection.  So for any positive integer
$k$, we can define the quantum space
$$ \Hilb_k^{\Delta}: = {\rm H}^0 ( M_{\Delta} , {\mathcal{L}}^k_{\Delta} ) $$  
which consists of the holomorphic sections of
${\mathcal{L}}^k_{\Delta}$. 

For any $X \in \mathfrak{t}$, consider the
\emph{rescaled Kostant-Souriau operators}
\begin{eqnarray} \label{eq:Txmux} T_{X,k} := \langle \mu_\Delta , X
  \rangle + \frac{1}{{\rm i}k} \nabla_{X^{\sharp}}: \Hilb_k^{\Delta}
  \rightarrow \Hilb_k^{\Delta}
\end{eqnarray}
This operator is well-defined because the complex structure is
invariant by the action of $T$ on $M_{\Delta}$. The rescaling has the
effect that the $T_{X,k}$'s are self-adjoint and that their joint
spectrum is the intersection of the polytope $\Delta$ and a rescaled
lattice. The precise result is the following.

\begin{theorem} \label{theo:tt} There is an orthogonal decomposition
  of the quantum space   $\Hilb_k^{\Delta}$    into a direct sum of lines:
$$ \Hilb_k^{\Delta}  = \bigoplus_{\ell \in (\frac{2\pi}{k}  \mathfrak{t}_{\Z}^* ) \cap \Delta}  D_{\ell}^k$$
such that, for any $X \in {\mathfrak{t}}$,
$$ T_{X,k} \Psi = \ell (X) \Psi , \qquad \textup{for all}\,\,\, \Psi \in D_{\ell}^k.$$ 
\end{theorem}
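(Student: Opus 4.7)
The plan is to lift the problem to the ambient space $\C^F$ used in the Delzant construction, where the quantum space admits an explicit monomial basis that simultaneously diagonalizes the lifted action of the full torus $\R^F/\Z^F$. Reducing this basis by $N$ will produce the decomposition indexed by the lattice points of $\Delta$.

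First, I would identify $\Hilb_k^\Delta = \mathrm{H}^0 (M_\Delta, \mathcal{L}_\Delta^k)$ with the space of $N$\--invariant holomorphic sections of $\mathcal{L}_F^k$ on $\C^F$ (equivalently, on an $N_\C$\--saturated neighborhood of $Z$). This is the standard Kempf-Ness / GIT picture for the toric reduction described in Section \ref{sec:model-sympl-toric}: since $M_\Delta$ is the holomorphic quotient of the semistable locus of $\C^F$ by $N_\C$, and $\mathcal{L}_\Delta$ is the reduction of $\mathcal{L}_F$ by $N$, pullback along $Z \hookrightarrow \C^F$ gives an isomorphism between $\mathrm{H}^0(M_\Delta, \mathcal{L}_\Delta^k)$ and the $N$\--invariant holomorphic sections of $\mathcal{L}_F^k$ of polynomial growth, i.e. $\C[z_1,\dots,z_{|F|}]^N$ using the canonical trivialization of $\mathcal{L}_F$.

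Second, I would diagonalize. In the trivialization of $\mathcal{L}_F^k$, the monomials $z^\alpha$ ($\alpha \in \N^F$) form a basis. Using the $k$\--th power of the lift of the $\R^F/\Z^F$\--action constructed in the proof of Proposition~\ref{prop:prequantization}, a direct computation gives
\[
t \cdot z^\alpha \,=\, \exp\Bigl(\ii \sum_{f\in F} t_f (2\pi \alpha_f - k\lambda_f)\Bigr)\, z^\alpha ,
\]
so $z^\alpha$ is a weight vector for the extended torus action. The monomial is $N$\--invariant iff this character is trivial on $\mathfrak{n}=\ker\pi$, i.e. iff $(2\pi\alpha_f - k\lambda_f)_f \in \pi^*(\mathfrak{t}^*)$; equivalently there exists $\ell \in \mathfrak{t}^*$ with
\[
2\pi \alpha_f \,=\, k\bigl(\langle X_f, \ell\rangle + \lambda_f\bigr) \qquad \text{for every } f \in F .
\]
The non-negativity condition $\alpha_f \geq 0$ is exactly $\ell \in \Delta$. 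Using the Delzant hypothesis at any vertex $v$ (the $X_f$, $f\in F_v$, form a $\Z$\--basis of $\mathfrak{t}_\Z$) together with $\lambda_f \in 2\pi \Z$ (which follows from $v\in 2\pi\mathfrak{t}_\Z^*$), the integrality $\alpha_f \in \Z$ is equivalent to $\ell \in (2\pi/k)\mathfrak{t}_\Z^*$. This sets up the promised bijection between a basis of $N$\--invariant monomials and the lattice points of $\Delta \cap (2\pi/k)\mathfrak{t}_\Z^*$; let $D_\ell^k$ be the line spanned by the corresponding section.

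Third, I would compute the action of $T_{X,k}$ on each $D_\ell^k$. For $X\in\mathfrak{t}$, picking a lift $t \in \R^F$ of $\exp(X)$ under $\pi$, the character computation above rewrites on the reduced line $D_\ell^k$ as $\exp(X) \cdot s = e^{\ii k \ell(X)} s$, so the infinitesimal Kostant-Souriau operator acts by multiplication by $\ii k \ell(X)$; dividing by $\ii k$ as in the definition (\ref{eq:Txmux}) gives $T_{X,k}s = \ell(X) s$. Orthogonality of the $D_\ell^k$ for distinct $\ell$ is automatic because they are weight spaces of a unitary action of the compact torus $T$ on $\Hilb_k^\Delta$ (the reduced prequantum Hermitian metric being $T$\--invariant), and they span because the monomials $z^\alpha$ span the $N$\--invariant polynomials.

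The main obstacle is Step one: making precise the identification of $\Hilb_k^\Delta$ with $N$\--invariant polynomial sections of $\mathcal{L}_F^k$. This requires the GIT / Guillemin-Sternberg-type comparison between symplectic and complex quotients of line bundles, for which one either cites the standard toric construction or verifies directly that a holomorphic $N$\--invariant section on $Z$ extends uniquely across the $N_\C$\--orbits to a polynomial on $\C^F$, the polynomial growth being automatic since $M_\Delta$ is compact and the resulting dimension count (via Ehrhart-type counting of $\Delta \cap (2\pi/k)\mathfrak{t}_\Z^*$) matches $\dim \mathrm{H}^0(M_\Delta, \mathcal{L}_\Delta^k)$ given by Riemann-Roch.
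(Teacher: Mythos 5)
Your proposal follows essentially the same route as the paper: lift to the quantization of $\C^F$ from the Delzant construction, diagonalize the lifted torus action on the monomial basis, characterize the $N$\--invariant weights and match them bijectively (via the Delzant vertex condition and $\lambda_f\in 2\pi\Z$) with the lattice points $(\frac{2\pi}{k}\mathfrak{t}_\Z^*)\cap\Delta$, then transfer to $\Hilb_k^\Delta$ — exactly the content of the paper's Lemma~\ref{lemm:bij} and surrounding computation. The only difference is that you justify the key identification of $\Hilb_k^\Delta$ with the $N$\--invariant sections upstairs by the GIT/Kempf\--Ness description of the toric quotient, whereas the paper invokes the Guillemin\--Sternberg ``quantization commutes with reduction'' theorem (extended to this noncompact setting via \cite{Ch2006a} and \cite{Gu1994}); both rest on the same standard fact, so the argument is correct.
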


\begin{proof}
  The proof uses the Delzant construction recalled in Section
  \ref{sec:model-sympl-toric} at the quantum level. Consider the same
  prequantum bundle $\mathcal{L}_F \rightarrow \C^F$ as in the proof
  of Proposition \ref{prop:prequantization}.  The associated quantum
  space $\mathcal{B}_k$ is the space of holomorphic sections $\psi$ of
  $\mathcal{L}^k_F$ such that
  \[
  \int_{\CM^F} \abs{\psi}^2(z) \nu(z) < \infty,
  \]
  where $\nu$ is the Liouville measure.  Here holomorphic means that
  the covariant derivative with respect to antiholomorphic vectors
  vanishes. Such a holomorphic section can be written
$$\psi=\op{e}^{-k\abs{z}^2/8 \pi}f,$$ 
where $f$ is a plain holomorphic function on $\CM^F$. Here
$\abs{z}^2=\sum_f \abs{z_f}^2$. So $\mathcal{B}_k$ can be identified
with the usual Bargmann space, that is the space of holomorphic
functions on a $\C^p$ whose square is integrable with respect to a
given gaussian weight. An orthogonal basis of $\mathcal{B}_k$ is given
by the family
\[
\psi_\alpha = \op{e}^{-\frac{k}{8 \pi}\abs{z}^2}z^\alpha, \qquad
\alpha\in \NM^F.
\]
Consider the Kostant-Souriau operator associated to momentum $\mu$
given in (\ref{muf})
\[
S_{X,k} = \pscal{\mu}{X} + \frac{1}{{\rm i}k} \nabla_{X^{\sharp}}.
\]
If $X=e_f$ a straightforward computation shows that
$$S_{X,k} \Bigl( \op{e}^{-\frac{k}{8 \pi}\abs{z}^2} g(z) \Bigr)=
\op{e}^{-\frac{k}{8 \pi }\abs{z}^2} \Biggl( \frac{2 \pi }{ k}
z_f\partial_{z_f}g - \lambda_f g \Biggr).$$ We deduce that for any $X
\in \RM^F$ and $\al \in \N^F$,
\begin{eqnarray} \label{eq:tx} S_{X,k}(\psi_\alpha) =\Bigl\langle X,\,
  \frac{2 \pi }{k} \alpha -\lambda \Bigr\rangle \psi_\alpha.
\end{eqnarray}
Recall that the Delzant space was defined as the symplectic quotient
of $\C^{F}$ by the subtorus $N$ of $\R^F / \Z^F$. The corresponding
space at the quantum level is
\[
\mathcal{B}^{\mathfrak{n}}_k := \{\psi\in \mathcal{B}_k \quad | \quad S_{X,k}
\psi=0 \quad \textup{for all}\,\, X\in \frak{n}\}.
\]
We call it the \emph{reduced quantum space}. We deduce from Equation
(\ref{eq:tx}) that a basis of $\mathcal{B}^{\mathfrak{n}}_k$ consists
of the $\Psi_{\al}$'s such that $\alpha \in \N^F$ satisfies $$\langle
X , \frac{2 \pi}{k}\al-\lambda \rangle =0$$ for all $X \in \mathfrak
n$.  Equivalently, $\frac{2\pi}{k} \alpha$ runs over $(\frac{2\pi}{k}
\mathbb{N}^F ) \cap (\lambda+\ker (\iota_{\mathfrak n}^*))$. This set
is in bijection with $(\frac{2 \pi}{k} \mathfrak{t}_{\mathbb{Z}}^* )
\cap \Delta$.
\begin{lemma} \label{lemm:bij} The map $\pi^* + \lambda$ from
  $\mathfrak{t}^*$ to $(\mathbb{R}^F)^*$ restricts to a bijection 
  $$(\frac{2 \pi}{k} \mathfrak{t}_{\mathbb{Z}}^*) \cap \Delta \longrightarrow (\frac{2 \pi}{k} \mathbb{N}^F ) \cap (\lambda+\ker
  (\iota_{\got{n}}^*)). 
  $$
  Furthermore, for any $X \in \mathbb{R}^F$ and
  $\ell \in \frac{2 \pi }{k} \mathfrak{t}_{\mathbb{Z}}^* \cap \Delta$
  we have that
 $$
 S_{X,k} \bigl( \psi_{\alpha}\bigr) =\langle \ell ,\, \pi(X) \rangle
 \, \psi_{\alpha},\qquad \text{if}\,\,\,\,\,\,\, \frac{2 \pi }{k} \al
 =\pi^*(\ell)+\lambda.
 $$
\end{lemma}

\begin{proof}[Proof of Lemma \ref{lemm:bij}]
  We know that $\pi^*$ is injective with image
  $\ker(\iota_{\mathfrak{n}}^*)$.  Using the Delzant condition on a
  vertex (Step 1 in Section \ref{sec:model-sympl-toric}), one sees
  that $\pi^*$ restricts to a bijection from $\mathfrak{t}_{\Z}^*$ to
  $(\Z^F)^* \cap \ker(\iota^*_{\mathfrak{n}})$. By the prequantization
  condition, $ \la \in 2 \pi (\Z^F)^* \subset \frac{2 \pi }{k} (
  \Z^F)^*$. So $\pi^* + \lambda$ restricts to a bijection from
  $\frac{2 \pi}{k} \mathfrak{t}_{\mathbb{Z}}^* $ to $(\frac{2 \pi}{k}
  \mathbb{Z}^F ) \cap (\lambda+\ker (\iota_{\got{n}}^*))$.
  Furthermore, the proof to show that $\mu_\Delta ( M_{\Delta}) =
  \Delta$ implies that
  \begin{eqnarray}
    \pi^*(\Delta)+\lambda=\mathbb{R}^F_{\ge 0} \cap (\lambda + \ker(\iota^*_{\got{n}})).
  \end{eqnarray}
  This implies the first part of the lemma.  The second assertion
  follows from (\ref{eq:tx}).
\end{proof}

The end of the proof of Theorem \ref{theo:tt} is an application of the
``quantization commutes with reduction'' theorem of Guillemin\--Sternberg. Lifting the action
of $\T^F = \RM^F/\ZM^F$ to $\mathcal{L}_F$ as in the proof of
Proposition \ref{prop:prequantization}, we get an action of $\T^F$ on
$\mathcal{B}_k$. The reduced quantum space is the subspace of
$\mathcal{B}_k$ of $N$-invariant vectors, in other words
$\mathcal{B}^{\mathfrak{n}}_{k}=(\mathcal{B}_{k})^N$. Now by
Guillemin-Sternberg theorem (\cite{GuSt}), we have an isomorphism
$$\Phi_k : \mathcal{B}^{\mathfrak{n}}_k \rightarrow \mathcal{H}_k^\Delta .$$
The proof in \cite{GuSt} given in the compact case extends to our
setting by \cite{Ch2006a} and \cite{Gu1994}.  Furthermore, under the
isomorphism $\Phi_k$, the action of the torus $T$ on
$\mathcal{H}_k^{\Delta}$ corresponds to the action of $\mathbb{T}^F/N$
on $\mathcal{B}^{\mathfrak{n}}_k$. Then, passing to the level of Lie
algebras, we get the following relation between the Kostant-Souriau
operators:
$$ \Phi_k \Bigl(  S_{X,k} \Psi \Bigr) = T_{\pi(X),k} \Phi_k ( \Psi), \qquad \textup{for all}\,\,  \Psi \in 
\mathcal{B}^{\mathfrak{n}}_k  .$$ 
This concludes the proof of Theorem \ref{theo:tt}.
\end{proof}

From the previous theorem, we deduce the following quantum normal
form.  Consider a Delzant polytope $\De$ in the Lie algebra $\R^n$ of
$\R^n / \Z^n$. Assume $(M_{\De}, \om_{\De})$ has a prequantum bundle
${\mathcal{L}}_{\De}$ and define the associated quantum spaces
$\mathcal{H}_k^\Delta = {\rm H}^0 (M_{\De} ,{\mathcal{L}}_{\De}^k)
$. Starting from the canonical basis $(e_i)$ of $\R^n$, we get $n$
operators
$$ T^{\De}_{i,k} :=T_{e_i, k} :\mathcal{H}_k^\Delta \rightarrow \mathcal{H}_k^\Delta, \qquad k \in \Z_{>0}, \; 
1 \leq i \leq n,$$
defined by Kostant-Souriau formula (\ref{eq:Txmux}).
\begin{cor} \label{cor:quantum-model} For any $k$, $T_{1,k}^{\De},
  \ldots , T_{n,k}^{\De}$ are mutually commuting operators with simple
  joint eigenspaces. Their joint spectrum is $(v+ \frac{2 \pi }{k}
  \Z^n) \cap \De$ where $v$ is any vertex of $\De$.
\end{cor}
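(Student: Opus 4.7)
The plan is to derive the corollary as a direct specialization of Theorem \ref{theo:tt} to the torus $T = \R^n/\Z^n$, for which $\mathfrak{t} \simeq \R^n$ carries the canonical basis $(e_i)$, and the integral lattice in $\mathfrak{t}^*$ is identified with $\Z^n$ via the dual basis. Under this identification a covector $\ell \in \mathfrak{t}^*$ is recovered from its coordinates $\ell(e_i)$, and the rescaled dual lattice is $\frac{2\pi}{k}\Z^n$.

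First, I would invoke Theorem \ref{theo:tt} to write
\[
\Hilb_k^{\Delta} \;=\; \bigoplus_{\ell \in \frac{2\pi}{k}\Z^n \,\cap\, \Delta} D_\ell^k,
\]
where each $D_\ell^k$ is a one-dimensional joint eigenspace on which $T_{X,k}$ acts by the scalar $\ell(X)$ for every $X \in \R^n$. Specializing $X$ to the basis vector $e_i$ gives $T^\Delta_{i,k}\Psi = \ell_i \Psi$ for $\Psi \in D_\ell^k$. The $T^\Delta_{i,k}$'s are therefore simultaneously diagonal in the induced basis, hence mutually commute.

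Next, to check that the joint eigenspaces are simple, I would observe that the assignment $\ell \mapsto (\ell_1,\dots,\ell_n)$ is injective, so two distinct lattice points $\ell, \ell' \in \frac{2\pi}{k}\Z^n\cap \Delta$ produce distinct $n$-tuples of eigenvalues. Consequently every joint eigenspace of $(T^\Delta_{1,k},\dots,T^\Delta_{n,k})$ coincides with one of the lines $D_\ell^k$, which is one-dimensional, and the joint spectrum as a subset of $\R^n$ is exactly $\frac{2\pi}{k}\Z^n\cap \Delta$.

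Finally, to match the stated formula with an arbitrary vertex $v$ of $\Delta$, I would invoke Proposition \ref{prop:prequantization}: the existence of the prequantum bundle together with the chosen lift of the action forces the vertices of $\Delta$ to lie in $2\pi \mathfrak{t}^*_{\Z} = 2\pi\Z^n$. Thus $v \in 2\pi\Z^n \subset \frac{2\pi}{k}\Z^n$, so $v + \frac{2\pi}{k}\Z^n = \frac{2\pi}{k}\Z^n$ as subsets of $\R^n$, and the intersection $(v + \frac{2\pi}{k}\Z^n)\cap \Delta$ coincides with $\frac{2\pi}{k}\Z^n\cap \Delta$ regardless of which vertex is chosen. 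This concludes the identification. There is no real obstacle here; the only subtlety is the vertex shift, and it is absorbed by the prequantization condition on the polytope.
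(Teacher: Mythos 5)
There is a genuine gap in your last step, and it is exactly the point the statement is designed to handle. In the setting of Corollary~\ref{cor:quantum-model} the only hypothesis is that $(M_{\De},\om_{\De})$ is prequantizable, and by Proposition~\ref{prop:prequantization} this is equivalent to the \emph{edges} of $\De$ lying in $2\pi\got{t}_{\Z}^*$, not the vertices. The vertices of $\De$ itself need not belong to $2\pi\Z^n$; only some translate $\De+c$ has that property. So your claim that prequantization forces $v\in 2\pi\Z^n$, hence $v+\frac{2\pi}{k}\Z^n=\frac{2\pi}{k}\Z^n$, is not justified, and if it were true the corollary's phrasing with a vertex shift would be vacuous. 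Relatedly, you cannot invoke Theorem~\ref{theo:tt} directly for $\De$: that theorem is stated and proved only for Delzant polytopes with vertices in $2\pi\got{t}_{\Z}^*$ (the proof of Lemma~\ref{lemm:bij} uses $\la\in 2\pi(\Z^F)^*$, which comes from that hypothesis), so your opening decomposition of $\Hilb^{\De}_k$ indexed by $\frac{2\pi}{k}\Z^n\cap\De$ is an application of the theorem outside its range of validity. Your middle steps (simultaneous diagonalization, commutativity, simplicity via injectivity of $\ell\mapsto(\ell(e_1),\dots,\ell(e_n))$) are fine and are implicit in Theorem~\ref{theo:tt}.

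The paper's one-line proof shows the intended repair: apply Theorem~\ref{theo:tt} to the translated polytope $\De-v$, whose vertices do lie in $2\pi\Z^n$ because the edges of $\De$ do. Since $M_{\De-v}\cong M_{\De}$ with momentum map $\mu_{\De}-v$, the corresponding Kostant--Souriau operators satisfy $T^{\De}_{i,k}=T^{\De-v}_{i,k}+v_i\,\mathrm{Id}$, so the joint spectrum $\frac{2\pi}{k}\Z^n\cap(\De-v)$ furnished by the theorem translates to $(v+\frac{2\pi}{k}\Z^n)\cap\De$. Independence of the chosen vertex then follows because any two vertices of $\De$ differ by an element of $2\pi\Z^n\subset\frac{2\pi}{k}\Z^n$ (again by the edge condition), so the shifted lattice $v+\frac{2\pi}{k}\Z^n$ does not depend on $v$. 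With these two adjustments your argument becomes essentially the paper's proof.
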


\begin{proof} Apply Theorem \ref{theo:tt} to the polytope $\De - v$,
  whose vertices are integral.
\end{proof}

\section{Global quantum normal form}\label{sec:glob-quant-norm}

\subsection*{Toeplitz operators}

We briefly review Toeplitz operators.  Let $(M,\omega)$ be a compact
connected symplectic manifold with a prequantum line bundle
$\mathcal{L}$. Assume that $M$ is endowed with a complex stucture $j$
compatible with $\om$, so that $M$ is K{\"a}hler and $\mathcal{L}$ is
holomorphic. Here the holomorphic structure of the prequantum bundle
is the unique one compatible with the connection.  Recall that for a
positive integer $k$, $\mathcal{H}_k:=\mathrm{H}^0(M,\mathcal{L}^k)$
is the space of holomorphic sections of $\mathcal{L}^k$.

Since $M$ is compact, $\mathcal{H}_k$ is a closed finite dimensional
subspace of the Hilbert space ${\rm L}^2(M, \mathcal{L}^k)$. Here the
scalar product is defined by integrating the Hermitian pointwise
scalar product of sections agains the Liouville measure of $M$.
Denote by $\Pi_k$ the orthogonal projector of ${\rm L}^2(M,
\mathcal{L}^k)$ onto ${\mathcal{H}}_k$.

A \emph{Toeplitz operator} is any sequence $(T_k \colon \Hilbert_k
\rightarrow \Hilbert_k )_{k \in \mathbb{N}^*}$ of operators of the
form
\begin{eqnarray}
  \Big(T_k = \Pi_k f(\cdot,\, k) + R_k \Big)_{k\in \mathbb{N}^*} \label{qq}
\end{eqnarray}
where $f(\cdot,\,k)$, viewed as a multiplication operator, is a
sequence in $\Cinf( M)$ with an asymptotic expansion $f_0 + k^{-1} f_1
+ \ldots $ for the $\Cinf$ topology, and the norm of $R_k$ is
$\mathcal{O}(k^{-\infty})$.

We denote by $\mathscr{T} (M, {\mathcal{L}}, j ) $ the set of Toeplitz
operators. The following result corresponds to \cite[Theorem
1.2]{Ch2006a}.

\begin{theorem} \label{sc_algebra} The set $\mathscr{T}= \mathscr{T}
  (M, {\mathcal{L}}, j ) $ is a semi-classical algebra associated to
  $(M, \om)$ in the following sense. The set $\mathscr{T}$ is closed
  under the formation of product. So it is a star algebra, the
  identity is $(\Pi_k)_{k \in \mathbb{N}^*}$. The symbol map $\si_{\contravariant} :
  \Toeplitz \rightarrow \Cinf(M)[[\hb]],$ sending $T_k$ into the
  formal series $f_0 + \hb f_1 + ...$ where the functions $f_i$ are
  the coefficients of the asymptotic expansion of the multiplicator
  $f(\cdot,\, k)$, is well defined.  It is onto and its kernel is the
  ideal consisting of $\mathcal{O}(k^{-\infty})$ Toeplitz operators. More
  precisely for any integer $\ell$, $$ \| T_k \| =
  \mathcal{O}(k^{-\ell} ) \text{ if and only if } \si_{\contravariant}
  (T_k) = \mathcal{O}( \hb^{\ell}). $$ Furthermore, the induced product
  $*_{\contravariant}$ on $\Cinf (M) [[\hbar]]$ is a star-product.
\end{theorem}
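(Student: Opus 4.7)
The plan is to reduce everything to a calculus of Fourier integral operators with complex phase on the unit circle bundle $P \subset \mathcal{L}^*$. Sections of $\mathcal{L}^k$ correspond to $k$-equivariant functions on $P$ under the natural circle action, and the holomorphic sections are precisely the image of the Szeg\H{o} projector $\Pi$ on $\mathrm{L}^2(P)$, which decomposes as $\Pi = \bigoplus_k \Pi_k$ under the corresponding Fourier decomposition. By the Boutet de Monvel--Sj\"ostrand theorem, $\Pi$ is an FIO with complex phase whose Schwartz kernel near the diagonal takes the form
$$\Pi(x,y) \sim \int_0^{\infty} e^{\ii t \psi(x,y)} a(x,y,t)\, \mathrm{d}t,$$
where $\psi$ has non-negative imaginary part, vanishes exactly along the diagonal, and is determined by the Hermitian metric on $\mathcal{L}$. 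This provides the analytic foundation for everything that follows.

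First I would prove closure under products. Given $T_k = \Pi_k f(\cdot,k) + R_k$ and $S_k = \Pi_k g(\cdot,k) + R_k'$, the composition $T_k S_k$ equals $\Pi_k f \Pi_k g \Pi_k$ modulo $\mathcal{O}(k^{-\infty})$. Substituting the Boutet de Monvel--Sj\"ostrand kernel and applying complex stationary phase in the fibre variable produces an operator of the form $\Pi_k h(\cdot,k) + R_k''$ with $h(\cdot,k) \sim h_0 + k^{-1} h_1 + \cdots$, where $h_0 = f_0 g_0$ and the higher $h_j$ are given by bidifferential operators evaluated at the diagonal. This simultaneously shows closure under product, proves that $\sigma_{\contravariant}$ is multiplicative for the induced product, and identifies $*_{\contravariant}$ as a deformation whose antisymmetric part at order $\hbar$ is the Poisson bracket, a standard consequence of the explicit form of $\psi$ along the diagonal.

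Next, for the norm-versus-symbol characterization, I would combine two ingredients. The easy direction is the bound $\|\Pi_k f \Pi_k\| \leq \|f(\cdot,k)\|_{\infty}$, so if $\sigma_{\contravariant}(T_k) = \mathcal{O}(\hbar^{\ell})$ one may inductively rewrite $T_k$ so that its multiplicator starts at order $k^{-\ell}$, giving $\|T_k\| = \mathcal{O}(k^{-\ell})$. The converse uses coherent states: for each $m \in M$ there is a normalized peak section $e_m^k \in \mathcal{H}_k$ concentrated at $m$, and a direct computation with the Szeg\H{o} kernel yields $\langle T_k e_m^k, e_m^k \rangle = f_0(m) + \mathcal{O}(k^{-1})$. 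If $\|T_k\| = \mathcal{O}(k^{-1})$ this forces $f_0 \equiv 0$, and one iterates after subtracting $\Pi_k f_0$ from $T_k$. Surjectivity of $\sigma_{\contravariant}$ is immediate by Borel resumming any prescribed formal series as the multiplicator $f(\cdot,k)$.

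The main technical obstacle is controlling the remainders in the complex stationary phase expansion: after truncating at order $N$, one needs that the error is again of the form $\Pi_k r_N(\cdot,k) \Pi_k$ modulo $\mathcal{O}(k^{-\infty})$ with $r_N$ a classical symbol of order $k^{-N-1}$, rather than merely a bounded operator of that norm. This requires combining the microlocal structure of $\Pi$ on $P$ with the off-diagonal Gaussian decay of the Szeg\H{o} kernel in an inductive argument; it is the analytic heart of the Boutet de Monvel--Guillemin symbol calculus for generalized Toeplitz operators, and once it is in place the algebraic and norm statements assemble as above.
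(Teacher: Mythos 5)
The paper itself does not prove Theorem~\ref{sc_algebra}: it imports it verbatim from \cite{Ch2006a} (Theorem~1.2 there), so there is no internal proof to compare against. Your sketch follows essentially the same route as that reference and the surrounding Berezin--Toeplitz literature --- the Szeg\H{o} projector on the unit circle bundle described by the Boutet de Monvel--Sj\"ostrand parametrix, complex stationary phase for closure under products and for the star-product structure (with $f_0g_0$ as leading term and the Poisson bracket in the antisymmetric part at order $\hb$), coherent states for the norm-versus-symbol equivalence, and Borel summation for surjectivity --- and you correctly single out the genuine analytic work, namely showing that the truncated stationary-phase remainders are again of Toeplitz form with symbol control rather than merely norm-bounded, which is exactly the content of the Boutet de Monvel--Guillemin/Charles machinery; the only cosmetic difference is that Charles phrases the kernel asymptotics directly on $M\times M$ via a section of $\mathcal{L}\boxtimes\bar{\mathcal{L}}$ rather than equivariantly on the circle bundle.
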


We call the formal series $$\si _{\contravariant} (T_k)= f_0 + \hb f_1
+ \ldots $$ the \emph{contravariant symbol of} $(T_k)_{k \in \mathbb{N}^*}$. The first coefficient
$f_0$ is the {\em principal symbol} of $(T_k)_{k \in \mathbb{N}^*}$. The {\em subprincipal
  symbol} of $(T_k)_{\in \mathbb{N}^*}$ is the function
$$  g_1 = f_1 + \frac{1}{2} \Delta f_0,$$
where $\Delta$ is the holomorphic Laplacian of $M$. 

Consider two
Toeplitz operators with principal and subprincipal symbols $g_0, g_1$
and $g_0'$, $g_1'$ respectively. Then the principal and subprincipal
symbol of their composition is
$$ g_0'' + \hb g_1'' = (g_0 + \hb g_1)(g'_0 + \hb g'_1) + \frac{\hb }{2i} \{ g_0 , g_0' \} + O(\hb^2),$$ 
where $\{ \cdot, \cdot \}$ is the Poisson bracket of $(M, \om)$
(cf. Theorem 1.4 of \cite{Ch2006b}).

The Kostant-Souriau operators considered in Section
\ref{sec:quantum-model} are Toeplitz operators. More generally, let
$f$ be a function of $M$ with Hamiltonian vector field $X$. Applying
the Tuynman's trick (\cite{Tu1987}), one proves that the sequence
$$ T_k := \Pi_k \biggl( f + \frac{1}{{\rm i}k} \nabla_X  \biggr) , \qquad k \in \mathbb{N}^* $$
is a Toeplitz operator with principal symbol $f$ and subprincipal
symbol $- \frac{1}{2} \Delta f$.

\subsection*{Normal Form}

Recall that for each Delzant polytope $\De \subset \R^n$, we
introduced in Section \ref{sec:model-sympl-toric} a symplectic toric
manifold $(M_{\De},\, \om_{\De},\, \R^n/ \Z^n,\, \mu_{\De})$, a
complex structure $j_{\De}$ on $M_{\De}$ compatible with
$\om_{\De}$. Assume that $\De + c$ has integral vertices for some $c
\in \R^n$, so that $(M_{\De}, \om_{\De})$ has a prequantum bundle
${\mathcal{L}}_{\De}$ (unique up to isomorphism). We defined in
Section \ref{sec:quantum-model} for any positive $k$, commuting
operators $T_{1,k}^{\De}, \ldots, T_{n,k}^{\De}$ acting on the Hilbert
spaces $\mathcal{H}_k^\Delta = {\rm H}^0 (M_{\De} ,
{\mathcal{L}}^k_{\De}),\, k \in \Z _{>0}$, and described explicitly
their spectrum in Corollary \ref{cor:quantum-model}.

\begin{theorem}[Global normal form for a quantum toric system]
  \label{theo:normal-form}
  Let $(M, \, \omega, \,\R^n/ \Z^n, \, \mu )$ be a symplectic toric
  manifold equipped with a prequantum bundle ${\mathcal{L}}$ and a
  compatible complex structure $j$.  Denote by $\Delta$ the momentum
  polytope $\mu (M) \subset \R^n$. Let $T_1,\dots, T_n$ be commuting
  Toeplitz operators of $\mathscr{T} (M,{\mathcal{L}} , j)$ whose
  principal symbols are the components of $\mu$.

  Then there exists $k_0>0$, there exists a sequence
  $(g(\cdot;k))_{k\geq k_0}$ of smooth maps $\mathbb{R}^n \to
  \mathbb{R}^n$, and there exists an operator
  $U=(U_k:\mathcal{H}_k\to\mathcal{H}_k^\Delta)_{k\geq k_0}$ with
  $U_k$ invertible for any $k$, such that
$$
U_k(T_{1,k},\,\ldots,T_{n,k})U_k^{-1}=g(T_{1,k}^{\Delta},\, \ldots,
T^{\Delta}_{n,k}; k) + \mathcal{O}(k^{-\infty}).
$$
Moreover, $g$ admits an asymptotic expansion in the $\Cinf$ topology
of the form
\[
g(\cdot;k) =\textup{Id}+k^{-1}g_1+k^{-2}g_2+\cdots.
\]
If the operators $T_j$ are self-adjoint (i.e. for any $k$, $T_{j,k}$
is a self-adjoint operator), then $U_k$ may be chosen such that $U_k
\, U_k^*=\mathrm{Id}_{\mathcal{H}_k^\Delta}$.
\end{theorem}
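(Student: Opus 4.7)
The plan is in three stages: reduce to the Delzant model via a symplectic isomorphism, identify the two quantum spaces coming from different complex structures via an intertwiner, and then construct $g$ and $U_k$ by a semiclassical Birkhoff normal form procedure.

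\textbf{Stage 1.} By Theorem~\ref{theo:delzant} there is a symplectomorphism $\varphi\colon M\to M_\Delta$ with $\varphi^*\mu_\Delta=\mu$; by Proposition~\ref{prop:prequantization} this lifts to an isomorphism of prequantum bundles. Transporting all data through $\varphi$, I reduce to the situation $(M,\omega,\mu,\mathcal{L})=(M_\Delta,\omega_\Delta,\mu_\Delta,\mathcal{L}_\Delta)$, with the only residual freedom being the compatible complex structure $j$, which in general differs from the Delzant structure $j_\Delta$.

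\textbf{Stage 2.} Let $\mathcal{H}_k={\rm H}^0(M_\Delta,\mathcal{L}_\Delta^k,j)$ and $\mathcal{H}^\Delta_k={\rm H}^0(M_\Delta,\mathcal{L}_\Delta^k,j_\Delta)$, sitting inside the same ${\rm L}^2$-space. I construct an intertwiner $W_k\colon\mathcal{H}_k\to\mathcal{H}^\Delta_k$ as the composition of the inclusion into ${\rm L}^2$ with the orthogonal projector onto $\mathcal{H}^\Delta_k$, invoking the calculus of Toeplitz operators (cf.\ the works cited for Theorem~\ref{sc_algebra}) to show that $W_k$ is invertible for $k$ large and that conjugation by $W_k$ maps $\mathscr{T}(M_\Delta,\mathcal{L}_\Delta,j)$ into $\mathscr{T}(M_\Delta,\mathcal{L}_\Delta,j_\Delta)$ preserving principal symbols; a polar decomposition provides a unitary version when needed. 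Setting $S_{i,k}:=W_k T_{i,k} W_k^{-1}$ produces commuting Toeplitz operators on $\mathcal{H}^\Delta_k$ with principal symbols equal to the components of $\mu_\Delta$, so $S_{i,k}-T^\Delta_{i,k}$ is Toeplitz of order $k^{-1}$.

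\textbf{Stage 3.} By Corollary~\ref{cor:quantum-model}, $T^\Delta_{1,k},\dots,T^\Delta_{n,k}$ have simple joint spectrum indexed by $\Delta\cap(v+\tfrac{2\pi}{k}\Z^n)$, so the functional calculus $g(T^\Delta;k)$ is well defined by multiplication by $g(\ell;k)$ on each joint eigenline. I build inductively smooth functions $g_{i,N}\colon\R^n\to\R$ and Toeplitz correctors $C_{k,N}\in\mathscr{T}(M_\Delta,\mathcal{L}_\Delta,j_\Delta)$ so that, with $U^{(N)}_k:=\exp(k^{-N}C_{k,N})\cdots\exp(k^{-1}C_{k,1})\,W_k$,
\[
U^{(N)}_k\,T_{i,k}\,(U^{(N)}_k)^{-1}=T^\Delta_{i,k}+\sum_{p=1}^{N-1}k^{-p}g_{i,p}(T^\Delta_{1,k},\dots,T^\Delta_{n,k})+O(k^{-N}).
\]
At the step $N\to N+1$, the principal symbol $r_i$ of the $k^{-N}$-remainder Poisson-commutes with each $\mu_{\Delta,j}$ (because both the conjugated $S_{i,k}$ and the already-chosen functional calculus of $T^\Delta$ form commuting families), so by effectiveness of the toric action it descends to a function on $\Delta$, yielding $g_{i,N}$. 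The remaining off-diagonal piece is killed by conjugating with $\exp(k^{-N}C_{k,N+1})$, where $C_{k,N+1}$ solves a cohomological equation $[C_{k,N+1},T^\Delta_{j,k}]=\mathrm{RHS}$ that is solvable in the Toeplitz algebra thanks to the simplicity and $\tfrac{2\pi}{k}$-separation of the joint spectrum of $T^\Delta$. A Borel summation then assembles the $g_{i,N}$'s into smooth maps with the announced $C^\infty$-asymptotic expansion and the $U^{(N)}_k$'s into $U_k$.

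\textbf{Main obstacle.} The heart of the argument lies in Stage~3: one must check that the inductive remainder has a $T$-invariant principal symbol, which requires the effectiveness of the torus action together with a precise account of the symbolic calculus at each order, and then that the coboundary equation against the commuting family $T^\Delta$ is solvable in the Toeplitz algebra with the correct symbol estimates. The self-adjoint case follows by choosing $W_k$ unitary via polar decomposition (its defect $W_k W_k^*-\mathrm{Id}$ is already $O(k^{-\infty})$) and each $C_{k,N}$ anti-self-adjoint; the symbols $r_i$ are then automatically real, hence so are the $g_{i,N}$, and $U_k$ is unitary.
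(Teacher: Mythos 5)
There is a genuine gap, and it sits exactly where the real work of the theorem lies. In Stage~3 you assert that the principal symbol $r_i$ of the $k^{-N}$-remainder Poisson-commutes with every $\mu_{\Delta,j}$ ``because both families commute''. This is false in general: writing $S_i=G_i+k^{-(N+1)}R_i$ with $G_i$ the functional calculus of the $T^\Delta_j$'s, the identities $[S_i,S_j]=0$ and $[G_i,G_j]=0$ only give, at leading order, the symmetric compatibility condition $\{\mu_i^\Delta,r_j\}=\{\mu_j^\Delta,r_i\}$, not $\{\mu_j^\Delta,r_i\}=0$. (If the $r_i$ really were torus-invariant there would be nothing left for your conjugation to kill at that order.) The actual mechanism, which your proposal skips, is to average: one shows (Lemma~\ref{lemma:M}) that the averaged symbols $M_jr_j$ are invariant, defines $g_{i,N}$ from them, and removes the non-invariant part by solving the symbol-level cohomological equation \eqref{equ:cohomological}, $\{\mu_j^\Delta,a\}=r_j-g_j\circ\mu_\Delta$, globally via the Moser--Eliasson operators $P_j,M_j$; the corrector is then simply a Toeplitz quantization of $a$, so no operator-level coboundary equation $[C,T^\Delta_{j,k}]=\mathrm{RHS}$ and no division by the $\tfrac{2\pi}{k}$ spectral gaps is ever needed. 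You flag the solvability of that operator equation ``in the Toeplitz algebra with the correct symbol estimates'' as the main obstacle and leave it unproven; the paper's symbol-level route avoids it entirely, and your spectral-gap argument as stated loses a power of $k$ per division and gives no control ensuring $C_{k,N}$ is Toeplitz.

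A second, equally essential omission: even after averaging, ``descends to a function on $\Delta$'' is only set-theoretic. What the statement requires is a \emph{smooth} map $g_{i,N}:\RM^n\to\RM^n$, and smoothness fails to be automatic precisely at the critical values of $\mu_\Delta$ (faces and vertices of $\Delta$, where the action has isotropy). This is the content of Lemma~\ref{lemma:g}, proved via the global Eliasson-type Lemma~\ref{lemma:smooth} in Delzant coordinates $(|z_1|^2,\dots,|z_k|^2,I_1,\dots,I_\ell)$; nothing in your effectiveness argument substitutes for it. Your Stages~1--2 are an acceptable variant of the paper's Step~1 (the paper quantizes the symplectomorphism directly as a Fourier-integral-type operator following \cite{Ch2003a}, whereas you compose with the projector between the two K\"ahler polarizations), but note that your parenthetical claim $W_kW_k^*-\mathrm{Id}=\mathcal{O}(k^{-\infty})$ is not correct when $j\neq j_\Delta$ — $W_k^*W_k$ is a Toeplitz operator whose symbol is generally not $1$ — although the polar decomposition you invoke does repair unitarity and preserves principal symbols, so this part is a misstatement rather than a fatal flaw.
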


\begin{remark}
  For small $k$, the dimensions of $\mathcal{H}_k$ and
  $\mathcal{H}_k^\Delta$ might be
  different. Theorem~\ref{theo:normal-form} does not give information
  about small values of $k$.
\end{remark}

The proof of Theorem~\ref{theo:normal-form} will require the following
technical lemma, which is a global version of a result of
Eliasson~\cite[Corollary page 14]{El1990}.  Recall that in the case
where $E$ is a closed half-space, $h\in\Cinf(\R^n_x\times E)$ if and
only if all the partial derivatives $\partial_x^k\partial_e^\ell
h(x,e)$ for $(x,e)\in \RM^n\times \mathring{E}$ have a limit at every
point in $\RM^n\times E$. (This is equivalent to saying that $g$ has a
smooth extension in a neighborhood of any point.)
\begin{lemma}\label{lemma:smooth}
  Let $E$ be a vector space or a closed half-space. Let
  $f\in\Cinf(\R^2_{(x,\xi)}\times E)$, and let
  $$q(x,\xi,e)=x^2+\xi^2.$$ Assume that $\{q,f\}=0$ (here the Poisson
  bracket refers to the symplectic variables $(x,\xi)$). Then there
  exists $g\in\Cinf(\RM_{\{\geq 0\}}\times E)$ such that
    $$f(x,\xi,e)
    = g(q(x,\xi,e),e).
    $$
  \end{lemma}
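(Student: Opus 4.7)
The plan is to reduce the lemma to a parametric version of the classical fact that a smooth $SO(2)$-invariant function on $\R^2$ is a smooth function of $r^2 = x^2 + \xi^2$. First I would note that the Hamiltonian flow of $q(x,\xi) = x^2 + \xi^2$ on $\R^2$ (with $e$ as an inert parameter) is rotation by angle $2t$, so the hypothesis $\{q,f\}=0$ means that $f(\cdot,\cdot,e)$ is $SO(2)$-invariant for each $e \in E$. The lemma therefore amounts to producing $g \in C^\infty(\R_{\geq 0} \times E)$ such that $f(x,\xi,e) = g(x^2+\xi^2,e)$.

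Next I would build a smooth even extension. Define $F : \R \times E \to \R$ by averaging in angle: $F(r,e) = \tfrac{1}{2\pi}\int_0^{2\pi} f(r\cos\theta, r\sin\theta, e)\, d\theta$. Because $f$ is already rotation-invariant, this averaging does not change it on $r \geq 0$ (and in fact $F(r,e) = f(r,0,e)$ there), but the averaged integrand is smooth jointly in $(r,\theta,e) \in \R \times S^1 \times E$, so $F$ is smooth on $\R \times E$, including when $E$ is a closed half-space. Moreover the integrand at $(-r,\theta,e)$ equals the integrand at $(r,\theta+\pi,e)$, so $F$ is even in $r$.

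Finally, I would invoke the parametric even-function-is-a-smooth-function-of-$r^2$ lemma: any $F \in C^\infty(\R \times E)$ with $F(-r,e) = F(r,e)$ can be written as $g(r^2,e)$ with $g \in C^\infty(\R \times E)$, and one then restricts to $\R_{\geq 0} \times E$. This last step is proved by iterating Hadamard's lemma: evenness forces $F(r,e) - F(0,e) = r^2 H_1(r^2,e) + \ldots$, and combined with Borel summation of the Taylor series in $r^2$ at $r=0$ together with a standard flat-remainder division argument, one produces the desired $g$. The main obstacle is verifying smoothness of $g$ jointly at $\{r=0\}$ and, in the half-space case, up to $\partial E$; this is handled by observing that every step of the even-to-$r^2$ procedure (Hadamard divisions and Borel summation) is a linear construction that preserves smooth dependence on auxiliary parameters ranging in a half-space, since $C^\infty$ on a closed half-space is intrinsically characterised by one-sided derivatives extending continuously.
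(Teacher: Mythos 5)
Your proposal is correct, but it is organized differently from the paper's argument. The paper never reduces to one variable: it works directly in the $(x,\xi)$ variables, noting that invariance forces the $(x,\xi)$-Taylor series of $f$ to be a series $\sum_k q^k a_k(e)$, then for each finite order $r$ writes $f = P_r(q,e)+q^r\phy(x,\xi,e)$ and checks by direct differentiation that $g(t,e)=P_r(t,e)+t^r\phy(\sqrt t,0,e)$ has all derivatives of order $\leq r$ extending continuously to $t=0$ (and to $\partial E$); since $r$ is arbitrary, $g\in\Cinf(\R_{\geq 0}\times E)$. So the paper trades your ``all orders at once'' statement for a finite-order bookkeeping argument, and in particular needs no Borel summation and no appeal to Whitney's theorem. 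Your route --- restrict to the line $\xi=0$ (lossless by rotation invariance), observe evenness in $r$, and invoke a parametric version of Whitney's even-function theorem proved by Hadamard division, Borel summation of the even Taylor series, and the flat-remainder-under-$\sqrt t$ argument --- is a valid alternative and has the advantage of reducing to a classical, citable result; the cost is that you must justify the parametric ingredients, notably Borel summation with coefficients $a_k\in\Cinf(E)$ for a possibly non-compact half-space $E$ (harmless, since smoothness is local in $e$, so one may work over a compact neighborhood), and the usual chain-rule estimate showing that a remainder flat at $r=0$ stays $\Cinf$ and flat after the substitution $r=\sqrt t$. Two small points of hygiene: the angular averaging is superfluous ($F(r,e)=f(r,0,e)$ is already smooth and even), and your displayed Hadamard step should read $F(r,e)=F(0,e)+r^2H_1(r,e)$ with $H_1$ smooth and \emph{even} --- it only becomes a function of $r^2$ after the full iteration/Borel argument, exactly the point your last paragraph supplies.
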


\begin{proof}
  Set-theoretically, there is a unique such function $g$. We need to
  prove that $g$ is smooth.  The Taylor expansion of $f$ in the
  $(x,\xi)$ variables has to commute with $q$. This implies that it
  has the form $$\sum_{k\geq 0} q^k a_k(e),$$ where
  $a_k\in\Cinf(E)$. Hence by the Taylor formula, for any integer
  $r\geq 0$, there is a polynomial $P_r$ in $q$ with coefficients in
  $\Cinf(E)$, and a smooth function $\phy\in\Cinf(\R^2\times E)$ such
  that
  \[
  f = P_r(q,e) + q^r \phy(x,\xi,e).
  \]
  Thus we get
  \[
  g(t,e) = P_r(t,e) + t^r \phy(\sqrt{t},0,e).
  \]
  When $t>0$, we simply compute the partial derivatives
  $\partial_t^k\partial_e^\ell g(t,e)$. They have a limit as
  $(t,e)\to(0,e)$ as long as $k\leq r$. Thus $g\in \op{C}^r(\R_{\geq
    0}\times E)$, which proves the lemma.
\end{proof}

\begin{proof}[Proof of Theorem \ref{theo:normal-form}]
  We divide the proof into several steps.

\paragraph{{\bf Step 1.}}
By Theorem \ref{theo:delzant}, there exists a
symplectomorphism $\phy:M\to M_\Delta$ such that
$\mu=\phy^*\mu_\Delta$. Since $M_\Delta$ is simply connected, the
prequantum bundle $\mathcal{L}_\Delta$ is unique up to
isomorphism. Hence $\phy$ can be lifted to a prequantum bundle
isomorphism $\mathcal{L}\to\mathcal{L}_\Delta$. So $\phy$ can be
quantized as an operator $U_k:\mathcal{H}_k\to \mathcal{H}_k^\Delta$
such that $U_k U_k^*=I_k$ for large $k$ and such that for any Toeplitz
operator $S=S_k$ with principal symbol $s$, $U_kS U_k^*$ is a Toeplitz
operator whose principal symbol is $s\circ\phy^{-1}$.

The operators $U_k$ that we use here have been introduced
in~\cite[Chapter 4]{Ch2003a} and similar ones have been considered
by~\cite{Zelditch1997}.  They are analogues of Fourier integral
operators \cite{Ho1971, DuHo1972}.

Replacing $T_j$ by $U T_j U^{*}$ we see that the problem is reduced to
the case where $T_1,\dots, T_n$ are commuting Toeplitz operators on
$(M_\Delta,\mathcal{L}_\Delta)$, with joint principal symbol equal to
$\mu_\Delta$.

\vspace{5mm}

\paragraph{{\bf Step 2.}}
We now prove the theorem by induction.  Assume that, for some
$N\in\NM$, we have
\begin{equation}
  (T_1,\dots,T_n) = g^{(N)}(T_1^\Delta,\dots,T_n^\Delta;k) + 
  k^{-(N+1)}R_{N+1},
  \label{equ:normal-form-induction}
\end{equation}
where $R_{N+1}$ is a vector of $n$ Toeplitz operators and $g^{(N)}$ is
polynomial in $k$:
\[
g^{(N)}=\textup{Id} + k^{-1}g_1+\cdots k^{-N} g_N,
\]
and each $g_j:\RM^n\to\RM^n$ is a smooth map. For simplicity we write
$$G_N:= g^{(N)}(T_1^\Delta,\dots,T_n^\Delta;k).$$

Notice that, for $N=0$, this is precisely the result of Step 1.  We
wish to prove that there exists an invertible Toeplitz operator
$U=(U_k)_{k\in\NM}$ such that
\begin{equation}
  U(T_1,\dots,T_n)U^{-1} = g^{(N)}(T^\Delta;k) +
  k^{-(N+1)}h_{N+1}(T^\Delta) + k^{-(N+2)}R_{N+2}.
  \label{equ:normal-form-induction-step}
\end{equation}
The procedure is standard and we only indicate the key points.  It
turns out that the case $N=0$ is slightly different from the other
cases $N>0$. When $N=0$, we plug~\eqref{equ:normal-form-induction} in
the left-hand side of~\eqref{equ:normal-form-induction-step} and
multiply on the right by $U$, and obtain
\[ [U,T^\Delta] + k^{-1}UR_1 = k^{-1}G_1U \mod k^{-2}\mathscr{T}.
\]
Since both sides of the equation are Toeplitz operators of order 1,
the equation is equivalent to the equality of the principal symbols~:
\[
\frac{1}i\{u,\mu_\Delta\} + ur_1 = g_1(\mu_\Delta)u.
\]
Writing $u$ of the form $u=\op{e}^{ia}$ we get the equation
\[
\{\mu_\Delta,a\} = r_1 - g_1(\mu_\Delta).
\]

For $N\geq 1$ we look for $U$ in the form $U_k=\textup{Id} +
{\rm i}k^{-N}A_N \mod k^{-(N+1)}\mathscr{T}$, where $A_N$ is a Toeplitz
operator. The same calculation as before gives the equation
\[
{\rm i}k^{-N} [A_N,G_N] + k^{-(N+1)}(R_{N+1} - K_{N+1}) = 0 \mod k^{-(N+2)}
\mathscr{T}.
\]
(We use here $2N+1\geq N+2$ in order to eliminate higher order terms.)
Since $$G_N=(T_1^\Delta,\dots T_n^\Delta)+\O(1),$$ the equation is
equivalent to the following equation on the principal symbols~:
\[
\{\mu_\Delta,a_N\} = r_{N+1} - h_{N+1}(\mu_\Delta).
\]

\vspace{5mm}

\paragraph{{\bf Step 3.}}
In order to complete the induction, we need to solve the following
cohomological equation, where the unknown functions are $a$ and
$g_j$, $1 \leq j \leq n$~:
\begin{equation}
  \{\mu_j^\Delta, a\}  = r_j - g_j\circ \mu_\Delta, \qquad 1 \leq j \leq n.
  \label{equ:cohomological}
\end{equation}
The proof follows Eliasson's local argument in~\cite[Lemma 8]{El1990},
where he uses a formula due to Moser. Here we show that this local
argument also works globally.

For any smooth function $r$ on $M_\Delta$, we define
\[
M_j r = \int_0^1 r\circ \phy_j^t \, {\rm d}t, \,\,\,\,\,\,\,\,\,\,\,\,\,
P_j r := \int_0^1 t r\circ \phy_j^t \, {\rm d}t.
\]
$M_j$ and $P_j$ are clearly linear operators sending $\Cinf(M_\Delta)$
into itself. Notice that, since the flows $\phy_j$ pairwise commute,
the Fubini formula ensures that $M_j$ and $P_k$ commute for any
$j,k$. The following Poisson bracket is easy to compute~:
\begin{align*}
  \{\mu_j^\Delta, P_j r\} &= \int_0^1t\{\mu_j^\Delta,r\circ\phy_j^t\}
  \, {\rm d}t= \int_0^1t\{\mu_j^\Delta\circ\phy_j^t,r\circ\phy_j^t\}
  \, {\rm d}t =
  \int_0^1t\{\mu_j^\Delta,r\}\circ\phy_j^t \, {\rm d}t\\
  & = \int_0^1t\frac{{\rm d}}{ {\rm d}t}(r\circ\phy_j^t) \, {\rm d}t =
  r - \int_0^1 r\circ\phy_j^t = r - M_j r.
\end{align*}

We shall need the following lemmas~.

\begin{lemma}
  \label{lemma:M}
  Let $r_1,\dots,r_n$ be smooth functions on $M_\Delta$ such that for
  all $i$, $j$,
  \[
  \qquad \{\mu_i^\Delta, r_j\} = \{\mu_j^\Delta, r_i\},
  \]
  then for all $1 \leq i,\,j \leq n$, we have that $\{M_jr_j, \mu_i^\Delta\} = 0$.
\end{lemma}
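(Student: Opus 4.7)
The plan is to compute the Poisson bracket $\{\mu_i^\Delta, M_j r_j\}$ directly by commuting the bracket past the integral, using the symmetry hypothesis, and finally exploiting the periodicity of the Hamiltonian flow $\phy_j^t$.

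First, since the components $\mu_1^\Delta,\dots,\mu_n^\Delta$ Poisson-commute, the Hamiltonian flow $\phy_j^t$ of $\mu_j^\Delta$ preserves $\mu_i^\Delta$ for every $i$. Consequently, for any smooth function $r$ on $M_\Delta$,
\[
\{\mu_i^\Delta, r\circ\phy_j^t\} = \{\mu_i^\Delta,r\}\circ\phy_j^t.
\]
Applying this pointwise under the integral defining $M_j$, and using the symmetry hypothesis $\{\mu_i^\Delta,r_j\}=\{\mu_j^\Delta,r_i\}$, I would obtain
\[
\{\mu_i^\Delta, M_j r_j\} = \int_0^1 \{\mu_i^\Delta,r_j\}\circ \phy_j^t\, {\rm d}t = \int_0^1 \{\mu_j^\Delta,r_i\}\circ\phy_j^t\, {\rm d}t.
\]

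The key observation is then that the integrand is a total derivative: as in the computation preceding the lemma,
\[
\{\mu_j^\Delta, r_i\}\circ\phy_j^t = \frac{{\rm d}}{{\rm d}t}(r_i\circ\phy_j^t).
\]
Integrating from $0$ to $1$ yields $r_i\circ\phy_j^1 - r_i\circ\phy_j^0$. Since $\phy_j^t$ is periodic of period $1$, we have $\phy_j^1 = \phy_j^0 = \operatorname{id}$, so the right-hand side vanishes identically. This gives $\{\mu_i^\Delta, M_j r_j\}=0$ for all $i,j$, as required. The case $i=j$ works identically (the symmetry hypothesis is trivially satisfied for $i=j$, and the flow invariance of $\mu_j^\Delta$ under its own flow is automatic).

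There is no real obstacle here; the point of the argument is simply to notice that the symmetry assumption allows us to replace the $i$-indexed Hamiltonian by the $j$-indexed one inside the average $M_j$, which is precisely what turns the integrand into an exact derivative along the periodic orbit. This is the global counterpart of Eliasson's local computation, and it works verbatim here because the flows $\phy_j^t$ are globally defined and $1$-periodic on the compact toric manifold $M_\Delta$.
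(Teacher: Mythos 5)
Your argument is correct and coincides with the paper's proof: both compute $\{\mu_i^\Delta, M_j r_j\}$ by moving the bracket inside the integral, using that $\phy_j^t$ preserves $\mu_i^\Delta$, invoking the symmetry hypothesis to replace $\{\mu_i^\Delta,r_j\}$ by $\{\mu_j^\Delta,r_i\}$, and integrating the resulting exact derivative over the period-$1$ flow. No issues.
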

\begin{lemma}
  \label{lemma:g}
  Let $f\in\Cinf(M)$ such that for all $1 \leq i \leq n$ we have that
  $\{\mu_i^\Delta,f\}=0$. Then there exists $g\in\Cinf(\RM^n)$ such
  that
  $
  f = g \circ \mu_\Delta.
  $
\end{lemma}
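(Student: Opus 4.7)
The plan is to descend $f$ through the momentum map and extract smoothness by working locally in Delzant charts, reducing matters to the auxiliary Lemma~\ref{lemma:smooth} already established.

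First, set-theoretically there is a unique function $g$ on $\Delta = \mu_\Delta(M_\Delta)$ such that $f = g \circ \mu_\Delta$: the hypothesis $\{\mu_i^\Delta, f\} = 0$ means $f$ is invariant under each Hamiltonian flow $\varphi_{\mu_i^\Delta}^t$, hence under the whole torus action $\R^n/\Z^n$ on $M_\Delta$; since the fibers of $\mu_\Delta$ are exactly the (connected) torus orbits, $f$ factors through $\mu_\Delta$. The whole content of the lemma is therefore smoothness of $g$, and since $\Delta$ has boundary, we must produce a smooth extension to a neighborhood of $\Delta$ in $\R^n$ (and then to all of $\R^n$ by a Seeley-type extension, or by multiplying with a cutoff).

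Next, smoothness is a local question on $\Delta$, and here I would invoke the standard Delzant/equivariant Darboux normal form. Pick any point $m \in M_\Delta$ whose image $\mu_\Delta(m) = p$ lies in a face of codimension $k$ (with $0 \le k \le n$); in an equivariant symplectic chart around $m$ one has coordinates $(z_1,\dots,z_k, \theta_{k+1}, I_{k+1}, \dots, \theta_n, I_n)$ in which $\omega_\Delta = \frac{1}{2\pi}\sum dx_j\wedge dy_j + \sum dI_j \wedge d\theta_j$ and, up to an affine map $A \in \mathrm{GL}(n,\Z)\ltimes \R^n$,
\[
\mu_\Delta = A\bigl(\tfrac{|z_1|^2}{2},\dots,\tfrac{|z_k|^2}{2},I_{k+1},\dots,I_n\bigr) + p.
\]
In such a chart the Poisson-commutation $\{\mu_i^\Delta, f\} = 0$ says that $f$ is independent of the $\theta_j$ and rotation-invariant in each $z_j$. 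Applying Lemma~\ref{lemma:smooth} once in each of the $k$ symplectic planes $(x_j,y_j)$, with the remaining variables playing the role of the parameter $e$ (a half-space parameter after the first application), one obtains a smooth function $h \in C^\infty(\R_{\geq 0}^k \times \R^{n-k})$ with
\[
f = h\bigl(\tfrac{|z_1|^2}{2},\dots,\tfrac{|z_k|^2}{2},I_{k+1},\dots,I_n\bigr).
\]
Composing with the smooth affine map $A$, this precisely says that $g$ agrees, in a neighborhood of $p$ in $\R^n$ (not just in the corner $\Delta$!), with a smooth $\R^n \to \R$ extension.

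Finally I would patch: covering $\Delta$ by such local charts, the local smooth extensions of $g$ agree on overlaps because they all coincide with the set-theoretic $g$ on $\Delta$, and $\Delta$ has non-empty interior in $\R^n$, on which a smooth function is determined by its restriction. This gives a well-defined $g \in C^\infty(U)$ on an open neighborhood $U$ of $\Delta$; a standard smooth extension produces $g \in C^\infty(\R^n)$. The only real work is the local normal form step, which is however classical for toric manifolds; the rest is bookkeeping and an invocation of Lemma~\ref{lemma:smooth}, which is precisely why that lemma was stated with a half-space parameter~$E$ (so that it can be iterated at a codimension-$k$ face).
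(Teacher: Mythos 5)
Your argument is essentially the paper's own proof: factor $f$ set-theoretically through $\mu_\Delta$ using torus invariance and connectedness of the fibers, pass to Delzant (equivariant Darboux) coordinates near a point over a codimension-$k$ face so that $\mu_\Delta$ becomes, up to an integral affine map, $(|z_1|^2/2,\dots,|z_k|^2/2,I_{k+1},\dots,I_n)$, and then iterate Lemma~\ref{lemma:smooth} once per symplectic plane, with the half-space form of the parameter $E$ absorbing the previously treated $|z_j|^2$ variables — exactly as in the paper (which treats regular values separately via action-angle, a cosmetic difference since your $k=0$ case covers it). The one step where your justification is off is the patching: two smooth local extensions of $g$ need not ``agree on overlaps'' just because they coincide on $\Delta$ and $\Delta$ has interior — smooth functions that agree on a nonempty open set need not agree on the whole connected overlap, so this literal claim is false. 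It is, however, harmless: the local statements already give that $g$ admits a smooth extension near every point of $\Delta$, and a global $g\in\Cinf(\R^n)$ is then obtained by gluing the local extensions with a partition of unity subordinate to the cover of $\R^n$ by the chart neighborhoods and the complement of $\Delta$ (the paper is equally terse here, asserting only that smoothness on each small ball yields a smooth extension to $\R^n$). With that one-line repair your proof is complete and coincides with the paper's.
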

\begin{proof}[Proof of Lemma~\ref{lemma:M}]
We have that
  \begin{align*}
    \{\mu_i^\Delta, M_jr_j \} &= \int_0^1 \{ \mu_i^\Delta,
    r_j\circ\phy_j^t\} \, {\rm d}t = \int_0^1 \{ \mu_i^\Delta,
    r_j\}\circ\phy_j^t \, {\rm d}t =\int_0^1 \{\mu_j^\Delta,
    r_i\}\phy_j^t \,
    {\rm d}t\\
    &= \int_0^1 \frac{{\rm d}}{{\rm d}t}(r_i\circ \phy_j^t) \, {\rm
      d}t= 0,
  \end{align*}
  as desired.
\end{proof}
\begin{proof}[Proof of Lemma~\ref{lemma:g}]
  Since $f$ is invariant by the action, set-theoretically, there
  exists unique function $g$ such that $f = g \circ \mu_\Delta$. We
  want to prove that $g$ is smooth.

  At a regular value of $\mu_\Delta$, this follows directly from the
  action-angle theorem.  Let $c$ be a critical value of $\mu_\Delta$,
  and let $C$ be a small ball around $c$. Let
  $(z_1,\dots,z_k,I_1,\theta_1,\dots, I_\ell,\theta_\ell)\in
  \C^k\times ({\rm T}^*S^1)^\ell$, with $k+\ell=n$ be Delzant coordinates on
  $(\mu_\Delta)^{-1}(C)$. Up to an affine transformation, we can
  assume
  $$
  \mu_\Delta=(\abs{z_1}^2/2,\dots,\abs{z_k}^2/2, I_1,\dots,I_\ell).
  $$

  By assumption, $f$ does not depend on the $\theta_j$ coordinates, so
  there is a smooth function $g_0$ such that
  $f=g_0(z_1,\dots,z_k,I_1,\dots,I_\ell)$.

  We apply Lemma~\ref{lemma:smooth} to the function $g_0$ with
  $(x,\xi)=z_1$ and $E=\C^{k-1}\times \RM^\ell$. Thus there is a
  smooth function $g_1$ such that
 $$f=g_1(\abs{z_1}^2,z_2,\dots,z_k,I_1,\dots, I_\ell).$$ We may now
  apply the same lemma to
  $$f_1(z,e)=g(e_1,z,e_2,\dots,e_k,e_{k+1},\dots, e_{n})$$ with
  $E=\RM_{\geq 0} \times \CM^{k-1}\times \RM^\ell$ and get a smooth
  function $g_2$ such that
$$
f=g_2(\abs{z_1}^2,\abs{z_2}^2,z_3,\dots,z_k,I_1,\dots, I_\ell).
$$
We may repeat the argument and finally obtain a smooth function $g_k$
such that
$$
f=g_k(\abs{z_1}^2,\dots,\abs{z_k}^2,I_1,\dots, I_\ell).
$$

This proves that $g$ is smooth in $C$. Thus $g$ is smooth on
$\mu_\Delta(M_\Delta)$ (which means that there is a smooth extension
of $g$ in $\RM^n$).
\end{proof}

We return now to the cohomological equation~\eqref{equ:cohomological}.
By Lemmas~\ref{lemma:M} and~\ref{lemma:g}, there exist smooth
functions $g_j$ on $\RM^n$ such that $M_j r_j= g_j\circ \mu_\Delta$.
Let
\[
a = P_1r_1 + P_2 M_1 r_2 + P_3 M_2 M_1 r_3 + \cdots P_n M_{n-1}\cdots
M_1 r_n.
\]
Notice that for any function $h$, $\{\mu_j^\Delta, M_j h\}=0$. Hence,
since the operators $M_j$ and $P_k$ commute, we get
\[
\{\mu_1^\Delta,a\} = \{\mu_1^\Delta,P_1 r_1\} = r_1 - M_1 r_1 =
g_1\circ \mu_\Delta,
\]
so $a$ solves the first equation of \eqref{equ:cohomological}.  Let
$$\tilde{a}=a-P_1r_1.$$ We have $\{\mu_1^\Delta,\tilde{a}\}=0$, and our
system becomes
\begin{equation}
  \{\mu_j^\Delta, \tilde{a}\}  = \tilde{r}_j - g_j\circ \mu_\Delta, \qquad 
  j=1,\dots,n.
  \label{equ:cohomological2}
\end{equation}
with
\begin{align*}
  \tilde{r}_j& :=r_j-\{\mu_j^\Delta,P_1r_1\} = r_j - \int_0^1
  t\{\mu_j^\Delta,r_1\circ\phy_1^t\}\, {\rm d}t = r_j - \int_0^1 t
  \{\mu_1^\Delta,r_j\}\circ \phy_1^t \, {\rm d}t\\
  &= r_j - \int_0^1 t \frac{{\rm d}}{{\rm d}t}(r_j\circ\phy_1^t)\,
  {\rm d}t = M_1 r_j.
\end{align*}
We notice that $$\tilde{a}= P_2 \tilde{r}_2 + P_3 M_2 \tilde{r}_3 +
\cdots P_n M_{n-1}\cdots M_2 \tilde{r}_n,$$ so by induction
$\tilde{a}$ solves the complete
system~\eqref{equ:cohomological2}. Thus $a$
solves~\eqref{equ:cohomological}.

The construction we have used to solve~\eqref{equ:cohomological} do
not require the functions $r_j$ to be real-valued. In case they are
real-valued, then $a$ and $g_j$ will be real-valued as well, and in
Step 2 we may choose $U_k=\exp(ik^{-N}A_N)$, which is unitary.

\vspace{5mm}

\paragraph{{\bf Step 4.}}

From steps 2 and 3 we obtain, for any positive integer $N$, an
invertible operator $U_N=(U_{N,k})_{\geq 0}$ (which is unitary in the
case of self-adjoint operators $T_j$) and a smooth map
$$g^{(N)}=\textup{Id} + k^{-1}g_1+\cdots k^{-N} g_N$$ such that
\begin{equation}
  U_N (T_1,\dots,T_n) U_N^{-1}= g^{(N)}(T_1^\Delta,\dots,T_n^\Delta;k)
  + k^{-(N+1)}R_{N+1},
  \label{equ:conjugationN} 
\end{equation}
where $R_{N+1}$ is a Toeplitz operator, and $U_N$ is of the form
\[
U_N = U^{(N)}U^{(N-1)}\cdots U^{(0)}.
\]

From step 3, we have $$U^{(j)}=\mathrm{Id}+ {\rm i}k^{-j}A_j \mod
k^{-j}\mathscr{T},$$ 
for $j\geq 1$. Therefore, one can construct by
induction a sequence of symbols $\tilde{a}_N$ such that for all $N\geq
1$, the operator $U^{(N)}U^{(N-1)}\cdots U^{(1)}$ is, modulo
$k^{-(N+1)}\mathscr{T}$, the Toeplitz quantization of the symbol
$$1+{\rm i}k^{-1}\tilde{a}_1 + \cdots + {\rm i}k^{-N}\tilde{a}_N.$$  By the Borel
summation procedure, one can find a Toeplitz operator $\tilde{A}$
whose total symbol has the asymptotic expansion
$$
\tilde{a}_1 + k^{-1}\tilde{a}_2+ \cdots + k^{-N+1}\tilde{a}_N+ \cdots.
$$
Moreover, one can find a smooth map $\tilde{g}$ that admits the
asymptotic expansion $$g^{(N)}=\textup{Id} + k^{-1}g_1+\cdots + k^{-N}
g_N+\cdots.$$ Now we let $\tilde{U}=(I+ik^{-1}\tilde{A})U^{(0)}$, so
that for any $N$, $$\tilde{U} = U_N \mod k^{-(N+1)}\mathscr{T}.$$ Thus
from~\eqref{equ:conjugation} we get, as required~:
\[
\tilde{U}(T_{1},\,\ldots,T_{n})\tilde{U}^{-1}=\tilde{g}(T_{1}^{\Delta},\,
\ldots, T^{\Delta}_{n}) + \mathcal{O}(k^{-\infty}).
\]

In the case where the operators $T_j$ are self-adjoint, one can change
the construction of the sequence $\tilde{a}_j$ in such a way that
$U^{(N)}U^{(N-1)}\cdots U^{(1)}$ is, modulo $k^{-(N+1)}\mathscr{T}$,
the \emph{exponential} of the Toeplitz quantization of the symbol
${\rm i}k^{-1}\tilde{a}_1 + \cdots + {\rm i}k^{-N}\tilde{a}_N$. Then we define
$$\tilde{U}:=\exp({\rm i}k^{-1}\tilde{A})U^{(0)},$$ which is unitary.
\end{proof}

\section{Isospectrality} \label{sec:inverse}

In this section we prove Theorem \ref{theo:spectral}
and Corollary \ref{theo:inverse-spectral}.

Recall that the \emph{joint
  spectrum of} of $n$ commuting matrix $A_1,\ldots,A_n$ is the set of
$(\lambda_1,\dots,\lambda_n)\in\CM^n$ such that there exists a
non-zero vector $v$ for which $A_j v = \lambda_j v$ for all
$j=1,\dots,n$. Such an $n$-uple $(\lambda_1,\dots,\lambda_n)$ will be
called a \emph{joint eigenvalue}.
We begin with the following elementary observations.

\begin{lemma} \label{lem:variational1}Let $B_1,\, \ldots,\,B_n$ be
  commuting self\--adjoint matrices.  Let $\epsilon>0$, and let $u \in
  \mathbb{R}^k \setminus \{0\}$ be such that $\|B_i \, u \|\leq
  \epsilon$ for all $1 \leq i \leq n$. Then there exists
  $\lambda\in\CM^n$ such that
$$
\lambda \in \op{JointSpec}(B_1,\, \ldots,\,B_n) \cap [-\epsilon,\,
\epsilon]^n.
$$
\end{lemma}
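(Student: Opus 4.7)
My plan is to exploit the spectral theorem for commuting self-adjoint matrices: the $B_i$ can be simultaneously diagonalized, and the hypothesis $\|B_i u\| \leq \epsilon$ then constrains $u$'s components in the joint eigenbasis directly, forcing some joint eigenvalue to be small in every coordinate.

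First I would invoke the spectral theorem to produce an orthonormal basis $v_1, \ldots, v_k$ of $\R^k$ consisting of joint eigenvectors, with $B_i v_j = \lambda_i^{(j)} v_j$ and joint eigenvalue $\lambda^{(j)} := (\lambda_1^{(j)}, \ldots, \lambda_n^{(j)}) \in \R^n$ (real by self-adjointness of the $B_i$). By construction,
$$
\op{JointSpec}(B_1, \ldots, B_n) = \{\lambda^{(1)}, \ldots, \lambda^{(k)}\}.
$$
Next, writing $u = \sum_j c_j v_j$ with not all $c_j$ vanishing, the orthonormality of $(v_j)$ yields, for each coordinate $i = 1, \ldots, n$,
$$
\|B_i u\|^2 \,=\, \sum_j |c_j|^2 \bigl(\lambda_i^{(j)}\bigr)^2 \,\leq\, \epsilon^2.
$$

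Finally I would extract the desired joint eigenvalue by a pigeonhole argument applied to the probability distribution $(|c_j|^2/\|u\|^2)_j$ on the index set. If every $\lambda^{(j)}$ with $c_j \ne 0$ had some coordinate of modulus exceeding $\epsilon$, one could partition these indices among the $n$ coordinates according to which coordinate is large, and then add the $n$ displayed bounds to contradict the control on $\|B_i u\|$. This yields a $j_0$ such that $|\lambda_i^{(j_0)}| \leq \epsilon$ simultaneously for all $i$, i.e. $\lambda^{(j_0)} \in \op{JointSpec}(B_1, \ldots, B_n) \cap [-\epsilon, \epsilon]^n$, as desired.

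The whole proof reduces to simultaneous diagonalization together with an elementary combinatorial estimate, so no substantive obstacle is anticipated. The one delicate point is the combinatorial step: one has to track the normalization of $u$ carefully so that the final cube is $[-\epsilon,\epsilon]^n$ as stated, rather than some enlargement by a dimensional factor such as $\sqrt n$ that would come from naively summing the bounds $\|B_i u\|^2 \leq \epsilon^2$ over $i$.
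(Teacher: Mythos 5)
Your final pigeonhole step is where the argument breaks, and it cannot be repaired to give the cube $[-\epsilon,\epsilon]^n$ exactly. Carry out the partition you propose: let $S_i$ be the set of indices $j$ with $c_j\neq 0$ for which the chosen ``large'' coordinate is $i$, so that $|\lambda_i^{(j)}|>\epsilon$ for $j\in S_i$. Your displayed identity then gives, for each $i$, $\epsilon^2\geq\|B_iu\|^2\geq\sum_{j\in S_i}|c_j|^2(\lambda_i^{(j)})^2\geq\epsilon^2\sum_{j\in S_i}|c_j|^2$, and summing over $i$ yields only $\|u\|^2\leq n$ --- no contradiction, since the lemma assumes nothing about $\|u\|$ beyond $u\neq 0$. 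In fact the conclusion with the exact cube is false even for unit $u$: take $n=k=2$, $B_1=\diag(a,0)$, $B_2=\diag(0,a)$ with $a=\tfrac{7}{5}\epsilon$, and $u=(1,1)/\sqrt{2}$; then $\|B_iu\|=a/\sqrt{2}\leq\epsilon$ for $i=1,2$, while the joint spectrum $\{(a,0),(0,a)\}$ misses $[-\epsilon,\epsilon]^2$. So the dimensional loss you were explicitly trying to avoid is unavoidable (and for $\|u\|$ small the statement fails trivially, which shows the intended hypothesis is a normalized one, $\|u\|=1$ or $\|B_iu\|\leq\epsilon\|u\|$, exactly as it is used in Lemma~\ref{lemm:variational2}).

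What your simultaneous diagonalization does prove --- and all that is actually needed downstream --- is the $\sqrt{n}$-enlarged version: if every $\lambda^{(j)}$ with $c_j\neq0$ had some coordinate of modulus exceeding $\sqrt{n}\,\epsilon/\|u\|$, then $\sum_i\|B_iu\|^2\geq\sum_j|c_j|^2\bigl(\lambda^{(j)}_{i(j)}\bigr)^2>n\epsilon^2$, contradicting $\|B_iu\|\leq\epsilon$; hence some joint eigenvalue lies in $[-\sqrt{n}\epsilon/\|u\|,\sqrt{n}\epsilon/\|u\|]^n$. This is also precisely what the paper's one-line proof gives: the variational characterization of the extreme eigenvalues of $C=(B_1^2+\cdots+B_n^2)^{1/2}$ shows its smallest eigenvalue is at most $\|Cu\|/\|u\|\leq\sqrt{n}\epsilon/\|u\|$, and since the eigenspaces of $C$ are invariant under the commuting $B_i$, they contain a joint eigenvector, whose joint eigenvalue then has Euclidean norm at most $\sqrt{n}\epsilon/\|u\|$. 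The discrepancy with the stated cube is harmless for the paper, because Lemma~\ref{lemm:variational2} and the proof of Theorem~\ref{theo:spectral} only require bounds of the form $C(n)\epsilon$, and constants depending only on $n$ are absorbed into the $\mathcal{O}(k^{-N})$ estimates. So either weaken your conclusion to the $\sqrt{n}$-cube (or add the hypothesis $\|u\|\geq\sqrt{n}$), or argue through $C$ as the paper does; as written, the claimed contradiction does not materialize.
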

This lemma follows from the usual variational characterization of the
largest eigenvalue a matrix, applied to the self\--adjoint operator
$C:=(B_1^2+\cdots + B_n^2)^{\frac12}$.  It has the following immediate
consequences~:
\begin{lemma}\label{lemm:variational2}
The following statements hold.
  \begin{enumerate}[{\rm (i)}]
  \item If $B_1,\, \ldots,\,B_n$ are commuting self\--adjoint
    matrices, and $\alpha=(\alpha_1,\dots,\alpha_n)\in\CM^n$ is such
    that $$\|(B_i-\alpha_i)\,u\| \leq \epsilon \|u\|$$ for all $1 \leq i
    \leq n$, then there exists a joint eigenvalue $\lambda \in
    \op{JointSpec}(B_1,\, \ldots,\,B_n)$ such that $|
    \lambda_i-\alpha_i | \le \epsilon$ for all $1 \leq i \leq n$.
  \item Suppose that $A_1,\, \ldots,\,A_n$ is another collection of
    commuting self\--adjoint matrices, and assume
    \[ \|B_i-A_i\| \leq \epsilon \qquad \textup{for all}\,\, 1 \leq i \leq n,
    \]
    Then the Hausdorff distance between $\op{JointSpec}(A_1,\,
    \ldots,\, A_n)$ and $\op{JointSpec}(B_1,\, \ldots,\, B_n)$ is at
    most $\epsilon$, i.e.
    \[ {\rm d}_H\Big( \op{JointSpec}(A_1,\, \ldots,\, A_n), \,
    \op{JointSpec}(B_1,\, \ldots,\, B_n)\Big) \leq \epsilon.
    \]
  \end{enumerate}
\end{lemma}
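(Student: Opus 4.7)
The plan is to derive both statements from Lemma \ref{lem:variational1} by translation, using that the joint spectrum of commuting self\--adjoint matrices consists of real $n$\--tuples and that such a family admits a simultaneous orthonormal eigenbasis. Statement (ii) will then follow from (i) via a standard quasimode/eigenvector comparison argument.

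For (i), the idea is to reduce to the setting of Lemma \ref{lem:variational1} by translating the operators $B_i$. If $\alpha \in \RM^n$, the commuting self\--adjoint operators $B_i' := B_i - \alpha_i$ satisfy $\|B_i' u\| \leq \epsilon \|u\|$, so Lemma \ref{lem:variational1} applied to $(B_1',\dots,B_n')/\|u\|$ produces a joint eigenvalue of $(B_i')$, which by translation is $\lambda_i - \alpha_i$ for a joint eigenvalue $\lambda$ of $(B_i)$, and this eigenvalue lies in $[-\epsilon,\epsilon]^n$ (up to an irrelevant factor of $\sqrt{n}$ arising from squaring and summing, which does not affect the applications). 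If $\alpha \in \CM^n$, I would first note the elementary identity
\[
\|(B_i - \alpha_i) u\|^2 = \|(B_i - \re \alpha_i) u\|^2 + |\im \alpha_i|^2 \|u\|^2,
\]
valid because $B_i$ is self\--adjoint. This forces $|\im \alpha_i| \leq \epsilon$ and $\|(B_i - \re \alpha_i) u\| \leq \epsilon \|u\|$, so the previous real case yields a joint eigenvalue $\lambda$ with $|\lambda_i - \re \alpha_i|$ controlled by $\epsilon$, and hence with $|\lambda_i - \alpha_i|$ controlled by $\epsilon$ as well.

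For (ii), the plan is to let $\alpha \in \op{JointSpec}(A_1,\dots,A_n)$ with joint eigenvector $u \neq 0$; then
\[
(B_i - \alpha_i) u = (B_i - A_i) u, \qquad \text{so}\qquad \|(B_i - \alpha_i) u\| \leq \|B_i - A_i\| \, \|u\| \leq \epsilon \|u\|.
\]
Applying part (i) produces a joint eigenvalue $\lambda$ of $(B_1,\dots,B_n)$ with $\|\lambda - \alpha\| \leq \epsilon$ (in the norm used to define the Hausdorff distance). Since $\alpha \in \op{JointSpec}(A_1,\dots,A_n)$ was arbitrary, this gives the inclusion $\op{JointSpec}(A_1,\dots,A_n) \subseteq \op{JointSpec}(B_1,\dots,B_n)_\epsilon$. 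Interchanging the roles of the $A_i$ and $B_i$ yields the reverse inclusion, and the bound on the Hausdorff distance follows.

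The only real subtlety is the handling of complex $\alpha$ in (i), which is dealt with by the real/imaginary splitting above; the bookkeeping of absolute constants between the norms $\|\cdot\|_2$ and $\|\cdot\|_\infty$ on $\RM^n$ is harmless in all subsequent applications, where Hausdorff distances are only estimated up to $\mathcal{O}(k^{-\infty})$.
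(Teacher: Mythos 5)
Your proposal is correct and follows essentially the same route as the paper: part (i) is reduced to Lemma~\ref{lem:variational1} by translating to $B_i-\alpha_i I$, and part (ii) follows by applying (i) to a joint eigenvector of the $A_i$'s and symmetrizing. Your extra care with non-real $\alpha_i$ (the real/imaginary splitting, using self-adjointness of $B_i$) and your remark about the harmless $\sqrt{n}$ loss in constants are refinements the paper's terse proof glosses over, but they do not change the argument.
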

\begin{proof}
  The first statement is obtained from Lemma~\ref{lem:variational1}
  applied to $B_j-\alpha_j I$. It implies that if $\alpha=(\alpha_1,\,
  \ldots,\, \alpha_n)$ is a joint eigenvalue of $(A_1,\, \ldots,\,
  A_n)$ and $\|B_i-A_i\| \leq \epsilon$ for all $1 \leq i \leq n$,
  then there exists a joint eigenvalue $\lambda \in
  \op{JointSpec}(B_1,\, \ldots,\, B_n)$ with $| \lambda_i-\alpha_i |
  \leq \epsilon$ for all $1 \leq i \leq n$ (and vice-versa), which
  gives the last statement.
\end{proof}

If $T_1,\, \ldots,\, T_n$ are pairwise commuting Toeplitz
operators, we call \emph{joint spectrum of $T_1,\ldots,T_n$} the
sequence of joint spectra of the set of commuting matrices
$(T_{1,k},\dots,T_{n,k})$ acting on the Hilbert space $\mathcal{H}_k$.

\begin{proof}[Proof of Theorem~\ref{theo:spectral}]
  By Theorem~\ref{theo:normal-form} applied to $(T_1,\dots,T_n)$, we
  get an integer $k_0>0$, a sequence $(g(\cdot;k))_{k\geq k_0}$ of
  smooth maps $\mathbb{R}^n \to \mathbb{R}^n$, and an operator
  $U=(U_k:\mathcal{H}_k\to\mathcal{H}_k^\Delta)_{k\geq k_0}$ with
  $U_k$ unitary, such that
  \begin{equation}
    U_k(T_{1,k},\,\ldots,T_{n,k})U_k^{-1}=g(T_{1,k}^{\Delta},\, \ldots,
    T^{\Delta}_{n,k}; k) + \mathcal{O}(k^{-\infty}).
    \label{equ:conjugation}
  \end{equation}
  Let $S_k:=\JointSpec(T_{1},\dots,T_n)$. If we introduce the
  components of $g$,
  \[
  g=:(g_1,\dots,g_n),
  \]
  then by Corollary \ref{cor:quantum-model}, the joint spectrum of the commuting Toeplitz
  operators
  \[
  \left(g_1(T_{1,k}^{\Delta},\, \ldots, T^{\Delta}_{n,k}; k), \dots,
    g_n(T_{1,k}^{\Delta},\, \ldots, T^{\Delta}_{n,k}; k)\right).
  \]
is $\Si_k := g(( v + \frac{2 \pi}{k} \ZM^n ) \cap \Delta, k )$.
 
Equation~\eqref{equ:conjugation} means that there exists a sequence
  $(C_N)_{N\in\NM}$ of real numbers such that
  \begin{equation*}
    \textup{for all}\,\, N, \,\,\,\, \textup{for all}\,\, k \qquad \norm{U_k
      (T_{1,k},\,\ldots,T_{n,k})U_k^{-1} - g(T_{1,k}^{\Delta},\, \ldots,
      T^{\Delta}_{n,k}; k)} \leq C_N k^{-N}.\label{equ:estimate}
  \end{equation*}
  Then, by virtue of Lemma \ref{lemm:variational2},
  we have 
  \begin{equation} \label{eq:estimate} \text{for all } N, \text{ for
      all } k, \qquad {\rm d}_H(S_k,\Sigma_k) \leq C_N k^{-N}
  \end{equation}
  which mean by definition
  $
  S_k= \Si_k + \mathcal{O}(k^{-\infty})$
  as we wanted to show.
\end{proof}

\begin{proof}[Proof of Corollary~\ref{theo:inverse-spectral}] 
Recall that we defined the limit of a sequence $ ( \mathcal{A}_k) _{k \in \mathbb{N}}$  of subsets of $\RM^n$ by 
$$
\lim \mathcal{A}_k:=\Big\{c \in \mathbb{R}^n \,\,\, |\,\, \, \forall U
\,\textup{neighborhood of}\, c, \exists k_0\, \textup{such that}\,
\forall k \ge k_0,\, U \cap \mathcal{A}_k \neq \emptyset \Big\},
$$
We denote by $\mathrm{B}(c,r)$ the open ball in $\RM^n$ centered at
$c$ and of radius $r$.  Notice that if $\mathcal{A}_k \subset
\mathcal{B}_k + \mathrm{B}(0,Ck^{-1})$, for some constant $C$, then
$\lim \mathcal{A}_k\subset \lim\mathcal{B}_k$.

We use the same notation as in the proof of Theorem \ref{theo:spectral}:
$
S_k=\JointSpec(T_{1},\dots,T_n)$ and 
$$
\Sigma_k = g(( v + \frac{2 \pi}{k} \ZM^n ) \cap \Delta, k).
$$

\paragraph{{\bf Step 1.}} The estimate~\eqref{equ:estimate} for $N=1$ gives
\[
S_k\subset \Sigma_k + \mathrm{B}(0,Ck^{-1}) \quad \text{ and
} \quad \Sigma_k\subset S_k + \mathrm{B}(0,C k^{-1}).
\]
Therefore, $\lim S_k = \lim \Sigma_k$.

\paragraph{{\bf Step 2.}}  Now let us show that $\lim \Sigma_k = \Delta$. From
Theorem~\ref{theo:normal-form}, we know that $g$ admits an asymptotic
expansion in the $\Cinf$ topology of the form
\[
g(\cdot;k) =\textup{Id}+k^{-1}g_1+k^{-2}g_2+\cdots.
\]
Therefore, for any compact $K\subset\RM^n$, there exists a constant
$C$ such that
\[
\max_{c\in K}\norm{g(c;k) - c}_{\RM^n}\leq Ck^{-1}.
\]
We may choose $K$ large enough so that it contains $\Delta$, and we
get the following inclusions~:
\begin{eqnarray} \label{eq:last} \Sigma_k\subset \Delta\cap \Bigl( v + \frac{2 \pi}{k}  \ZM^n \Bigr) + \mathrm{B}(0,Ck^{-1}) \quad \text{ and } \quad
  \Delta\cap \Bigl( v + \frac{2 \pi}{k} \ZM^n \Bigr) \subset \Sigma_k + \mathrm{B}(0,Ck^{-1}).
\end{eqnarray}
Hence $$\lim \Sigma_k = \lim{\Delta\cap ( v + \frac{2 \pi}{k} \ZM^n)}=\Delta.$$

From steps 1 and 2 we conclude that $\lim S_k = \Delta =
\mathrm{De}(M,\omega,\mu)$, which finishes the proof.
\end{proof}

\begin{cor}[Isospectrality]
  Two symplectic toric systems are isomorphic if and only if the limit of their joint spectra
  coincide. In particular, if two symplectic toric systems have the
  same joint spectra, then they are isomorphic.
\end{cor}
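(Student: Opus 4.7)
The plan is to reduce this corollary immediately to two results already established in the paper: the Isospectral Theorem (Corollary \ref{theo:inverse-spectral}) and Delzant's classification (Theorem \ref{theo:delzant}). There is essentially no new analytic content; the point is simply to string these two facts together.

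First I would address the nontrivial direction. Suppose $(M,\omega,\mu)$ and $(M',\omega',\mu')$ are symplectic toric systems, each equipped with a prequantum bundle and a compatible complex structure, and with commuting Toeplitz operators $T_1,\dots,T_n$ and $T_1',\dots,T_n'$ whose principal symbols are the components of $\mu$ and $\mu'$ respectively. By Corollary \ref{theo:inverse-spectral}, the Hausdorff limit of the joint spectrum of $(T_1,\dots,T_n)$ is exactly $\Delta := \mu(M)$, and similarly $\lim_k \operatorname{JointSpec}(T_1',\dots,T_n') = \Delta' := \mu'(M')$. The hypothesis that these two limits coincide therefore forces $\Delta = \Delta'$. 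Delzant's Theorem \ref{theo:delzant} then yields isomorphisms $(M,\omega,\mu) \cong (M_\Delta,\omega_\Delta,\mu_\Delta) = (M_{\Delta'},\omega_{\Delta'},\mu_{\Delta'}) \cong (M',\omega',\mu')$, proving one implication.

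The converse is even more direct: if $(M,\omega,\mu)$ and $(M',\omega',\mu')$ are isomorphic via some symplectomorphism $\varphi$ with $\varphi^*\mu' = \mu$, then in particular $\mu(M) = \mu'(M')$, so $\Delta = \Delta'$, and Corollary \ref{theo:inverse-spectral} applied to each side gives $\lim_k \operatorname{JointSpec}(T_1,\dots,T_n) = \Delta = \Delta' = \lim_k \operatorname{JointSpec}(T_1',\dots,T_n')$. Finally the ``in particular'' clause is immediate: if the two sequences of joint spectra agree for every $k$, then of course their Hausdorff limits coincide, so the preceding argument applies.

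The only step that might require a cautionary remark (rather than a genuine obstacle) is to verify that two different choices of commuting Toeplitz operators lifting the same momentum map yield the same Hausdorff limit of joint spectra; but this is contained in Corollary \ref{theo:inverse-spectral}, whose conclusion $\lim_k \operatorname{JointSpec}(T_1,\dots,T_n) = \Delta$ depends only on $(M,\omega,\mu)$ and not on the specific quantizing operators, prequantum bundle, or compatible complex structure chosen. Hence the statement is well-posed and the proof reduces to the two-line chain of implications above.
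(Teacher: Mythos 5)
Your proof is correct and is exactly the argument the paper intends: the corollary is stated as an immediate consequence of Corollary \ref{theo:inverse-spectral} (the limit of the joint spectrum is the momentum polytope, independently of the choice of quantization) combined with Delzant's Theorem \ref{theo:delzant}, and your chain of implications, including the easy converse and the ``in particular'' clause, matches that reasoning. No gaps.
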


\section{Metaplectic correction}
\label{sec:half-form}

Theorem \ref{theo:spectral} and Theorem \ref{theo:inverse-spectral}
can be proven analogously to our proofs in the presence of a half form
bundle. Below we explain the corresponding modifications needed.

Toeplitz quantization is often considered more natural in presence of
a half-form bundle. Let $(M, \om , j )$ be a compact K{\"a}hler
manifold. A half-form bundle of $(M,j)$ is a square root of the
canonical bundle of $M$. More precisely we consider a pair $(\delta,
\varphi)$ consisting of a complex line bundle $\delta \rightarrow M$
and an isomorphism $$\varphi : \delta^{\otimes 2} \rightarrow
\wedge^{n,0} {\rm T}^*M.$$ Here $n$ is the complex dimension of $M$. Such a
square root does not necessarily exists, and if it exists, the space
of half-form bundles up to isomorphism is a principal homogeneous
space for the group ${\rm H}^1 ( M, \Z / 2 \Z)$.

Let $\mathcal{L} \rightarrow M$ be a prequantum bundle and $(\delta,
\varphi)$ be a half-form bundle. Observe that $\delta$ has a metric
and a holomorphic structure determined by the condition that $\varphi$
is an isomorphism of Hermitian holomorphic bundles. Define the quantum
space $\Hilb_{\op{m},k}$ as the vector space of holomorphic sections of $\mathcal{L}^k
\otimes \delta$. The space $\Hilb_{\op{m},k}$ has a natural scalar product obtained by
integrating the point wise scalar product of sections of $\mathcal{L}^k \otimes
\delta$ against the Liouville measure. This scalar product is actually
defined on the space of ${\rm L}^2$ sections, and we have an orthogonal
projector $\Pi_k$ from the space of ${\rm L}^2$ sections onto $\Hilb_{\op{m},k}$. The
definition of Toeplitz operators is the same as before except that we
use this new projector. So a Toeplitz operator is any family $(T_k
\colon \Hilbert_{\op{m},k} \rightarrow \Hilbert_{\op{m},k} )_{k\in \mathbb{N}^*}$ of
endomorphisms of the form
\begin{eqnarray*}
  T_k = \Pi_k f(\cdot,\, k) + R_k , \qquad k\in \mathbb{N}^*
\end{eqnarray*}
where $f(\cdot,\,k)$, viewed as a multiplication operator, is a
sequence in $\Cinf( M)$ with an asymptotic expansion 
$$f_0 + k^{-1} f_1
+ \ldots$$ for the $\Cinf$ topology, and the norm of $R_k$ is
$\mathcal{O}(k^{-\infty})$. Theorem \ref{sc_algebra} still holds and
we define the principal and subprincipal symbols of a Toeplitz
operators with the same formula as before. The rule of composition of
these symbols is still given by (\ref{eq:comp_symbol}). We can also
define Toeplitz operators by using the Kostant-Souriau
formula. Consider a smooth function $f$ on $M$ whose Hamiltonian
vector field $X$ preserves the complex structure. Then the following
operators are well defined
\begin{gather} \label{eq:comp_symbol} T_k = f + \frac{1}{{\rm i}k} (
  \nabla_{X}^{\mathcal{L}^k} \otimes {\rm Id} + {\rm Id} \otimes \op{L}^\delta_X ) :
  \Hilb_{\op{m},k} \rightarrow \Hilb_{\op{m},k}, \qquad k \in \mathbb{N}^*.
\end{gather}
Here $\op{L}^\delta$ is the Lie derivative of half-form, that is the
first order differential operator such that for any local section $s$
of the half-form bundle one has $$ \op{L}_X ( \varphi ( s^{\otimes 2}))
= 2 \varphi ( s \otimes \op{L}^\delta_X s ).$$ One also shows that the
family $( T_k)$ is a Toeplitz operator with principal symbol $f$ and
vanishing subprincipal symbol.

Consider now the Delzant space $M_{\Delta}$ defined as in Section
\ref{sec:model-sympl-toric}. Since $M_\Delta$ is simply connected,
there exists at most one half-form bundle.

Recall that we denote by $F$ the set of faces of codimension 1 and for
each $f \in F$, $X_f \in \frak{t} $ is the primitive normal vector to
the face.
\begin{prop} \label{prop:existence-half-form} The Delzant space
  $M_{\Delta}$ admits an equivariant half-form bundle if and only if
  there exists $\ga \in \frak{t}_{\Z}^*$ such that $ \ga ( X_f)$ is
  odd for any $f \in F$.
\end{prop}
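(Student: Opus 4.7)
The plan is to adapt the Delzant reduction strategy used in Proposition \ref{prop:prequantization} to the canonical bundle and its square roots, and then translate the resulting condition into one on $\mathfrak{t}_{\mathbb{Z}}^*$. In a first step I would compute the equivariant canonical bundle of $M_{\Delta}$ as the descent of a simple line bundle on $\mathbb{C}^F$. The canonical bundle $K_{\mathbb{C}^F}$ is trivial and has a natural $\mathbb{T}^F$-equivariant structure: $t=(t_f)_{f\in F}$ acts on the top holomorphic form $\bigwedge_{f \in F} \mathrm{d} z_f$ as multiplication by the character $\chi(t) = e^{2\pi \mathrm{i} \sum_f t_f}$. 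Writing $M_{\Delta}$ as the GIT quotient $U/N_{\mathbb{C}}$ of the stable locus $U \subset \mathbb{C}^F$, and exploiting the short exact sequence $0 \to T_{\operatorname{vert}} U \to TU \to \pi^* TM_\Delta \to 0$ of $N_{\mathbb{C}}$-equivariant bundles together with triviality of the adjoint action (since $N$ is abelian), one identifies $K_{M_\Delta}$ with the reduction of $\mathbb{C}^F \times \mathbb{C}$ endowed with the $N$-action $t \cdot (z,u) = (t \cdot z, \chi(t) u)$.

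Next, I would reformulate existence of an equivariant half-form bundle as existence of a character square-root of $\chi$ on $N$. Writing $N = \mathfrak{n}/\mathfrak{n}_{\mathbb{Z}}$ (with $\mathfrak{n}_{\mathbb{Z}} = \mathfrak{n} \cap \mathbb{Z}^F$ as in Section \ref{sec:model-sympl-toric}), the natural half-character $\psi(t) = e^{\pi \mathrm{i} \sum_f t_f}$ is always well defined on the Lie algebra $\mathfrak{n}$, and it descends to the torus $N$ if and only if it is trivial on $\mathfrak{n}_{\mathbb{Z}}$, that is,
\[
\sum_{f \in F} t_f \in 2 \mathbb{Z} \qquad \text{for every } t \in \mathfrak{n}_{\mathbb{Z}}.
\]
Any two square-roots of $\chi$ differ by a $2$-torsion character of $N$, so this is necessary and sufficient.

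Finally I would translate this arithmetic condition into the stated one. Let $\mathbf{1} = (1,\ldots,1) \in (\mathbb{Z}^F)^*$. The condition above asserts that $\mathbf{1} \bmod 2$ vanishes on $\mathfrak{n}_{\mathbb{Z}} \otimes \mathbb{F}_2$. From the short exact sequence
\[
0 \to \mathfrak{n}_{\mathbb{Z}} \to \mathbb{Z}^F \xrightarrow{\pi} \mathfrak{t}_{\mathbb{Z}} \to 0,
\]
which is split by the Delzant condition at any vertex, reducing mod $2$ and dualizing gives exactness of
\[
0 \to (\mathfrak{t}_{\mathbb{Z}}/2)^\vee \xrightarrow{\pi^*} (\mathbb{F}_2)^F \to (\mathfrak{n}_{\mathbb{Z}}/2)^\vee \to 0.
\]
Hence $\mathbf{1} \bmod 2$ vanishes on $\mathfrak{n}_{\mathbb{Z}} \otimes \mathbb{F}_2$ iff it lies in the image of $\pi^*$, iff there exists $\gamma \in \mathfrak{t}_{\mathbb{Z}}^*$ with $\gamma(X_f) = \gamma(\pi(e_f)) \equiv 1 \pmod{2}$ for every $f \in F$, which is the asserted condition.

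The main obstacle is the clean identification of the equivariant structure on $K_{M_\Delta}$ in the first step, where one has to separate the contribution of $K_{\mathbb{C}^F}$ from that of $\det \mathfrak{n}_{\mathbb{C}}^*$ in the formula $K_U = \pi^* K_{M_\Delta} \otimes \det \mathfrak{n}_{\mathbb{C}}^*$; triviality of the $N$-action on $\det \mathfrak{n}_{\mathbb{C}}^*$ (thanks to abelianness) is precisely what makes the character of $K_{M_\Delta}$ equal to $\chi\restriction_N$. All the other steps are essentially a character computation on $N$ combined with standard linear algebra over $\mathbb{F}_2$.
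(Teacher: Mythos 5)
Your proof is correct, but it runs through the quotient construction rather than through the divisor computation the paper uses, so it is worth comparing the two. The paper's proof stays entirely in the language of torus-invariant divisors, citing the standard toric facts that the canonical class is $-\sum_{f\in F}D_f$, that the $D_f$ generate the Picard group, and that $\sum_f n_f D_f$ is principal iff $n_f=\gamma(X_f)$ for some $\gamma\in\mathfrak{t}^*_{\Z}$; a square root then exists iff $2n_f^{\delta}+1=\langle X_f,\gamma\rangle$ is solvable, which is the stated parity condition. You instead identify $K_{M_\Delta}$ equivariantly as the descent of the trivial bundle on the stable locus of $\C^F$ with $N$ acting through the character $t\mapsto e^{\pm 2\pi \mathrm{i}\sum_f t_f}$ (your sign differs from the $e^{-2\pi \mathrm{i}\sum t_f}$ used in the proof of Theorem \ref{theo:ttm}, which is immaterial mod $2$), reduce existence of an equivariant half-form bundle to divisibility of this character by $2$ in $\hat N\cong\mathfrak{n}^*_{\Z}$, and finish with linear algebra over $\mathbb{F}_2$ using the split sequence $0\to\mathfrak{n}_{\Z}\to\Z^F\to\mathfrak{t}_{\Z}\to 0$. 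The two arguments are really dual presentations of the same computation, linked by the isomorphism $\operatorname{Pic}(M_\Delta)\cong\Z^F/\pi^*\mathfrak{t}^*_{\Z}\cong\hat N$, and they lean on the same standard input at the same level of rigor: where the paper cites the divisor description of $\operatorname{Pic}$, your ``only if'' direction needs that every line bundle on $M_\Delta$ (in particular a putative half-form bundle) descends from the trivial bundle on the stable locus with an $N$-linearization by a character, distinct characters giving distinct bundles; this is the Cox-construction computation in the same reference, using that the stable locus has trivial Picard group and only constant invertible functions. Two small remarks: your claim that any two square roots of $\chi\vert_N$ differ by a $2$-torsion character is best completed by noting that $\hat N$ is torsion-free, so a square root, if it exists, is unique and must be the descended half-character $\psi$; and your route has the advantage of matching exactly how the paper later builds $\delta_\Delta$ (via $d\in\Z^F$ with $\pi^*\gamma=\mathbb{I}+2d$ and descent of $\delta_F$) and of making the equivariance in the statement explicit, while the paper's divisor argument is shorter given the cited facts.
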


\begin{proof} This criterion may be established using the divisor of
  the toric variety, cf. \cite{CLS}. To each face $f \in F$
  corresponds an irreducible divisor $D_f$ of $M_\De$. It is known
  that a divisor of the canonical bundle is $- \sum_{f \in F}
  D_f$. Recall also that the divisors $D_f$'s generate the Picard
  group, and that $\sum n_f X_f$ is principal if and only if $n_f =
  \ga ( X_f)$ for some $\ga \in \frak{t}^*_{\Z}$. So $M_{\Delta}$
  admits a half-form bundle if and only if there exists a divisor $$D=
  \sum n^\delta_f D_{f}$$ such that $ 2D + \sum_{f \in F} D_f$ is
  principal, that is
$$ 2 n^\delta_f + 1 =   \langle X_f, \ga \rangle $$ 
for some $\ga \in \frak{t}_{\Z}^*$.
\end{proof}

Assume now that the vertices of $\Delta$ belongs to $2\pi
\got{t}_{\Z}^*$ so that $( M_{\Delta} , \om_{M_{\Delta}})$ admits a
prequantum bundle ${\mathcal{L}}_{\Delta}$ unique up to
isomorphism. Assume also that $M_\Delta$ is equipped with a half-form
bundle $\delta_{\De}$. For any positive integer $k$, define the
quantum space
$$ \Hilb_{\op{m}, k}^{ \Delta} = {\rm H}^0 ( M_{\Delta} , {\mathcal{L}}^k_{\Delta} \otimes \delta_{\De} ).$$  
For any $X \in \mathfrak{t}$, consider the rescaled Kostant-Souriau
operators
\begin{eqnarray}  T_{X,k} := \langle \mu_\Delta , X
  \rangle + \frac{1}{{\rm i}k} \Bigl( \nabla_{X^{\sharp}} \otimes {\rm Id} + {\rm Id}
  \otimes \op{L}^{\delta}_X \Bigr) : \Hilb_{\op{m}, k}^{\Delta}
  \rightarrow \Hilb_{\op{m}, k}^{\Delta}
\end{eqnarray}
We can now state the analogue of Theorem \ref{theo:tt}. Introduce $\ga
\in\frak{t}_{\Z}^*$ such that $\ga ( X_f) $ is odd for any $f\in F$.

\begin{theorem} \label{theo:ttm} There is an orthogonal decomposition of
the quantum space $ \Hilb_{\op{m}, k}^{\Delta}$
  into a direct sum of lines:
$$ \Hilb_{\op{m}, k}^{\Delta}  = \bigoplus_{\ell \in (\frac{2\pi}{k}( \mathfrak{t}_{\Z}^* + \frac{1}{2} \ga )) \cap \Delta}  D_{\ell}^k$$
such that, for any $X \in {\mathfrak{t}}$,
$$ T_{X,k} \Psi = \ell (X) \Psi , \qquad \textup{for all}\,\,\, \Psi \in D_{\ell}^k.$$ 
\end{theorem}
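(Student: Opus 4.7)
The plan is to adapt the proof of Theorem~\ref{theo:tt} to the metaplectic setting by lifting the Delzant construction to the Bargmann model with half-form bundle. I would replace $\mathcal{L}^k$ by $\mathcal{L}^k\otimes\delta$ throughout and track carefully the additional Lie-derivative contribution to the Kostant-Souriau operator.

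First I would set up the Bargmann model on $\mathbb{C}^F$ twisted by half-forms. The canonical bundle of $\mathbb{C}^F$ is trivialized by $\omega_0:=\mathrm{d}z_1\wedge\cdots\wedge \mathrm{d}z_F$, so a half-form bundle $\delta_F$ is trivial with distinguished section $s_0$ satisfying $s_0^{\otimes 2}=\omega_0$. An orthogonal basis of the metaplectic Bargmann space $\mathcal{B}_{\op{m},k}$ is then $\{\psi_\alpha\otimes s_0\}_{\alpha\in\mathbb{N}^F}$, with the same $\psi_\alpha$ as in Section~\ref{sec:quantum-model}. A direct computation from $(t\cdot z)_f=e^{2\pi i t_f}z_f$ gives $\op{L}_{X_f^\sharp}\omega_0=2\pi i\,\omega_0$, hence $\op{L}^\delta_{X_f^\sharp}s_0=\pi i\,s_0$; combining this with (\ref{eq:tx}) yields
$$
S_{X,k}(\psi_\alpha\otimes s_0)\;=\;\Bigl\langle X,\ \tfrac{2\pi}{k}\bigl(\alpha+\tfrac{1}{2}\mathbf{1}\bigr)-\lambda\Bigr\rangle\,\psi_\alpha\otimes s_0,\qquad X\in\mathbb{R}^F,
$$
where $\mathbf{1}=(1,\dots,1)$. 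In particular $N$ acts on $\delta_F$ through the character $e^{\pi i \sum_f t_f}$, which is single-valued on $N$ exactly under the condition of Proposition~\ref{prop:existence-half-form}.

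Next I would characterize the reduced metaplectic quantum space $\mathcal{B}^{\mathfrak{n}}_{\op{m},k}:=\{\psi\in\mathcal{B}_{\op{m},k}:S_{X,k}\psi=0\ \forall X\in\mathfrak{n}\}$, which by the eigenvalue formula above is spanned by those $\psi_\alpha\otimes s_0$ with $\tfrac{2\pi}{k}(\alpha+\tfrac{1}{2}\mathbf{1})-\lambda\in\pi^*(\mathfrak{t}^*)$. The appropriate analogue of Lemma~\ref{lemm:bij} is that $\pi^*+\lambda$ restricts to a bijection
$$
\tfrac{2\pi}{k}\bigl(\mathfrak{t}_{\mathbb{Z}}^*+\tfrac{1}{2}\gamma\bigr)\cap\Delta\;\longrightarrow\;\tfrac{2\pi}{k}\bigl(\mathbb{N}^F+\tfrac{1}{2}\mathbf{1}\bigr)\cap\bigl(\lambda+\ker\iota^*_{\mathfrak{n}}\bigr),
$$
which ultimately reduces to the numerical fact that $\tilde\ell(X_f)\in\mathbb{Z}+\tfrac{1}{2}$ for every $f\in F$ if and only if $\tilde\ell\in\mathfrak{t}_{\mathbb{Z}}^*+\tfrac{1}{2}\gamma$; this uses that $\gamma(X_f)$ is odd and that $\{X_f\}_{f\in F_v}$ is a $\mathbb{Z}$-basis of $\mathfrak{t}_{\mathbb{Z}}$ at each vertex. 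The positivity constraint $\alpha\geq 0$ then translates automatically into $\ell\in\Delta$, since the half-oddness prevents $\ell(X_f)+\lambda_f$ from vanishing at any boundary point.

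The argument is concluded by invoking the half-form version of quantization-commutes-with-reduction. Quotienting $\mathcal{L}_F^k\otimes\delta_F|_Z$ by the twisted action of $N$ produces $\mathcal{L}_\Delta^k\otimes\delta_\Delta$, yielding a unitary isomorphism $\Phi_k\colon\mathcal{B}^{\mathfrak{n}}_{\op{m},k}\to\Hilb_{\op{m},k}^\Delta$ intertwining each $S_{X,k}$ with $T_{\pi(X),k}$. The main obstacle is precisely this half-form Guillemin-Sternberg step: one must verify that reduction of $\delta_F|_Z$ by the character $e^{\pi i\sum_f t_f}$ reproduces the intrinsically defined $\delta_\Delta$ of Proposition~\ref{prop:existence-half-form}, and that the Lie-derivative term on the quotient matches the one appearing in the Kostant-Souriau formula~(\ref{eq:comp_symbol}). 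Granted this compatibility, the orthogonal decomposition and the eigenvalue equation transport from $\mathcal{B}^{\mathfrak{n}}_{\op{m},k}$ to $\Hilb_{\op{m},k}^\Delta$ exactly as in the proof of Theorem~\ref{theo:tt}.
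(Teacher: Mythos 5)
Your proposal follows essentially the same route as the paper's proof: trivialize the canonical bundle of $\CM^F$ by $\bigwedge_f \DD z_f$, compute the half-form shift giving eigenvalues $\tfrac{2\pi}{k}(\alpha+\tfrac12\mathbb{I})-\lambda$, identify the $N$-invariant sections, establish the lattice bijection onto $\tfrac{2\pi}{k}(\mathfrak{t}^*_{\Z}+\tfrac12\gamma)\cap\Delta$ using the oddness of $\gamma(X_f)$, and transfer everything through the half-form Guillemin--Sternberg isomorphism, which the paper likewise does not reprove but cites (section 8.3 of Charles's polygon-spaces paper). The only cosmetic difference is that the paper lifts the action of the whole torus $\T^F$ to $\delta_F$ via an integral twist $d$ with $\pi^*\gamma=\mathbb{I}+2d$ and then restricts to $N$, rather than defining a half-integral character on $N$ alone as you do.
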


Observe that the points of $(\frac{2\pi}{k}( \mathfrak{t}_{\Z}^* +
\frac{1}{2} \ga )) \cap \Delta$ are all in the interior of
$\Delta$. Furthermore on a neighborhood of each vertex we recover the
usual joint spectrum of $n$ harmonic oscillators as follows. For any
vertex $v$ denote by $F_v$ the set of one-codimensional faces adjacent
to $v$ so that $(f_v, \; v \in F_v)$ is a basis of $t_{\Z}^*$. Then
there exists a neighborhood $U$ of $v$ such that
$$\Delta \cap U = \{ v + x / \langle f, x \rangle \geqslant 0, \forall f \in F_v \} \cap U $$ and 
$$ (\tfrac{2\pi}{k}( \mathfrak{t}_{\Z}^* + \tfrac{1}{2} \ga )) \cap \Delta \cap U = \{ v + x / \langle f, x \rangle \in \tfrac{2 \pi}{k} ( \N + \tfrac{1}{2} ) , \forall f \in F_v \} \cap U .$$

\begin{proof}
  Let us adapt the proof of Theorem \ref{theo:tt}.  First we introduce
  the quantization of $\C^F$ with metaplectic correction.  Choose an
  ordering of $F$ and define $v \in \Om ( \C^F) $ as the wedge product
  of the $\op{d}\!z_f$ 's. $v$ is a non vanishing section of the
  canonical bundle of $\C^F$. The action of $t \in {\mathbb{R}}^F /
  {\mathbb{Z}}^F$ on the canonical bundle sends $ v$ into $ \exp ( -
  2{\rm i} \pi \sum t_f) v$.

  Let $\delta_F $ be the trivial complex line bundle with base $\C^F$
  and $\varphi_F $ be the isomorphism from $\delta_F^2$ to the
  canonical bundle of $\mathbb{C}^F$ given by $\varphi(z,1)= v(z)
  $. By Proposition \ref{prop:existence-half-form}, there exist $d \in
  \Z^F$ and $\ga \in \mathfrak{t}_{\Z}^*$ such that $$\pi^* \ga =
  \mathbb{I} + 2 d$$ where $\mathbb{I}$ is the vector in
  $(\mathbb{R}^F)^*$ with all components equal to $1$. Consider the
  action of ${\mathbb{R}}^F / {\mathbb{Z}}^F$ on $\delta_F$ given by
$$ t \cdot  (z, u) = (t \cdot z , e^{ 2{\rm i} \pi \sum t_f d_f } u)$$
Then the isomorphism $\varphi$ intertwines the action of the subtorus
$N$ of ${\mathbb{R}}^F / {\mathbb{Z}}^F$ on $\delta_F$ with the action
of $N$ on the canonical bundle. This condition has the consequence
that the quotient of $\delta_F^2$ by $N$ defines a half-form bundle
$\delta_\Delta$ on the Delzant space $M_{\Delta}$, cf. section 8.3 of
\cite{Ch2010}. Furthermore one has an isomorphism
$$ \Phi_k : (\mathcal{B}_{\op{m}, k} )^N  \rightarrow  \Hilb_{\op{m}, k}^{ \Delta}$$
from the $N$-invariant part of the space $\mathcal{B}_{\op{m}, k}$ of
holomorphic sections of $\mathcal{L}^k_F \otimes \delta_F$ to the
quantum space $\Hilb_{\op{m}, k}^{ \Delta}$.

Introduce the rescaled Kostant-Souriau operators
\[
S_{X,k}: = \pscal{\tilde{\mu}}{X} + \frac{1}{{\rm i}k}
(\nabla^{\mathcal{L}^k}_{X^{\sharp}} \otimes \op{Id}+ \op{Id} \otimes
\op{L}^{\delta}_{X^{\sharp}}).
\]
Then $ (\mathcal{B}_{\op{m}, k} )^N$ consists of the sections $\Psi
\in \mathcal{B}_{\op{m}, k}$ such that $S_{X,k} \Psi = 0 $ for any $X
\in \mathfrak{n}$. Furthermore
$$ \Phi_k \Bigl(  S_{X,k} \Psi \Bigr) = T_{\pi(X),k} \Phi_k ( \Psi), \qquad \textup{for all}\,\,\,\, \Psi \in (\mathcal{B}_{\op{m}, k})^N  .$$ 
To conclude the proof let us compute the action of the rescaled
Kostant-Souriau operators. We have for $X=e_f$
$$S_{X,k} \bigl( {\rm e}^{-\frac{k}{8 \pi }\abs{z}^2} g(z)\bigr) =
{\rm e}^{-\frac{k}{8 \pi }\abs{z}^2} \Bigl( \frac{ 2 \pi}{k} \bigl(
z_f\partial_{z_f}g + \tfrac{1}{2} g \bigr) - \lambda_f g \Bigr).$$ For
any $\alpha \in \mathbb{N}^F$ set $ \psi_\alpha =
e^{-\frac{k}{2}\abs{z}^2}z^\alpha$ so that
\begin{eqnarray} \label{eq:tx2} S_{X,k}(\psi_\alpha)
  =\pscal{X}{\frac{2 \pi }{k} (\al + \tfrac{1}{2} \mathbb{I} )
    -\lambda } \psi_\alpha.
\end{eqnarray}
Hence the space $(\mathcal{B}_{\op{m}, k} )^N$ admits as a basis the
family $( \Psi_{\al})$ where $\al$ runs over $ ( \Z^F + \frac{1}{2}
\mathbb{I}) \cap \ker \iota_{\mathfrak{n}}^*$. To conclude we prove as
in Lemma \ref{lemm:bij} that $\pi ^* + \la$ restricts to a bijection
$$\frac{ 2 \pi }{k} ( \mathfrak{t}_{\Z}^* + \frac{1}{2} \ga) \longrightarrow 
\frac{ 2 \pi }{k} ( \Z^F + \frac{1}{2} \mathbb{I}) \cap \ker
\iota_{\mathfrak n } ^*. $$
\end{proof}

 \begin{figure}[h]
   \centering
   \includegraphics[height=2.5cm]{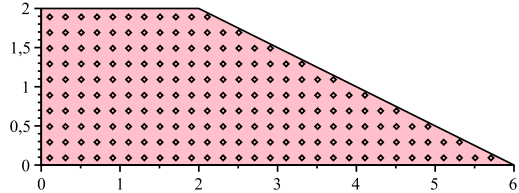}
   
   \caption{Spectra in Figure \ref{fig:deformed} with metaplectic correction.}
   \label{fig:deformed_metaplectic}
 \end{figure}

Let us consider now the generalization of Theorem \ref{theo:normal-form}. 
 Let $(M, \, \omega, \,\R^n/ \Z^n, \, \mu )$ be a symplectic toric
  manifold equipped with a prequantum bundle ${\mathcal{L}}$, a
  compatible complex structure $j$ and a half-form bundle $\delta$.
 Denote by $\Delta$ the momentum polytope $\mu (M) \subset \R^n$. Let $T_1,\dots, T_n$ be commuting self-adjoint Toeplitz operators of $\Hilb_{\op{m},k}$  whose
  principal symbols are the components of $\mu$. Denote by $f^i_1$ the subprincipal symbol of $T_i$. 

\begin{theorem}
  There exists $k_0>0$, there exists a sequence
  $(g(\cdot;k))_{k\geq k_0}$ of smooth maps $\mathbb{R}^n \to
  \mathbb{R}^n$, and there exists an operator
  $U=(U_k:\mathcal{H}_k\to\mathcal{H}_k^\Delta)_{k\geq k_0}$ with
  $U_k$ unitary for any $k$, such that
$$
U_k(T_{1,k},\,\ldots,T_{n,k})U_k^{-1}=g(T_{1,k}^{\Delta},\, \ldots,
T^{\Delta}_{n,k}; k) + \mathcal{O}(k^{-\infty}).
$$
Moreover, $g$ admits an asymptotic expansion in the $\Cinf$ topology
of the form $\textup{Id}+k^{-1}g_1+k^{-2}g_2+\cdots$ 
where the subprincipal term $g_1$ is given by 
$$ g^i_1 (E) = \int^{0}_{1} f^i_1 (\varphi^t_{\mu_i} (x)) \, {\rm d}t , \qquad \textup{for all}\,\,\, \, E \in \Delta, \, x \in \mu^{-1}(E) .$$
Here $\varphi_{\mu_i}^t$ is the Hamiltonian flow of $\mu_i$. 
\end{theorem}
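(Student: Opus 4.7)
The plan is to adapt the proof of Theorem \ref{theo:normal-form} almost verbatim to the half-form setting, and then to track the subprincipal symbol through the first step of the induction to pin down the explicit formula for $g_1$.

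First I would use Delzant's theorem to produce a symplectomorphism $\phi : M \to M_{\Delta}$ with $\phi^{*}\mu_{\Delta}=\mu$. Since $M$ and $M_{\Delta}$ are simply connected, $\phi$ lifts to an isomorphism of prequantum bundles, and (the existence of $\delta$ being a topological condition that depends only on $\Delta$, cf.\ Proposition \ref{prop:existence-half-form}) to an isomorphism of half-form bundles. As in Step 1 of the proof of Theorem \ref{theo:normal-form}, $\phi$ can then be quantized as a unitary intertwiner $U_{0}:\Hilb_{\op{m},k}\to\Hilb_{\op{m},k}^{\Delta}$, extended to the half-form factor by the Fourier-integral-operator construction of \cite{Ch2003a}. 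Conjugation by $U_{0}$ reduces the problem to the case of commuting self-adjoint Toeplitz operators on $\Hilb_{\op{m},k}^{\Delta}$ with principal symbols the components of $\mu_{\Delta}$ and subprincipal symbols $\tilde{f}_{1}^{i}:=f_{1}^{i}\circ\phi^{-1}$, since the (suitably normalised) subprincipal symbol is a geometric invariant that pulls back under such intertwiners.

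Next I would carry out the inductive conjugation of Steps 2--4 of the proof of Theorem \ref{theo:normal-form} with $\Hilb_{\op{m},k}$ replacing $\Hilb_{k}$ and Theorem \ref{theo:ttm} replacing Theorem \ref{theo:tt}. At each order $N$ one conjugates by $I+{\rm i}k^{-N}A_{N}$ (or its exponential, to preserve unitarity in the self-adjoint case) and solves the cohomological equation
\[
\{\mu_{j}^{\Delta},\,a_{N}\}\;=\;r_{N+1}^{j}-g_{N+1}^{j}\circ\mu_{\Delta},\qquad j=1,\dots,n,
\]
using the averaging operators $M_{j}r=\int_{0}^{1}r\circ\phi_{\mu_{j}^{\Delta}}^{t}\,{\rm d}t$, $P_{j}r=\int_{0}^{1}t\,r\circ\phi_{\mu_{j}^{\Delta}}^{t}\,{\rm d}t$ together with Lemmas \ref{lemma:M} and \ref{lemma:g}. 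A Borel summation then produces a unitary $U$ and a smooth map $g(\cdot;k)$ with the required asymptotic expansion.

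To identify $g_{1}$ I would examine the step $N=0$ in detail. The key input here is the remark preceding Theorem \ref{theo:ttm}: the metaplectic Kostant--Souriau operators $T_{i,k}^{\Delta}$ have principal symbol $\mu_{i}^{\Delta}$ and \emph{vanishing} subprincipal symbol (this is precisely the advantage of the metaplectic correction and replaces the $-\tfrac{1}{2}\Delta f$ which would otherwise appear through Tuynman's trick). Hence $U_{0}T_{i}U_{0}^{-1}-T_{i,k}^{\Delta}$ is $k^{-1}$ times a Toeplitz operator whose principal symbol is exactly $\tilde{f}_{1}^{i}$. Setting $r_{1}^{i}=\tilde{f}_{1}^{i}$, the recipe of Step 3 gives $g_{1}^{i}\circ\mu_{\Delta}=M_{i}\tilde{f}_{1}^{i}$. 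Using that $\phi$ intertwines the Hamiltonian flows, $\phi\circ\phi_{\mu_{i}}^{t}=\phi_{\mu_{i}^{\Delta}}^{t}\circ\phi$, we obtain for any $E\in\Delta$ and any $x\in\mu^{-1}(E)$ the stated identity
\[
g_{1}^{i}(E)\;=\;\int_{0}^{1}f_{1}^{i}\bigl(\phi_{\mu_{i}}^{t}(x)\bigr)\,{\rm d}t.
\]

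The main obstacle is the bookkeeping of the subprincipal symbol through two distinct operations: the quantization $U_{0}$ of the symplectomorphism (with both prequantum and half-form lifts), and the composition formula for Toeplitz operators with metaplectic correction. The former requires knowing that the FIO-type operators of \cite{Ch2003a} preserve subprincipal symbols in the metaplectic setting; the latter is the composition rule recalled just before Theorem \ref{theo:ttm}. Once these are in place, the averaging computation above is straightforward and the rest of the proof is a line-by-line transcription of the proof of Theorem \ref{theo:normal-form}.
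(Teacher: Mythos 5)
Your proposal is correct and takes essentially the same route as the paper: reduce to the model by a quantized symplectomorphism chosen so that conjugation preserves both principal and subprincipal symbols (the paper cites Theorem 5.1 of \cite{Ch2007} for this point, rather than \cite{Ch2003a}), then run the inductive conjugation of Theorem \ref{theo:normal-form}. The identification of $g_1$ is exactly the paper's: since the metaplectic Kostant--Souriau model operators have vanishing subprincipal symbol, the first cohomological equation reads $\{\mu_i,a\}=f_1^i-g_1^i(\mu_\Delta)$, and averaging along the flow gives the stated formula.
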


\begin{proof} 
  The proof is the same as the one of Theorem \ref{theo:normal-form}. Because of the metaplectic correction, we can choose in the first step the operator $U_k$ quantizing $\varphi$ in such a way that for any Toeplitz operator $(S_k)$, $(S_k)$ and $(U_k S_k U_k^*)$ have the same principal and subprincipal symbols. This follows from Theorem 5.1 in \cite{Ch2007}. From this we can extract $g_1$ from the cohomological equation $\{ \mu_i, a \}= f^i_1 - g_1^i(\mu_\Delta) $.  
\end{proof}

Finally we deduce Theorem \ref{theo:metapl-corr} in the introduction by following the same method as in Section \ref{sec:inverse}.

\section{Final Remarks} \label{sec:remarks}

 In the present
paper we have dealt with isospectrality in the context of integrable
systems and symplectic geometry. The paper settles the \emph{Spectral Goal
  for Quantum Systems} for the case of toric systems outlined in the
last two authors' article \cite{PeVN2013}: to prove that large classes
of integrable systems are determined by their semiclassical joint
spectrum. 

This type of inverse question fits in the framework of
``isospectral questions": what is the relation between two operators
that have the same spectrum? The question of isospectrality has been
considered by many authors in different contexts, and may be traced back
to a more general question of by H. Weyl \cite{We1911,We1912}. 
The question is perhaps most
famous thanks to Kac's article \cite{Ka66} (who attributes the question to
S. Bochner), which also popularized the
phrase: ``can one hear the shape of a drum?".

\subsection*{Isospectrality in geometry} \label{sec:remarks1}

Corollary
  \ref{theo:inverse-spectral} says that the joint spectrum does indeed
  determine the system. This type of conclusion often has a negative
  answer, at least if one considers it in Riemannian geometry. In
  Riemannian geometry the operator whose spectrum is considered is the
  Laplace operator.  Two compact Riemannian manifolds are said to be
  \emph{isospectral} if the associated Laplace operators have the same
  spectrum.

 Bochner and Kac's question has a negative answer in this case, even for planar
  domains with Dirichlet boundary conditions (which is the original
  version posed in \cite{Ka66}).    There are many works in this
  direction, see for instance Milnor \cite{Mi1964}, Sunada \cite{Su85}, Berard \cite{Be92},
  and Buser \cite{Bu86} and Gordon\--Webb\--Wolpert \cite{GoWeWo92,
    GoWeWo92b}. As we have mentioned, in symplectic geometry a few
  positive results are known.  These results, and the present paper,
  give evidence that symplectic invariants seem to be more encodable
  in the spectrum of a quantum integrable system than Riemannian
  invariants in the spectrum of the Lapace operator.

Inverse type results in the
realm of spectral geometry have
been obtained by many other authors, see for instance
 Br{\"u}ning\--Heintze \cite{Br1984b},   Colin de Verdi{\`e}re \cite{CdV, CdV2, CdV3}, 
  Colin de Verdi{\`e}re\--Guillemin \cite{CdV4}, 
  Croke\--Sharafutdinov \cite{CrSh1998},
  Guillemin\--Kazhdan \cite{GuKa1980},   
  McKean\--Singer \cite{McSi1967},  
   Osgood\--Phillips\--Sarnak \cite{OsPhSa1989}, and Zelditch \cite{Ze2009},
   and the references therein.  
  An interesting general problem (for instance in the context of toric geometry) is to what extent information about
measures may be recovered from the spectrum, see Guillemin\--Sternberg \cite[p. 72\--78]{GuSt1984}
for a result in this direction.

\subsection*{Isospectral conjecture for semitoric systems}

In \cite{PeVN2010, PeVN2012} the last two authors formulated an
inverse spectral conjecture for semitoric completely integrable
systems (see \cite{PeVN2009,PeVN2011} for a classification of
semitoric systems in terms of five symplectic invariants): the
semiclassical joint spectrum of a quantum semitoric system determines
the corresponding classical system.

 \begin{figure}[h]
   \centering
   \includegraphics[width=0.4\linewidth]{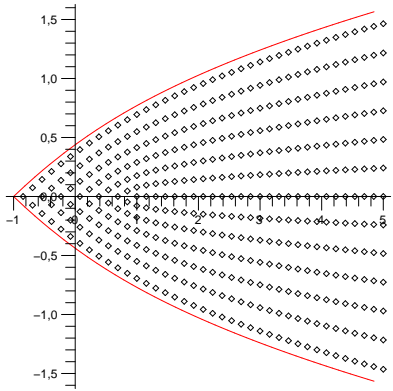}
   \caption{The quantum coupled
     spin\--oscillator is a fundamental example of quantum semitoric integrable system. 
     Its joint spectrum is depicted in the figure for a fixed value of the spectral parameter.}
   \label{fig:spectrumapprox2versionsmall}
 \end{figure}

\emph{Semitoric systems} are four-dimensional integrable systems with
two degrees of freedom for which one component of the system generates
a $2\pi$-periodic flow.  Semitoric systems lie somewhere in between
toric systems and general integrable systems. If both components of
the semitoric system are $2\pi$-periodic, i.e. the system is generated
by a Hamiltonian $2$-torus action, then the system is a \emph{toric
  system} (strictly speaking after a harmless rescaling of the
periods).

Theorem~\ref{theo:inverse-spectral} above solves the conjecture in the
class of toric systems. In this class the result is even stronger,
since there is no restriction on the dimension, and moreover only the
spectrum modulo $\mathcal{O}(\hbar)$ is needed, whereas in general one
expects that an accuracy of order $\mathcal{O}(\hbar^2)$ is necessary.
\\
\\
\\
{\bf Acknowledgements.}  We thank Jochen Br{\"u}ning, 
Helmut Hofer, Peter Sarnak, and
Thomas Spencer for fruitful discussions.  
The authors are grateful to Helmut Hofer for his essential support, which
made it possible for LC and VNS to visit AP at the Institute for
Advanced Study during the Winter and Summer of 2011, where a part of
this paper was written. Additional financial support for the visits
was provided by Washington University and NSF.

AP was partly supported by an NSF Postdoctoral Fellowship, a MSRI
membership, an IAS membership, NSF Grants DMS-0965738 and DMS-0635607, an NSF CAREER
Award, a Leibniz Fellowship from the Mathematisches Forschungsinstitut
Oberwolfach, Spanish Ministry of Science Grant MTM 2010-21186-C02-01,
and by the Spanish National Research Council (CSIC).  
VNS was partly supported by the NONAa grant from the French ANR and 
the Institut Universitaire de France.

\bibliographystyle{new}
  \addcontentsline{toc}{section}{References}

\medskip\noindent
Laurent Charles\\
Institut de Math{\'e}matiques de Jussieu\\
Universit{\'e} Pierre et Marie Curie (Paris VI), Case 247\\
4, place Jussieu \\
F-75252 PARIS CEDEX 05.

\noindent
\\
{\'A}lvaro Pelayo \\
School of Mathematics\\
Institute for Advanced Study\\
Einstein Drive\\
Princeton, NJ 08540 USA.
\\
\\
and
\\
\\
\noindent
Washington University,  Mathematics Department \\
One Brookings Drive, Campus Box 1146\\
St Louis, MO 63130-4899, USA.\\
{\em E\--mail}: \texttt{apelayo@math.wustl.edu}

\medskip\noindent

\noindent
\noindent
San V\~u Ng\d oc \\
Institut Universitaire de France
\\
\\
Institut de Recherches Math\'ematiques de Rennes\\
Universit\'e de Rennes 1, Campus de Beaulieu\\
F-35042 Rennes cedex, France\\
{\em E-mail:} \texttt{san.vu-ngoc@univ-rennes1.fr}

\end{document}